\theoremstyle{plain}
\newtheorem{theorem}{Theorem}[section]
\newtheorem{lemma}[theorem]{Lemma}
\newtheorem{proposition}[theorem]{Proposition}
\theoremstyle{definition}
\newtheorem{definition}[theorem]{Definition}
\newtheorem{remark}{Remark}
\newcommand{\R}{\mathbb{R}}
\title{A Wolbachia infection model with free boundary}
\author{Yunfeng  Liu\thanks{School of Mathematics and Information Sciences,  Guangzhou University, Guangzhou, 510006, PR China}, Zhiming Guo\thanks{Z. Guo. Email: \href{guozm@gzhu.edu.cn}{guozm@gzhu.edu.cn}. School of Mathematics and Information Sciences,  Guangzhou University, Guangzhou, 510006, PR China},
 Mohammad El Smaily\thanks{M. El Smaily. Email: \href{mohammad.elsmaily@unbc.ca}{mohammad.elsmaily@unbc.ca}. Department of Mathematics \& Statistics,  University of Northern British Columbia, Prince George, BC, V2N 4Z9, Canada}~ and Lin Wang\thanks{L. Wang. Email: \href{lwang2@unb.ca}{lwang2@unb.ca}. Department of Mathematics \& Statistics, University of New Brunswick, Fredericton, NB, E3B5A3, Canada.}
}
\date{October 2019}
\begin{document}

\maketitle

\begin{abstract} Scientists have been seeking ways  to use {\it Wolbachia} to eliminate the mosquitoes that spread human diseases. Could {\it Wolbachia} be the determining factor in controlling the mosquito-borne infectious diseases?  To answer this question mathematically,  we develop a reaction-diffusion model with free boundary in a one-dimensional environment. We divide the female mosquito population into two groups: one is the uninfected mosquito population that grows in the whole region while the other is the mosquito population infected with {\it Wolbachia} that occupies a finite small region and  invades the environment with a spreading front governed by a free boundary satisfying the well-known one-phase Stefan condition.
For the resulting free boundary problem, we establish  criteria under which spreading and vanishing occur. Our results provide useful insights on designing a feasible mosquito releasing strategy to invade the whole mosquito population with {\it Wolbachia} infection and thus eventually eradicate the mosquito-borne diseases.
\end{abstract}

{\bf Keywords.} {\it Wolbachia} infection; reaction-diffusion systems; free boundary; spreading-vanishing dichotomy.

\section{Introduction}\label{intro}%1
Recently, several public health projects  have been launched, in China \cite{Xinhua}, USA \cite{Google} and France \cite{Naturenews},  with an aim to fight mosquito populations that transmit Zika virus, Dengue fever and Chikungunya. All of these projects  involve the release of male {\it Aedes aegypti} mosquitoes infected with the {\it Wolbachia} bacteria to the wild. For instance, 20000 male {\it Aedes aegypti mosquitoes} carrying {\it Wolbachia} bacteria were released on Stock Island of the Florida Keys in the week of April 20, 2017.  Google's Verily is about to release 20 million machine-reared {\it Wolbachia}-infected mosquitoes  in  Fresno (see  \cite{Google}). A factory in Southern China is manufacturing millions of ``mosquito warriors" (male {\it Aedes aegypti} mosquitoes carrying {\it Wolbachia} bacteria) to combat epidemics transmitted by mosquitoes  \cite{Xinhua}.

The science behind these projects is based on the following two facts: (i) {\it Wolbachia} often
induces cytoplasmic incompatibility (CI) which leads to early embryonic death when
{\it Wolbachia}-infected males mate with uninfected females and (ii) {\it Wolbachia}-infected females produce viable embryos after mating with either infected or
uninfected males, resulting in a reproductive advantage over uninfected females. In practice, {\it Wolbachia} has been successfully transferred into {\it Aedes aegypti} or {\it Aedes albopictus} by embryonic microinjections, and the injected infection has been stably maintained with complete CI and nearly perfect maternal transmission  \cite{Wolbachia1,Wolbachia2,Wolbachia3,Wolbachia4,Wolbachia5,Wolbachia7,Wolbachia6,Wolbachia8}. Thus, the bacterium is expected to invade host population easily driving the host population to decline. Successful {\it Wolbachia} invasion in {\it Aedes aegypti} has been observed by Xi et al. in the laboratory caged population within seven generations  \cite{Wolbachia9}.

By releasing  {\it Aedes albopictus} mosquitoes infected with {\it Wolbachia} bacteria into the wild, it is expected that over a long time period, the wild {\it Aedes aegypti} mosquito population would decline drastically and hopefully be completely replaced by infected mosquitoes so that the mosquito-borne infectious diseases such as Zika, Dengue fever and Chikungunya would be eradicated. To qualitatively examine if {\it Wolbachia} can effectively invade the wild uninfected mosquito population, Zheng, Tang and Yu   \cite{BOZHENG} considered the following model:
\begin{equation}\label{eq zhengbotang}
\left\{\begin{array}{ll}
\displaystyle{\frac{{\rm d} u}{d t}}= u[b_{1}-\delta_{1}(u+v)]&\hbox{for }~t>0,\vspace{7 pt} \\
\displaystyle{\frac{{\rm d} v}{d t}}=v\left[\displaystyle{\frac{b_{2}v}{u+v}}-\delta_{2}(u+v)\right]&\hbox{for }~t>0,
\end{array}\right.
\end{equation}
where $u$ denotes the number of reproductive infected insects and $v$ denotes uninfected ones, $b_{1}$ and $b_{2}$ denote half of the constant birth rates for the infected and uninfected insects respectively. The parameter $\delta_{1}$ (resp. $\delta_{2}$) denotes the  density-dependent death rate for the infected (resp. uninfected) population. The birth rate of uninfected mosquitoes is diminished by the factor $\frac{v}{u+v}$ due to the sterility caused by cytoplasmic incompatibility (CI) for mating between infected males and uninfected females.

Let us now recall the origin of system \eqref{eq zhengbotang} with some details. Let $r_{f}$ and $r_{m}$ denote the number of released female mosquitoes and the number of released males respectively and suppose they were infected with  {\it Wolbachia}. Also, assume  that $r_{f}$ and $r_{m}$ satisfy
\begin{equation}\label{eqintroduce1}
\left\{\begin{array}{ll}
\displaystyle{\frac{{\rm d} r_{f}}{d t}= -\delta_{1}r_{f} T(t)},&t>0,\vspace{7 pt} \\
\displaystyle{\frac{{\rm d} r_{m}}{d t}= -\delta_{1}r_{m} T(t)},&t>0,
\end{array}\right.
\end{equation}
where $$T(t)=r_{f}+r_{m}+I_{f}+I_{m}+U_{f}+U_{m}$$ denotes the total population size, with $U_{f},$  $U_{m}$, $I_{f}$ and $I_{m}$ standing for  the numbers of uninfected reproductive females, uninfected reproductive males, and infected reproductive females and males other than those from releasing, respectively. Let $b_{I}$ (resp. $b_{U}$) be the natural birth rate of the infected (resp. uninfected) mosquitos and $0\leq\delta\leq 1$ be the proportion of mosquitos born female. Then the proportion of mosquitos born male is $1-\delta$. With complete CI (see Table \ref{liuhua}) and perfect maternal transmission, we have

\begin{equation}\label{eqintroduce2}
\left\{\begin{array}{ll}
\displaystyle{\frac{{\rm d} I_{f}}{d t}= \delta b_{I}[I_{f}+r_{f}]-\delta_{1}I_{f} T(t),}&t>0,\vspace{7 pt} \\
\displaystyle{\frac{{\rm d}I_{m}}{d t}= (1-\delta)b_{I}[I_{f}+r_{m}]-\delta_{1}I_{m} T(t),}&t>0,\vspace{7 pt}\\
\displaystyle{\frac{{\rm d} U_{f}}{d t}= \delta b_{U}\left[U_{f}\,\frac{U_{m}}{r_{m}+I_{m}+U_{m}}\right]-\delta_{2}U_{f} T(t),}&t>0, \vspace{7 pt}\\
\displaystyle{\frac{{\rm d} U_{m}}{d t}= (1-\delta)b_{U}\left[U_{f}\frac{U_{m}}{r_{m}+I_{m}+U_{m}}\right]-\delta_{2}U_{m} T(t),}&t>0.
\end{array}\right.
\end{equation}

\begin{table}[!hbp]\label{liuhua}
\centering
\begin{tabular}{|c|c|c|}
\hline
 mate& $U_{m}$ & $I_{m}$  \\
\hline
$U_{f}$ & $U_{f}~\hbox{or}~U_{m}$ & $\times$   \\
\hline
$I_{f}$&  $I_{f}~\hbox{or}~I_{m}$ &$I_{f}~\hbox{or}~I_{m}$\\
\hline
\end{tabular}
\caption{Strong CI, $\times$ means  ``no offspring''}
\end{table}

\noindent One can easily verify that both $r_{f}$ and $r_{m}$ approach $0$ as $t\rightarrow+\infty$. We denote by
\begin{equation}\label{eqintroduce3}
u(t)=I_{f}+I_{m} ~\hbox{ and }~v(t)=U_{f}+U_{m}.
\end{equation}
Assuming  equal determination case, which means that $\delta={1}/{2}$, $I_{f}=I_{m}$ and $U_{f}=U_{m},$ then system \eqref{eq zhengbotang} can be obtained by setting $b_{1}={ b_{I}}/{2}$ and $b_{2}={ b_{U}}/{2}$.
In order to obtain the spatiotemporal dynamics of \eqref{eq zhengbotang}, Huang et al. \cite{huangmugen1,huangmugen2} studied the following reaction-diffusion system:
\begin{equation}\label{Eq_Huang}
\left\{\begin{array}{ll}
\displaystyle{\frac{\partial u}{\partial t}= d_{1}\Delta u+u(b_{1}-\delta_{1}(u+v)),}&t>0,~~ x\in \Omega, \vspace{7 pt}\\
\displaystyle{\frac{\partial v}{\partial t}=d_{2}\Delta v+v\left(\frac{b_{2}v}{u+v}-\delta_{2}(u+v)\right),}&t>0, ~~ x\in \Omega,\vspace{7 pt}\\
\displaystyle{\frac{\partial u}{\partial\nu}=\frac{\partial v}{\partial\nu}=0,} &t>0,~~  x\in\partial\Omega,\vspace{7 pt}\\
u(0,x)=u_{0}(x), ~~ v(0,x)=v_{0}(x), &x\in \Omega.
\end{array}\right.
\end{equation}
In \eqref{Eq_Huang}, $d_{1}$ and $d_{2}$ are the diffusion rates, $\Delta$ denotes the Laplace operator in the spatial variable $x$, and $\nu$ denotes the unit outward normal vector to the boundary of $\Omega$. We mention that \eqref{Eq_Huang} is obtained from a delay differential equation model in  \cite{BOZHENG} after ignoring the delay factor and incorporating the spatial inhomogeneity. Similarly, there has been several mathematical models formulated to describe the {\it Wolbachia} spreading dynamics  \cite{huangmugen2018JTB,yujianshe2018siam,yujianshe-zhengbo2019,BOZHENG-JBD}. These models focused on studying the subtle relation between the threshold releasing level for {\it Wolbachia}-infected mosquitoes and several important parameters including the CI intensity and the fecundity cost of {\it Wolbachia} infection.

We also note that female Aedes aegypti mosquitoes infected with the {\it Wolbachia} bacteria were initially released at a specific site. Hence, the infected female mosquitoes initially occupy only a small region, while the wild uninfected females are distributed over the whole area.

To model the spatial spreading of {\it Wolbachia} in the wild mosquito population and explore the possibility that the infection can indeed occupy the whole region, it is natural to consider system \eqref{Eq_Huang} under the setting of a free boundary problem.

In this work, we consider
the following free boundary problem in one-dimensional space:
\begin{equation}\label{Eq_ourmodel-two}
\left\{\begin{array}{ll}
\displaystyle{\frac{\partial u}{\partial t}= d_{1}u_{xx}+u(b_{1}(x)-\delta_{1}(u+v))},&t>0, ~~ 0<x<h(t), \vspace{7 pt}\\
\displaystyle{\frac{\partial v}{\partial t}=d_{2}v_{xx}+v\left(\frac{b_{2}(x)v}{u+v}-\delta_{2}(u+v)\right),}&t>0,~~ x>0,\vspace{7 pt}
\\
\displaystyle{u_{x}(t,0)=v_{x}(t,0)=0, u(t,h(t))=0,}&t>0,\vspace{7 pt}\\
\displaystyle{h'(t)=-\mu u_{x}(t,h(t)),}&t>0, \vspace{7 pt}\\
 h(0)=h_{0},\vspace{7 pt}\\
u(0,x)=u_{0}(x),& x\in[0,h_{0}],\vspace{7 pt}\\
  v(0,x)=v_{0}(x),& x\in[0,+\infty).
\end{array}\right.
\end{equation}
The equation governing the movement of the spreading front $x=h(t)$ is deduced in a manner similar to that in Section 1.3 of~ \cite{Du2012}. It is  known as the one-phase Stefan condition in the literature. This type of free boundary condition has been widely used in previous works such as \cite{caolizhao2017,dingwwduyihong2017,dingwwpengrui2017,duyihongshou,lifangDPP2016,Lin2007A,Linandzhu2016,wuchanghong2016,ShiguiRuan,wangmingxinP-P2014,weilzhoumaolin2016}.

 We shall consider system \eqref{Eq_ourmodel-two} with constant birth rates $b_1$ and $b_2$ in Section \ref{specific model} and consider space-dependent birth rates $b_1(x)$ and $b_2(x)$ in Section \ref{Section4}, while the natural death rate is assumed to be spatially independent.

Throughout this paper, we assume that $b_{1}(x)$ and $b_{2}(x)$ satisfy the following conditions, unless otherwise stated:
\begin{equation}
\tag{\bf $B_{1}$}\label{B2}
\left\| \begin{array}{l}\exists \,\theta\in(0,1)\text{ such that } b_{i}\in C^{0,\theta}\left([0,+\infty)\right)\cap L^{\infty}\left([0,+\infty)\right), \\ \\
 b_{i}\geq0,~i=1,2.\end{array}\right.
\end{equation}
 $C^{0,\theta}\left([0,+\infty)\right)$ is the H\"{o}lder space with H\"{o}lder exponent $\theta.$ The initial conditions $u_{0}$ and $v_{0}$ are assumed to be bounded and satisfy
\begin{equation}\label{eq initial functions}
\left\{\begin{array}{l}
u_{0}\in C^{2}([0,h_{0}]),\vspace{7 pt}\\
u'_{0}(0)=u_{0}(h_{0})=0,\vspace{7 pt}\\

u_{0}(x)>0 \text{ for all }x\in (0,h_{0}),\vspace{7 pt}\\

v_{0}\in C^{2}[0,\infty) \cap {L}^{\infty}[0,\infty)~\text{ and }~v_{0}>0.
\end{array}\right.
\end{equation}

For the free boundary problem \eqref{Eq_ourmodel-two}-\eqref{eq initial functions}, the main question we are concerned about is whether the infected population
can eventually occupy the whole space or not.

\begin{definition}[The notion of ‘vanishing’ and ‘spreading’] If the infected population eventually occupies the whole space, i.e.  $$\displaystyle{\lim_{t\to\infty}h(t)=+\infty},$$ we say  {\em spreading} occurs; otherwise, we say {\em vanishing} occurs.
\end{definition}
\noindent The main goal of this work is to derive conditions under which the spreading occurs. If spreading occurs, then  the whole mosquito population will become infected with {\it Wolbachia} bacteria and this leads to the {\em extinction} of the mosquito population and eventually the eradication of  mosquito-borne diseases.

\paragraph*{Organization of the paper.}
The paper is organized as follows. We first establish the global existence and uniqueness of solutions to the free boundary problem \eqref{Eq_ourmodel-two} in Section \ref{existence}. In Section \ref{specific model}, we present a detailed analysis of a specific case of model \eqref{Eq_ourmodel-two}. In Section \ref{Section4}, we study the population dynamics of infected  mosquitoes in a heterogeneous environment with a free boundary condition. In order to better understand the effects of dispersal and spatial variations on the outcome of the competition, we study  system  \eqref{Eq_ourmodel-two} over a bounded domain with Neumann boundary conditions. We summarize our results in the last section.

\section{Global existence of smooth solutions}\label{existence}
Using  arguments that are similar to those in  \cite{dusupandinf}, we can establish the following result concerning the existence and uniqueness of solutions to system \eqref{Eq_ourmodel-two}-\eqref{eq initial functions}.
\begin{theorem}[Local existence]\label{Thm_existence}
Consider system \eqref{Eq_ourmodel-two} with initial conditions \eqref{eq initial functions}. Assume that $b_1$ and $b_2$  satisfy \eqref{B2}. Then, there exists  $T>0$ such that \eqref{Eq_ourmodel-two} admits a unique solution $(u, v, h(t))$ satisfying
\begin{enumerate}[(i)]
  \item $(u,v,h) \in C^{\frac{(1+\theta)}{2},1+\theta}(Q)\times C^{\frac{(1+\theta)}{2},1+\theta}(Q^\infty)\times C^{1+\frac{\theta}{2}}([0,T])$,\\
  \item $\|u\|_{C^{\frac{(1+\theta)}{2},1+\theta}(Q)}+\| v\|_{C^{\frac{(1+\theta)}{2},1+\theta}(Q^\infty)}+\|h\|_{C^{1+\frac{\theta}{2}}([0,T])}\leq K$,
 \end{enumerate}
where $0<\theta<1$ is the H\"{o}lder exponent in \eqref{B2},
\[\begin{array}{c}
Q=\{(t,x)\in \mathbb{R}^{2},\text{ such that }t\in[0,T] \text{ and }x\in[0,h(t)]\},\\
Q^\infty=\{(t,x)\in \mathbb{R}^{2}, \text{ such that } t\in[0,T] \text{ and }x\in[0,+\infty)\},\end{array}\]
 $K$ and $T$ are constants that depend only on $h_{0}$, $\theta$, $\| u_{0}\|_{C^{2}([0,h_{0}])}$ and $\|v_{0}\|_{C^{2}([0,+\infty))}$.
\end{theorem}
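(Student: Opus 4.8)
The plan is to follow the now-standard strategy for free boundary problems of Stefan type, originating in Du--Lin \cite{Lin2007A} and refined in Du--Guo \cite{Du2012} and Du--Lou, and adapted to systems (cf. \cite{dusupandinf}). The first step is to straighten the free boundary: for a fixed but small $T>0$ and a given candidate $h\in C^{1}([0,T])$ with $h(0)=h_{0}$, $h'(0)=-\mu u_{0}'(h_{0})$ and $\|h-h_{0}\|_{C^{1}([0,T])}$ small, introduce the change of variables $y=x/h(t)$ on the $u$-equation so that the moving domain $0<x<h(t)$ becomes the fixed strip $0<y<1$. This transforms the first PDE into a uniformly parabolic equation for $\tilde u(t,y):=u(t,h(t)y)$ on $[0,T]\times[0,1]$ with coefficients depending on $h,h'$, homogeneous Neumann condition at $y=0$ and Dirichlet condition at $y=1$. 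The $v$-equation is already on the fixed half-line $x>0$, so it needs no transformation; only note that the coupling term $b_2(x)v/(u+v)-\delta_2(u+v)$ must be handled where $u$ is defined (i.e. extended by $0$, or the nonlinearity truncated, for $x\ge h(t)$) — this is a point that requires a little care because of the $v/(u+v)$ factor near $u=v=0$.

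Next I would set up the fixed-point map. Given $(h,\tilde u, v)$ in a suitable closed bounded convex subset of the Banach space $C^{(1+\theta)/2,1+\theta}([0,T]\times[0,1])\times C^{(1+\theta)/2,1+\theta}(Q^{\infty})$, one solves the two decoupled linear parabolic problems (with the nonlinear reaction terms frozen at the given iterate, or kept as Lipschitz lower-order terms) to get new functions $(\bar u,\bar v)$, and then defines a new free boundary by $\bar h(t)=h_{0}-\mu\int_{0}^{t}\bar u_{x}(s,h(s))\,ds$, equivalently through the $\tilde u_y$ at $y=1$. Standard $L^p$ and then Schauder parabolic estimates (for the Neumann--Dirichlet problem on $[0,1]$, and for the Cauchy--Neumann problem on the half-line, using $v_0\in C^2\cap L^\infty$ and the boundedness of $b_i$) give a priori bounds that show this map sends a small enough ball into itself provided $T$ is small; the parabolic regularity also upgrades $\bar u_x(\cdot,h(\cdot))$ to $C^{\theta/2}([0,T])$, hence $\bar h\in C^{1+\theta/2}([0,T])$, and one checks $\bar h'(0)$ matches. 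Contraction for $T$ small then follows by estimating the differences of two iterates in the same norms, using Lipschitz dependence of all the (smooth, and away from $u+v=0$ for the second equation) nonlinear terms and of the coefficients on $h,h'$; the Banach fixed point theorem yields a unique local solution, and parabolic bootstrapping gives the claimed Hölder regularity and the bound in (ii) with $K,T$ depending only on the listed quantities.

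The main obstacle I anticipate is not the free-boundary transformation itself — that is routine — but two coupling issues specific to this system. First, the second equation is posed on all of $x>0$ while $u$ only lives on $0\le x\le h(t)$, so the reaction term $\tfrac{b_2(x)v}{u+v}-\delta_2(u+v)$ must be given a meaning for $x>h(t)$; the natural choice $u\equiv 0$ there makes that term $b_2(x)-\delta_2 v$, which is fine and Lipschitz in $v>0$, but one must verify the matching at $x=h(t)$ is consistent and that positivity of $v$ (hence boundedness of $v/(u+v)$ away from a singularity) is preserved — this uses the maximum principle and the fact that $v_0>0$. Second, since $h'$ depends on $u_x$ at the free boundary, closing the contraction requires controlling $\|\bar u_x(\cdot,h_1(\cdot))-\bar u_x(\cdot,h_2(\cdot))\|_{C^{\theta/2}}$ in terms of $\|h_1-h_2\|_{C^1}$ and the differences of the other components, which is the delicate estimate and the reason $T$ must be chosen small. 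Once local existence is in hand, global existence (stated implicitly as the section title) will follow in the usual way by deriving $t$-independent a priori bounds: $0\le u\le \max\{\|u_0\|_\infty, b_1/\delta_1\}$ and $0<v\le \max\{\|v_0\|_\infty, \sup b_2/\delta_2\}$ from the maximum principle, and $0<h'(t)\le C$ from a comparison/barrier argument bounding $-u_x(t,h(t))$, so the solution never blows up and extends to all $t>0$.
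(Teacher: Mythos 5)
Your proposal is correct and follows essentially the same route as the paper, which itself gives no detailed argument but simply invokes the contraction-mapping/Schauder scheme of Du and Lin's competition model with free boundary (the reference \cite{dusupandinf}): straightening the front, freezing the nonlinearities, parabolic estimates, and a fixed point for small $T$. The two coupling subtleties you flag — extending $u$ by zero for $x\ge h(t)$ so that the $v$-equation becomes logistic there, and the strict positivity of $v$ keeping $v/(u+v)$ nonsingular — are exactly the points one must check when adapting that argument, so nothing is missing.
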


The next result provides  some bounds on the solutions to system  \eqref{Eq_ourmodel-two} with initial conditions \eqref{eq initial functions}.
\begin{lemma}\label{le2.2}
Let $(u,v,h)$ be a solution of \eqref{Eq_ourmodel-two} for $t\in [0,T]$ for some $T>0$. Then,
\begin{enumerate}[(i)]
\item $0<u(t,x)\leq M_{1}$ for all $t\in (0,T]$ and $x\in [0,h(t))$, where $$M_1:=\max\left\{\frac{\|b_{1}\|_{L^{\infty}([0,\infty))}}{\delta_{1}}, \|u_{0}\|_{L^{\infty}([0,h_{0}])}\right\}.$$

\item $0<v(t,x)\leq M_{2}$ for all $t\in (0,T]$ and $x\in [0, +\infty)$, where $$M_2:=\max\left\{\frac{\|b_{2}\|_{L^{\infty}([0,\infty))}}{\delta_{2}}, \|v_{0}\|_{L^{\infty}([0,+\infty))}\right\}.$$

\item $0< h'(t)\leq \Lambda$ for all $t\in (0,T]$,
where $\Lambda> 0$ depends on $\mu$, $d_{1}$, $\|u_{0}\|_{L^{\infty}([0,h_{0}])}$ and $\|u_0'\|_{C[0,h_{0}]}$.
\end{enumerate}
\end{lemma}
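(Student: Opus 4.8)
The plan is to prove the three bounds in Lemma \ref{le2.2} by successive applications of the maximum principle (and its boundary-point variant) to the parabolic equations in \eqref{Eq_ourmodel-two}, using the signs of the nonlinearities and the structure of the free boundary condition.

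\textbf{Step 1: positivity and upper bound for $u$.} On the moving domain $\{0<x<h(t)\}$, the function $u$ satisfies a linear parabolic equation $u_t - d_1 u_{xx} = c_1(t,x)u$ with bounded coefficient $c_1 = b_1(x) - \delta_1(u+v)$ (bounded thanks to the a priori regularity from Theorem \ref{Thm_existence} and \eqref{B2}), together with the homogeneous Neumann condition at $x=0$ and the Dirichlet condition $u(t,h(t))=0$. Since $u_0>0$ on $(0,h_0)$, the strong maximum principle gives $u>0$ in the interior. For the upper bound, I would set $\overline M_1 := \max\{\|b_1\|_{L^\infty}/\delta_1,\ \|u_0\|_{L^\infty}\}$ and check that the constant function $\overline M_1$ is a supersolution: indeed $\partial_t \overline M_1 - d_1 \partial_{xx}\overline M_1 - \overline M_1(b_1 - \delta_1(u+v)) = \overline M_1(\delta_1(u+v) - b_1) \ge \overline M_1(\delta_1 \overline M_1 - b_1)\ge 0$ wherever $u\le\overline M_1$; a standard comparison argument (or the usual trick of looking at the first time and point where $u$ would touch $\overline M_1$, noting that at such an interior maximum $u_t\ge0$, $u_{xx}\le0$, forcing $b_1-\delta_1(u+v)\ge0$, i.e. $u \le b_1/\delta_1 \le \overline M_1$) yields $u\le M_1$ on $[0,T]$.

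\textbf{Step 2: positivity and upper bound for $v$.} The second equation is more delicate because of the term $b_2(x)v/(u+v)$, which is a priori only defined where $u+v>0$. The fix is to establish the positivity of $v$ first and then rewrite the nonlinearity as a bounded coefficient. Observe that for $v$ small and positive, $v/(u+v)\le 1$, so $v$ satisfies $v_t - d_2 v_{xx} \ge -\delta_2(u+v)v \ge -C v$ for a bounded $C$; since $v_0>0$, the maximum principle on each strip $x\in[0,L]$ (with the Neumann condition at $0$ and the bound from Step 1 controlling $u$) gives $v>0$ everywhere. With $v>0$ the coefficient $c_2(t,x):= b_2(x)v/(u+v) - \delta_2(u+v)$ is well-defined and bounded, and the same supersolution argument as in Step 1 — now with $\overline M_2 := \max\{\|b_2\|_{L^\infty}/\delta_2,\ \|v_0\|_{L^\infty}\}$ and using $b_2 v/(u+v)\le b_2$ — shows $v\le M_2$ on $[0,\infty)\times(0,T]$. (One should be slightly careful since $v$ lives on an unbounded domain; I would either invoke a Phragmén–Lindelöf–type maximum principle valid for bounded solutions, or compare on $[0,L]$ and let $L\to\infty$, using that $v$ is bounded by Theorem \ref{Thm_existence}.)

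\textbf{Step 3: bounds on $h'(t)$.} By the free boundary condition $h'(t) = -\mu u_x(t,h(t))$, and since $u>0$ in the interior while $u(t,h(t))=0$, the Hopf lemma at the boundary point $x=h(t)$ gives $u_x(t,h(t))<0$, hence $h'(t)>0$. For the upper bound I would use the standard barrier construction (as in \cite{Du2012,dusupandinf}): on a thin region $\Omega_M := \{(t,x): 0<t\le T,\ h(t)-M^{-1}<x<h(t)\}$ define an auxiliary function $w(t,x) := M_1\big[2M(h(t)-x) - M^2(h(t)-x)^2\big]$ with $M$ chosen large (depending on $d_1$, $M_1$, $\|u_0'\|_{C[0,h_0]}$, and $h_0$) so that $w$ is a supersolution of the $u$-equation on $\Omega_M$, $w\ge u$ on the parabolic boundary of $\Omega_M$, and $w(t,h(t))=u(t,h(t))=0$; then $u\le w$ there, which forces $-u_x(t,h(t))\le -w_x(t,h(t)) = 2M M_1$, giving $h'(t)\le \Lambda := 2\mu M M_1$.

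\textbf{Main obstacle.} The routine part is the comparison/maximum-principle bookkeeping; the genuinely delicate point is handling the singular-looking term $b_2 v/(u+v)$: one must secure $v>0$ (and, implicitly, that the pair $(u,v)$ never makes $u+v$ vanish in the relevant region) \emph{before} treating the $v$-equation as a linear equation with bounded coefficient, and then be careful about the unbounded spatial domain for $v$. The barrier construction for the $h'$ upper bound is the other technical step, but it is by now standard for one-phase Stefan free boundary problems and proceeds exactly as in \cite{Du2012}.
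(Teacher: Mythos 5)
Your proposal follows essentially the same route as the paper: positivity via the strong maximum principle, the upper bounds on $u$ and $v$ by comparison with constant (equivalently, logistic-ODE) supersolutions, the sign of $h'$ from Hopf's lemma at the free boundary, and the bound $h'\le 2\mu M M_1$ via the identical quadratic barrier $M_1\left[2M(h(t)-x)-M^2(h(t)-x)^2\right]$ on $[h(t)-M^{-1},h(t)]$. The one slip is the displayed chain $\overline{M}_1(\delta_1(u+v)-b_1)\ge \overline{M}_1(\delta_1\overline{M}_1-b_1)$ ``wherever $u\le\overline{M}_1$,'' which goes the wrong way as stated, but the first-touching-point argument you give in the same sentence (or the paper's comparison with the solution of $u'=u(\|b_1\|_\infty-\delta_1 u)$) repairs it.
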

\begin{proof}
The strong maximum principle yields that $u(t,x)>0$ for all $t\in (0,T]$ and $x\in [0,h(t))$, and $v(t,x)>0$ for all $t\in (0,T]$ and $x\in [0,+\infty)$. Note that $u(t,h(t))=0$ yields that $$u_{x}(t,h(t))<0\text{ for all }t\in(0,T].$$ Thus,  $h'(t)>0$ for $t\in (0,T]$. Next, we consider the initial value problem
\begin{equation}\label{le2.204}
\left\{\begin{array}{l}
u'(t)=u(t)(\|b_{1}\|_{L^\infty([0,\infty))}-\delta_{1}u(t)),\mbox{ for } t>0,\vspace{7 pt}\\
u(0)=\|u_{0}\|_{L^{\infty}([0,h_{0}])} .
\end{array}\right.
\end{equation}
From the comparison principle, we know that
$$u(t,x)\leq \max\left\{\frac{\|b_{1}\|_{L^{\infty}([0,\infty))}}{\delta_{1}}, \|u_{0}\|_{L^{\infty}([0,h_{0}])}\right\}.$$
Similarly, we can show that $$v(t,x)\leq \max\left\{\frac{\|b_{2}\|_{L^{\infty}([0,\infty))}}{\delta_{2}}, \|v_{0}\|_{L^{\infty}([0,+\infty))}\right\}.$$
To prove (iii), we first consider the auxiliary function
\begin{equation}\label{le2.205}
\omega_{1}(t,x):=M_{1}\left[2M(h(t)-x)-M^{2}(h(t)-x)^{2}\right]
\end{equation}
for $t\in[0,T]$ and $x\in[h(t)-M^{-1}, h(t)]$, where
$$\displaystyle{M = \max\left\{\frac{1}{h_{0}},\sqrt{\frac{\|b_{1}\|_{L^{\infty}([0,\infty))}}{2 d_{1}}},\frac{\|u_0'\|_{C[0,h_{0}]}}{M_{1}} \right\}}.$$
We have
\begin{equation}\label{le2.206}
\left\{\begin{array}{rl}
\omega_{1t}-d_{1}\omega_{1xx}&\geq 2d_{1}M_{1}M^{2}\geq b_{1} M_{1}\vspace{7 pt}\\
&\geq u[b_{1}-\delta_{1}(u+v)]=u_{t}-d_{1}u_{xx},\vspace{7 pt}\\
\omega_{1}(t,h(t))&=0=u(t,h(t)),\vspace{7 pt}\\
\omega_{1}(t,h(t)-M^{-1})&=M_{1}\geq u(t,h(t)-M^{-1}).
\end{array}\right.
\end{equation}
We also note that $$u_{0}(x)=-\int^{h_{0}}_{x}u_0'(s)ds\leq (h_{0}-x)\|u_0'\|_{C[0,h_{0}]}$$ and $$\omega_{1}(0,x)=M_{1}M(h_{0}-x)[2-M(h_{0}-x)]\geq M_{1}M(h_{0}-x),
\text{ for }x\in [h_{0}-M^{-1},h_{0}].$$ Thus, $\omega_{1}(0,x)\geq u(0,x)$. Applying the comparison principle, we get  $$\omega_{1}(t,x)\geq u(t,x),\text{ for }~t\in [0,T]~\text{ and  }~x\in [h(t)-M^{-1}, h(t)].$$ Since $\omega_{1}(t,h(t))=0=u(t,h(t))$, we then have \[ u_{x}(t,h(t))\geq \omega_{1x}(t,h(t))=-2MM_{1}.\]  Consequently, $h'(t)=-\mu u_{x}(t,h(t))\leq \Lambda $ with $\Lambda:=2\mu MM_{1}$.
\end{proof}
Bearing the above result in mind, we can show that the local solution obtained in Theorem \ref{Thm_existence}  can indeed be extended to all $t > 0$.
\begin{theorem}[Global existence and uniqueness]\label{Thm_uniqueness}
System  \eqref{Eq_ourmodel-two}-\eqref{eq initial functions} admits a unique solution for  $t\in [0, \infty)$.
\end{theorem}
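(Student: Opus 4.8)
The plan is to combine the local existence result (Theorem~\ref{Thm_existence}) with the a priori bounds of Lemma~\ref{le2.2} through a standard continuation argument, in the spirit of the corresponding proof in~\cite{dusupandinf}. By Theorem~\ref{Thm_existence}, the solution exists on a maximal interval $[0,T_{\max})$ with $T_{\max}\in(0,\infty]$. Arguing by contradiction, suppose $T_{\max}<\infty$. The crucial point is that, on $[0,T_{\max})$, the estimates of Lemma~\ref{le2.2} do \emph{not} deteriorate as $t\uparrow T_{\max}$: one has $0<u\le M_1$, $0<v\le M_2$, and $0<h'(t)\le\Lambda$; the last two facts give $h_0\le h(t)\le h_0+\Lambda T_{\max}$, so the free boundary stays bounded and bounded away from $0$ on the whole interval.

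Next I would flatten the free boundary. Setting $y=h_0 x/h(t)$ and $w(t,y)=u(t,h(t)y/h_0)$ turns the $u$-equation on $0<x<h(t)$ into a parabolic equation for $w$ on the fixed interval $(0,h_0)$ in which the second-order coefficient is $d_1(h_0/h(t))^2$, the first-order coefficient is $yh'(t)/h(t)$, and the remaining terms come from the reaction $u(b_1-\delta_1(u+v))$; by Lemma~\ref{le2.2} and the local $C^{1+\theta/2}$ regularity of $h$, these coefficients are bounded, uniformly parabolic, and H\"older continuous on $[0,T_{\max})$. The reaction terms $u(b_1-\delta_1(u+v))$ and $v(b_2v/(u+v)-\delta_2(u+v))$ are bounded on the relevant range — note $0\le v/(u+v)\le1$ since $u,v>0$ — so, treating them as bounded forcing terms, interior and boundary $L^p$ estimates followed by Schauder estimates yield bounds on $\|w\|_{C^{(1+\theta)/2,1+\theta}}$ over $(0,T_{\max})\times(0,h_0)$ and on $\|v\|_{C^{(1+\theta)/2,1+\theta}}$ over $(0,T_{\max})\times[0,\infty)$ with constants depending only on $h_0$, $\Lambda$, $T_{\max}$, $\theta$ and the data, hence independent of how close $t$ is to $T_{\max}$; the Stefan condition $h'(t)=-\mu(h_0/h(t))w_y(t,h_0)$ then upgrades $h$ to a uniform $C^{1+\theta/2}$ bound. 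I expect this step — producing parabolic estimates uniform up to $T_{\max}$ after the change of variables — to be the main obstacle: one must check that the transformed coefficients stay controlled (which relies precisely on $h$ being bounded and bounded away from $0$, i.e. on part~(iii) of Lemma~\ref{le2.2} and $h'>0$), and one must handle the unbounded spatial domain for $v$ by combining interior Schauder estimates on expanding balls with a boundary estimate near $x=0$, using uniform parabolicity and bounded, H\"older continuous coefficients.

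With these uniform estimates in hand, fix $t_0\in(0,T_{\max})$ and apply Theorem~\ref{Thm_existence} afresh with initial time $t_0$ and initial data $(u(t_0,\cdot),v(t_0,\cdot),h(t_0))$. These data satisfy the structural requirements in~\eqref{eq initial functions} — $u_x(t_0,0)=0$, $u(t_0,h(t_0))=0$, $u(t_0,\cdot)>0$ on $(0,h(t_0))$ and $v(t_0,\cdot)>0$ by the strong maximum principle, and the needed $C^2$ regularity holds by parabolic smoothing for $t_0>0$ — with norms bounded uniformly in $t_0$ by the previous step. Hence the local existence time $\sigma>0$ furnished by Theorem~\ref{Thm_existence} may be taken independent of $t_0$. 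Choosing $t_0>T_{\max}-\sigma$, the restarted solution (which coincides with the original on their common interval by local uniqueness) extends the solution to $[0,t_0+\sigma)\supsetneq[0,T_{\max})$, contradicting the maximality of $T_{\max}$. Therefore $T_{\max}=\infty$ and the solution is global.

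Finally, global uniqueness follows from the local uniqueness in Theorem~\ref{Thm_existence} by a connectedness argument. If $(u_1,v_1,h_1)$ and $(u_2,v_2,h_2)$ are two solutions on $[0,\infty)$, let $T^\ast=\sup\{\,T>0:\ (u_1,v_1,h_1)\equiv(u_2,v_2,h_2)\ \text{on}\ [0,T]\,\}$. If $T^\ast<\infty$, continuity forces equality at $t=T^\ast$, and applying Theorem~\ref{Thm_existence} (local uniqueness) with initial time $T^\ast$ shows the two solutions agree on a further interval, contradicting the definition of $T^\ast$. Hence $T^\ast=\infty$.
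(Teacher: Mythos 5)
Your proposal is correct and follows essentially the same route as the paper: assume $T_{\max}<\infty$, invoke the uniform bounds of Lemma~\ref{le2.2}, obtain $C^{(1+\theta)/2,1+\theta}$ estimates uniform up to $T_{\max}$ via $L^{p}$/Schauder theory, and restart with a local existence time independent of the initial moment to contradict maximality. The only cosmetic difference is that the paper invokes the proof of Theorem 2.1 of \cite{dusupandinf} to get an extension time $\tau$ depending only on the $C^{1+\theta}$ bounds $M_1,M_2,M_3$, whereas you bootstrap to uniform $C^{2}$ data so as to reapply Theorem~\ref{Thm_existence} as stated; both are standard, and your explicit treatment of uniqueness by a connectedness argument is a harmless addition.
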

\begin{proof}
Let $[0, T_{max})$ be the maximal time interval in which the unique solution exists. We will show that  $T_{max}=\infty$. Suppose to the contrary that  $T_{max}<\infty$.  In view of Lemma \ref{le2.2}, there exists positive constants $M_{1}$, $M_{2}$ and $\Lambda$,  independent of $T_{max}$, such that for $t\in [0,T_{max}]$,
$$0<u(t,x)\leq M_{1},~0< v(t,x)\leq M_{2}~\text{ and }~0< h'(t)\leq \Lambda.$$
Fix $\delta \in (0,T_{max})$ and $K>T_{max}.$ Using the standard $L^{p}$ estimates together with the Sobolev embedding theorem and the H\"{o}lder estimates for parabolic equations (see Lunardi \cite{Lunardi} for eg.), we can find $M_{3}$ depending only on $\delta$, $K$, $M_{1}$ and $M_{2}$ such that
$$\|u(t,\cdot)\|_{C^{1+\theta}[0, h(t))}\leq M_{3}\text{ and } \|v(t,\cdot)\|_{C^{1+\theta}[0,+\infty)}\leq M_{3} ~\text{ for all }~t\in [\delta, T_{max}),$$
where we used the convention that $u(t,x)=0$ for $x\geq h(t)$.
By virtue of the proof of Theorem 2.1 in  \cite{dusupandinf}, there exists a $\tau>0$ depending only on $M_{1}$, $M_{2}$ and $M_{3}$ such that the solution of \eqref{Eq_ourmodel-two} with the initial time $T_{max}-\frac{\tau}{2}$ can be extended uniquely to the time $T_{max}+\frac{\tau}{2}$, which contradicts the definition of $T_{max}.$ Thus, $T_{max}=+\infty$ and the proof is complete.
\end{proof}

\section{The special case of constant birth rates}\label{specific model}

System \eqref{Eq_Huang} was investigated in  \cite{huangmugen1,huangmugen2} for two disjoint cases. Namely, the fitness benefit case and the fitness cost case.  Define $\kappa_1$ and $\kappa_2$ as
$\kappa_{1}=b_{1}/\delta_{1}$ and $ \kappa_{2}=b_{2}/\delta_{2}$.
{\it Wolbachia} is said to have the fitness benefit if $\kappa_1>\kappa_2$, which means that the local area is more (or at least equally) favourable for infected mosquitoes. The fitness cost case is represented by $\kappa_1<\kappa_2$, see \cite{BOZHENG}.

In this section, we assume that $b_{i}(x)=b_{i} $ for $i=1,2$, where $b_{i}$ are positive constants. In other words,
we have the constant-coefficient free boundary problem given by\begin{equation}\label{Eq_ourmodel}
\left\{\begin{array}{ll}
\displaystyle{\frac{\partial u}{\partial t}= d_{1}u_{xx}+u(b_{1}-\delta_{1}(u+v)),}&t>0, ~~0<x<h(t), \vspace{7 pt}\\
\displaystyle{\frac{\partial v}{\partial t}=d_{2}v_{xx}+v\left(\frac{b_{2}v}{u+v}-\delta_{2}(u+v)\right),}&t>0,~~ x>0,\vspace{7 pt}
\\
u_{x}(t,0)=v_{x}(t,0)=0, u(t,h(t))=0,&t>0,\vspace{7 pt}\\
h'(t)=-\mu u_{x}(t,h(t)),&t>0,\vspace{7 pt}\\  h(0)=h_{0},\vspace{7 pt}\\
u(0,x)=u_{0}(x), ~~ x\in[0,h_{0}], & \vspace{7 pt}\\v(0,x)=v_{0}(x), ~~ x\in[0,+\infty).
\end{array}\right.
\end{equation}
System \eqref{Eq_ourmodel} is essentially a competition model. For the fitness benefit case, $\kappa_1>\kappa_2$, $u$ is the so-called superior competitor and $v$ the inferior competitor (see \cite{dusupandinf}). For the fitness cost case, $\kappa_1<\kappa_2$,  \eqref{Eq_ourmodel} represents a strong competition \cite{xriben1995}. Throughout this section, we always assume $u$ is a superior competitor. That is, the {\it Wolbachia} infection has a fitness benefit. The strong competition case is usually more complicated to be studied mathematically. To the best of our knowledge, results for competition models with a free boundary are very limited in strong competition case. Further details can be seen in \cite{Zhou1,Zhou2}.

We organize this section as follows. In subsection \ref{Preliminary} we present some preliminary results, which play a  role in proving our main results. Subsection \ref{the_vanishing} is devoted to the vanishing case. The invasion dynamics is studied in detail  in Subsection \ref{the fitness benefit}. A rough estimation of asymptotic spreading speed of {\it Wolbachia} invasion is given in Subsection \ref{the speed}.

\subsection{Preliminary results}\label{Preliminary}
Consider the system
\begin{equation}\label{eqjingchangyong}
\left\{\begin{array}{l}
\displaystyle{\frac{\partial u}{\partial t}=d_{1} u_{xx}(t,x)+u(t,x)(b_{1}-\delta_{1}u(t,x))},~~ t>0,~0<x<L,\vspace{7 pt}\\
\displaystyle{u_{x}(t,0)=u(t,L)=0},\qquad t\in (0, \infty).
\end{array}\right.
\end{equation}
The following result holds.
\begin{lemma}\label{le3.1}  Let $L^{*}=\displaystyle{\frac{\pi}{2}\sqrt{\frac{d_{1}}{b_{1}}}}$ and $\displaystyle{d_{*}=\frac{4b_{1}L^{2}}{\pi^{2}}}$. Then,
\begin{enumerate}[(i)]
\item  if $L\leq L^{*}$, all positive solutions of \eqref{eqjingchangyong} tend to zero in $C([0, L])$ as $t \rightarrow +\infty.$
\item  If $L> L^{*}$, there exists a unique positive stationary solution $\phi$ of \eqref{eqjingchangyong} such that all positive solutions of \eqref{eqjingchangyong} approach  $\phi$  in $C([0, L])$ as $t \rightarrow +\infty$.
\end{enumerate}
\end{lemma}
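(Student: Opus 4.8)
The plan is to treat \eqref{eqjingchangyong} as a logistic (Fisher--KPP type) equation on the interval $[0,L]$ with mixed Neumann--Dirichlet boundary conditions, for which the long-time behavior is governed entirely by the sign of the principal eigenvalue of the associated linearization at $u\equiv 0$. First I would record that on $[0,L]$ with $u_x(t,0)=u(t,L)=0$, the principal eigenvalue $\lambda_1(L)$ of $-d_1\varphi'' = \lambda \varphi$ (same boundary conditions) is $\lambda_1(L) = d_1\pi^2/(4L^2)$, with positive principal eigenfunction $\varphi(x)=\cos\!\big(\tfrac{\pi x}{2L}\big)$. The linearized operator at $0$ for \eqref{eqjingchangyong} is $d_1\partial_{xx}+b_1$, so its principal eigenvalue is $b_1-\lambda_1(L) = b_1 - d_1\pi^2/(4L^2)$; this is $\le 0$ precisely when $L\le L^{*}=\tfrac{\pi}{2}\sqrt{d_1/b_1}$ (equivalently $d_1\ge d_{*}=4b_1L^2/\pi^2$), and $>0$ when $L>L^{*}$. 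This is the dichotomy threshold.

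For part (i), when $L\le L^{*}$ I would build a supersolution of the form $\overline u(t,x)=\eta e^{-\alpha t}\varphi(x)$ with $\alpha = \lambda_1(L)-b_1\ge 0$: plugging in, $\overline u_t - d_1\overline u_{xx} - \overline u(b_1-\delta_1\overline u) = \eta e^{-\alpha t}\varphi\big(-\alpha - \lambda_1(L)\cdot(-1)\cdot 0 \dots\big)$ — more carefully, $\overline u_t-d_1\overline u_{xx}-b_1\overline u = \eta e^{-\alpha t}(-\alpha + \lambda_1(L) - b_1)\varphi$, which is $\ge 0$ since $\alpha=\lambda_1(L)-b_1$ makes it vanish, hence $\overline u_t-d_1\overline u_{xx}-\overline u(b_1-\delta_1\overline u)=\delta_1\overline u^2\ge 0$; choosing $\eta$ large so that $\eta\varphi \ge u(1,\cdot)$ (using positivity of $\varphi$ on $[0,L)$ and a Hopf-lemma bound near $x=L$, after one unit of time to gain regularity and strict positivity structure) and comparing via the parabolic maximum principle gives $u(t,\cdot)\to 0$ uniformly when $\alpha>0$. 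In the borderline case $\alpha=0$ (i.e. $L=L^{*}$) the linear supersolution is only bounded, so I would instead use the strict negativity coming from the $-\delta_1 u^2$ term: either invoke a LaSalle/Lyapunov argument with the standard energy functional $E[u]=\int_0^L\big(\tfrac{d_1}{2}u_x^2 - \tfrac{b_1}{2}u^2 + \tfrac{\delta_1}{3}u^3\big)dx$, whose only nonnegative critical point when $L=L^{*}$ is $u\equiv 0$, or note that the $\omega$-limit set consists of nonnegative steady states and show directly that \eqref{eqjingchangyong} has no positive steady state for $L\le L^{*}$ (integrate the steady-state equation against $\varphi$: $\int(b_1-\delta_1\phi)\phi\varphi = \lambda_1(L)\int\phi\varphi$, forcing $\int(\lambda_1(L)-b_1+\delta_1\phi)\phi\varphi=0$ with a nonnegative integrand that is positive somewhere — contradiction).

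For part (ii), when $L>L^{*}$ the principal eigenvalue $b_1-\lambda_1(L)>0$, so $0$ is linearly unstable; I would produce a positive steady state by the monotone-iteration/sub-supersolution method: $\underline u = \varepsilon\varphi$ is a subsolution for small $\varepsilon>0$ (since $d_1\underline u_{xx}+\underline u(b_1-\delta_1\underline u)=\varepsilon\varphi(b_1-\lambda_1(L)-\delta_1\varepsilon\varphi)\ge 0$ for $\varepsilon$ small) and the constant $\overline u\equiv b_1/\delta_1$ is a supersolution; monotone iteration between them yields a minimal positive steady state $\phi$, and uniqueness follows from the standard sweeping/concavity argument for logistic nonlinearities (if $\phi_1,\phi_2$ are two positive steady states, consider $\sup\{t>0: t\phi_1\le\phi_2\}$ and use strict concavity of $s\mapsto s(b_1-\delta_1 s)/s$ together with the maximum principle to force $\phi_1\equiv\phi_2$). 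Convergence of every positive solution to $\phi$ then follows from the monotone dynamical systems framework: the equation generates a strongly order-preserving semiflow, squeeze an arbitrary positive solution between the trajectories starting from a small $\varepsilon\varphi$ (which increases to $\phi$) and from a large constant (which decreases to $\phi$), invoking the global attractivity of $\phi$ in the order interval. I expect the main obstacle to be the borderline case $L=L^{*}$ in part (i), where the linear comparison argument degenerates and one must genuinely exploit the nonlinear dissipation — this is where I would be most careful, most likely finishing it via the nonexistence-of-positive-steady-state argument combined with the fact that the $\omega$-limit set of a bounded orbit of this gradient-like system consists of equilibria.
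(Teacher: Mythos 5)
Your proposal is correct: the paper itself does not write out a proof but simply cites Propositions 3.1--3.3 of Cantrell--Cosner, and your argument (sign of the principal eigenvalue $b_1-d_1\pi^2/(4L^2)$, a decaying supersolution $\eta e^{-\alpha t}\cos(\pi x/(2L))$ plus nonexistence of positive steady states for $L\le L^*$, and sub/supersolution construction with uniqueness from the strict monotonicity of $f(s)/s$ and squeezing for $L>L^*$) is precisely the standard proof underlying those cited results. No gaps worth flagging; your care with the borderline case $L=L^*$, where the linear decay degenerates and one must fall back on the nonexistence of positive equilibria together with the gradient-like structure, is exactly the right point to be careful about.
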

\begin{proof} (i) and (ii) follow from Propositions 3.1, 3.2 and 3.3 of~ \cite{GUOLAOSHI}.
\end{proof}
\noindent We recall the following comparison principle.
\begin{lemma}[Comparison principle \cite{dusupandinf}]\label{le3.2}
Assume that $0\leq T_{0}<T<+\infty$ and $\bar{h},\underline{h}\in C^{1}([T_{0},T])$.  Denote by \[G_{T}=\{(t,x)\in \mathbb{R}^{2}:t\in (T_{0},T],~~ x\in(0,\underline{h})\}\] \text{and }
\[G^1_{T}=\{(t,x)\in \mathbb{R}^{2}:t\in (T_{0},T]~\text{ and }~ x\in(0,\overline{h})\}.\]
Let $$\underline{u}\in C(\overline{G_{T}})\cap C^{1,2}(G_{T}),\quad \bar{u} \in C(\overline{G^1_{T}})\cap C^{1,2}(G^1_{T})$$ and  $$\bar{v},\underline{v} \in L^{\infty}\cap C([T_{0},T]\times[0,+\infty))\cap C^{1,2}((T_{0},T]\times[0,+\infty)).$$ Suppose that
\begin{subequations}\label{eqbjdl}
\begin{equation}
\left\{\begin{array}{l}
\displaystyle{\frac{\partial \bar{u}}{\partial t}-d_{1} \bar{u}_{xx}\geq\delta_{1} \bar{u}(\kappa_{1}-\bar{u}- \underline{v}),}\qquad T_{0}<t\leq T ,~ 0<x<\bar{h}(t),\vspace{7 pt}\\

\displaystyle{\frac{\partial \underline{u}}{\partial t}-d_{1} \frac{\partial^{2}\underline{u}}{\partial x^{2}}\leq\delta_{1} \underline{u}(\kappa_{1}-\underline{u}-\bar{v}), }\qquad T_{0}<t\leq T , ~0<x<\underline{h}(t),\vspace{7 pt}\\

\displaystyle{\frac{\partial \bar{v}}{\partial t}-d_{2}\bar{v}_{xx}\geq\delta_{2}\bar{v}\left(\frac{\kappa_{2}\bar{v}}{\underline{u}+\bar{v}}-\bar{v}- \underline{u}\right),}\qquad T_{0}<t\leq T ,~ x>0,\vspace{7 pt}\\

\displaystyle{\frac{\partial \underline{v}}{\partial t}-d_{2}\frac{\partial^{2}\underline{v}}{\partial x^{2}}\leq \delta_{2}\underline{v}\left(\frac{\kappa_{2}\underline{v}}{\bar{u}+\underline{v}}-\underline{v}- \bar{u}\right),}\qquad T_{0}<t\leq T ,~x>0,
\end{array}\right.
\end{equation}
\begin{equation}
\left\{\begin{array}{ll}
\underline{h}'(t)\leq -\mu \underline{u}_{x}(t,\underline{h}(t)),&T_{0}<t\leq T ,\vspace{7 pt}\\
\bar{h}'(t)\geq -\mu \bar{u}_{x}(t,\bar{h}(t)) , &T_{0}<t\leq T ,
\end{array}\right.
\end{equation}\\
and\\
\begin{equation}
\left\{\begin{array}{ll}
\bar{u}_{x}(t,0)\leq0,~~ \bar{u}(t,\bar{h}(t))=0, &T_{0}<t\leq T, \vspace{7 pt}\\
\partial_{x}\underline{u}(t,0)\geq 0, ~~ \underline{u}(t,\underline{h}(t))=0,&T_{0}<t\leq T,\vspace{7 pt} \\
\bar{v}_{x}(t,0)\leq0,~~  \underline{v}_{x}(t,0)\geq 0,&T_{0}<t\leq T, \vspace{7 pt}\\
\underline{h}(T_{0})\leq h(T_{0})\leq \bar{h}(T_{0}), \vspace{7 pt}\\
\underline{u}(T_{0},x) \leq u(T_{0},x)\leq \bar{u}(T_{0},x), &0\leq x\leq  h(T_{0}),\vspace{7 pt}\\
\underline{v}(T_{0},x) \leq v(T_{0},x)\leq \bar{v}(T_{0},x),&x\geq 0.
\end{array}\right.
\end{equation}
\end{subequations}\\
Let $(u, v, h)$ be the unique  solution of \eqref{Eq_ourmodel}. Then,\\
\begin{enumerate}[(i)]
  \item $h(t)\leq \bar{h}(t),$ $u(t,x)\leq \bar{u}(t,x)$ and $v(t,x)\geq \underline{v}(t,x)$ for all $(t,x)$ in $(T_{0},T]\times [0,+\infty).$\\
  \item $ h(t)\geq \underline{h}(t),$ $u(t,x)\geq \underline{u}(t,x)$ and $ v(t,x)\leq \bar{v}(t,x)$ for all  $(t,x)$ in $(T_{0},T]\times [0,+\infty).$
\end{enumerate}
\end{lemma}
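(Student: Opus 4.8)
The two assertions are mirror images of one another, so the plan is to prove only (i); part (ii) follows verbatim after interchanging the roles of $(\bar u,\underline v)$ and $(\underline u,\bar v)$. The structural fact that makes everything work is that, although \eqref{Eq_ourmodel} is not cooperative, it is \emph{competitive}: the reaction in the $u$-equation is non-increasing in $v$, and the reaction $\delta_2 v\big(\kappa_2 v/(u+v)-v-u\big)$ in the $v$-equation is non-increasing in $u$ (its $u$-derivative equals $\delta_2 v\big(-\kappa_2 v/(u+v)^2-1\big)<0$). Hence the quantities one should push down are $u$ and $-v$, which is exactly why the hypotheses pair $\bar u$ with $\underline v$. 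First I would reduce to the case in which all the inequalities at $t=T_0$ are strict, by replacing $\bar h$ with $\bar h+\varepsilon$ and perturbing the barriers accordingly, proving the claim for each $\varepsilon>0$ and letting $\varepsilon\downarrow 0$ at the end; this guarantees that the orderings $h<\bar h$, $u<\bar u$, $v>\underline v$ hold on a nonempty interval to the right of $T_0$.

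Next I would run a continuation argument. Let $\tau\in(T_0,T]$ be the largest time such that $h<\bar h$, $u\le\bar u$ and $v\ge\underline v$ all hold on $(T_0,\tau)$, and suppose for contradiction that $\tau<T$. On $(T_0,\tau)$ the domain $\{0<x<h(t)\}$ of $u$ sits inside the domain $\{0<x<\bar h(t)\}$ of $\bar u$, so $z:=\bar u-u$ is defined there, and subtracting the two equations gives the \emph{scalar} differential inequality
\[
z_t-d_1 z_{xx}-\delta_1\big(\kappa_1-\bar u-u-\underline v\big)z\ \ge\ \delta_1\,u\,(v-\underline v)\ \ge\ 0,
\]
the last step using $v\ge\underline v$; note the zeroth-order coefficient is bounded above by $\delta_1\kappa_1$. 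Likewise, writing $w:=v-\underline v$ and using the monotonicity of $g(u,v):=\delta_2 v\big(\kappa_2 v/(u+v)-v-u\big)$ in $u$ together with the mean value theorem, one obtains
\[
w_t-d_2 w_{xx}-c(t,x)\,w\ \ge\ \big(\text{a nonnegative coefficient}\big)\cdot(\bar u-u)\ \ge\ 0,
\]
with $c(t,x)=\partial_v g(\bar u,\eta)$ for some intermediate $\eta\in[\underline v,v]$. The boundary data of $z$ and $w$ have the right signs: at $x=0$, $z_x=\bar u_x\le 0$ and $w_x=-\underline v_x\le 0$, which is incompatible with Hopf's lemma at a negative boundary minimum; at $x=h(t)$, $z=\bar u\ge 0$ (recall $\bar u\ge 0$ by the maximum principle); and at $t=T_0$ both $z$ and $w$ are nonnegative. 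After an exponential shift the maximum principle then forces $z\ge 0$ and $w\ge 0$ on $(T_0,\tau)$, and the strong maximum principle makes them strictly positive in the interior. Consequently $u\le\bar u$ and $v\ge\underline v$ cannot be the constraint about to be violated at $t=\tau$, so necessarily $h(\tau)=\bar h(\tau)$.

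It remains to rule out $h(\tau)=\bar h(\tau)$. Since $h<\bar h$ on $(T_0,\tau)$ with equality at $\tau$, we have $h'(\tau)\ge\bar h'(\tau)$. On the other hand $z=\bar u-u\ge 0$ vanishes at $(\tau,h(\tau))=(\tau,\bar h(\tau))$ together with $u$, so Hopf's lemma yields $z_x(\tau,h(\tau))<0$, i.e. $\bar u_x(\tau,\bar h(\tau))<u_x(\tau,h(\tau))$, and therefore
\[
\bar h'(\tau)\ \ge\ -\mu\,\bar u_x(\tau,\bar h(\tau))\ >\ -\mu\,u_x(\tau,h(\tau))\ =\ h'(\tau),
\]
contradicting $h'(\tau)\ge\bar h'(\tau)$. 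Hence $\tau=T$; passing to the limit $\varepsilon\downarrow 0$ completes (i).

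The step I expect to be the genuine obstacle is making the two inequalities above legitimately \emph{decoupled} scalar inequalities: this rests on the competitive monotonicity (so that $\bar u\ge u$ feeds a good sign into the $w$-inequality and $v\ge\underline v$ into the $z$-inequality) and, crucially, on controlling the coefficient $c(t,x)=\partial_v g(\bar u,\eta)$, which involves the quotient $\kappa_2 v/(u+v)$ that is not globally Lipschitz near the origin. This is where the strict positivity of $v$ and $\underline v$ (so that $\eta$ stays away from $0$ on compact space-time sets) and the a priori bounds of Lemma~\ref{le2.2} (so that $u$, $v$ and $\bar u$ stay bounded) must be invoked \emph{before} any comparison is attempted; with those in hand $c$ is bounded above on the relevant region and the maximum principle applies. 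The bookkeeping at the moving front — the first-crossing time $\tau$ and the Hopf estimate there — is the other delicate point, though by now routine for one-phase Stefan free boundaries.
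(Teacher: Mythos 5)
The paper does not actually prove this lemma: it is quoted verbatim from Du--Lin \cite{dusupandinf} and used as a black box, so there is no in-paper argument to compare against. Your reconstruction is, in outline, exactly the standard proof from that reference: exploit the competitive (quasimonotone) structure to compare the pairs $(\bar u,\underline v)$ and $(u,v)$, reduce to strict initial inequalities, run a first-crossing argument in time, propagate the orderings of $u$ and $v$ by the maximum principle on the region where $h<\bar h$, and rule out a first contact $h(\tau)=\bar h(\tau)$ by the Hopf lemma at the free boundary together with the Stefan conditions. The sign computations you give for $z=\bar u-u$ and $w=v-\underline v$ are correct, and the final contradiction $\bar h'(\tau)>h'(\tau)$ versus $h'(\tau)\ge\bar h'(\tau)$ is the right one.

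Two points need tightening. First, the inequalities for $z$ and $w$ are not ``decoupled scalar inequalities'': the source term in the $z$-inequality is $\delta_{1}u\,w$ and the source term in the $w$-inequality is $(-\partial_{u}g)\,z$, so each requires the nonnegativity of the other. Since both coupling coefficients are nonnegative, $(z,w)$ satisfies a weakly coupled parabolic system with cooperative off-diagonal terms, and you should invoke the maximum principle for such systems (Protter--Weinberger) or run a joint first-touching argument for $\min(z,w)$ after an exponential shift; as written, the logic is circular. (Your worry about the quotient $\kappa_{2}v/(u+v)$ is harmless: $b\mapsto b^{2}/(a+b)$ has partial derivatives bounded by $2$ on the closed quadrant, so the zeroth-order coefficient is bounded without any lower bound on $v$.) Second, perturbing $\bar h$ to $\bar h+\varepsilon$ requires extending $\bar u$ past $x=\bar h(t)$, which is not given; the safe version of the reduction, and the one used in \cite{duyihongshou,dusupandinf}, perturbs the genuine solution instead, replacing $h_{0}$ by $(1-\varepsilon)h_{0}$ and $u_{0}$ by a slightly smaller datum, proving the strict inequalities for $(u^{\varepsilon},v^{\varepsilon},h^{\varepsilon})$, and passing to the limit by continuous dependence. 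With these repairs your argument is a correct proof of the lemma.
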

The following  follows from  Lemmas A.2 and A.3 in  \cite{xinwx1}.
\begin{lemma}\label{lexin3.4}
\begin{enumerate}[(a)]
\item Let $a$, $b$ and $q$ be fixed positive constants. For any given $\varepsilon>0$ and $L>0$, there exists $$\displaystyle{l>\max\left\{L,\frac{\pi}{2}\sqrt{\frac{d}{a}}\right\}}$$ such that, if  the continuous and non-negative function~$U(t,x)$~satisfies
\begin{equation}\label{xin2.3}
\left\{\begin{array}{ll}
U_{t}-dU_{xx}\geq U(a-bU), &t>0,~~ 0<x<l,\vspace{7 pt}\\
U_{x}(t,0)=0,U(t,l)\geq q,& t>0,~~ (q\geq0),
\end{array}\right.
\end{equation}
with $U(0,x)>0$ for all $x\in [0,l)$, then $$\liminf_{t\rightarrow+\infty}U(t,x)>\frac{a}{b}-
\varepsilon~\text{uniformly~on}~
[0,L].$$
\item Let $a$, $b$ and $q$ be fixed positive constants. For any given $\varepsilon>0$ and $L>0$, there exists $\displaystyle{l> \max\left\{L,\frac{\pi}{2}\sqrt{\frac{d}{a}}\right\}}$ such that $$\limsup_{t\rightarrow+\infty}V(t,x)<\frac{a}{b}+\varepsilon
~\text{uniformly~on}~[0,L],$$
where $V(t,x)$ is a continuous and non-negative function satisfying
\begin{equation}\label{xin2.4}
\left\{\begin{array}{ll}
V_{t}-dV_{xx}\leq V(a-bV), &t>0, 0<x<l,\vspace{7 pt}\\
V_{x}(t,0)=0,V(t,l)\leq q, &t>0,\;\; (q\geq0),
\end{array}\right.
\end{equation}
and $V(0,x)>0$ for all $x\in[0,l)$.
\end{enumerate}
\end{lemma}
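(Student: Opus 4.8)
The plan is to obtain both inequalities from the parabolic comparison principle by trapping $U$ (resp.\ $V$) against the solution of a \emph{scalar} logistic problem on the fixed interval $[0,l]$ whose steady state can be made as close to $a/b$ as we like on $[0,L]$ by taking $l$ large; Lemma~\ref{le3.1} (with $b_{1},\delta_{1},d_{1}$ replaced by $a,b,d$) supplies the needed long-time convergence.

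For part (a), fix $l>\max\{L,\tfrac{\pi}{2}\sqrt{d/a}\}$ (to be enlarged) and consider
\[
\begin{cases}
\underline{U}_{t}=d\,\underline{U}_{xx}+\underline{U}(a-b\,\underline{U}), & t>0,\ 0<x<l,\\
\underline{U}_{x}(t,0)=0,\quad \underline{U}(t,l)=0, & t>0,\\
\underline{U}(0,x)=\underline{U}_{0}(x), & 0\le x\le l,
\end{cases}
\]
with $\underline{U}_{0}$ continuous, positive on $[0,l)$, vanishing at $x=l$, and $\underline{U}_{0}\le U(0,\cdot)$ on $[0,l]$ (such a choice exists since $U(0,\cdot)$ is continuous and positive on $[0,l)$). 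Since $U$ is a supersolution of the logistic equation, $U_{x}(t,0)=\underline{U}_{x}(t,0)=0$ and $U(t,l)\ge q\ge 0=\underline{U}(t,l)$, the comparison principle yields $U(t,x)\ge\underline{U}(t,x)$ for $t>0$, $0\le x\le l$. As $l>\tfrac{\pi}{2}\sqrt{d/a}$, Lemma~\ref{le3.1}(ii) gives that $\underline{U}(t,\cdot)\to\phi_{l}$ in $C([0,l])$, the unique positive steady state, so $\liminf_{t\to\infty}U(t,x)\ge\phi_{l}(x)$ uniformly on $[0,L]$. It then remains to use the standard facts that $\phi_{l}$ is nondecreasing in $l$, bounded above by the constant $a/b$, and converges to $a/b$ uniformly on compact subsets of $[0,\infty)$ as $l\to\infty$; choosing $l$ so that $\phi_{l}>a/b-\varepsilon$ on $[0,L]$ completes (a).

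Part (b) is symmetric: I would dominate $V$ from above by the solution $\overline{V}$ of the logistic equation on $(0,l)$ with $\overline{V}_{x}(t,0)=0$, $\overline{V}(t,l)=\max\{q,a/b\}$, and constant initial datum $\overline{V}(0,\cdot)\equiv\max\{\|V(0,\cdot)\|_{L^{\infty}([0,l])},\,q,\,a/b\}$. Since $V$ is a subsolution and $V(t,l)\le q\le\max\{q,a/b\}$, comparison gives $V\le\overline{V}$; and $\overline{V}(t,\cdot)$ converges to the stationary solution $\psi_{l}$ of $d\psi''+\psi(a-b\psi)=0$ on $(0,l)$ with $\psi'(0)=0$, $\psi(l)=\max\{q,a/b\}$, which by comparison with the constants $a/b$ and $\max\{q,a/b\}$ satisfies $a/b\le\psi_{l}\le\max\{q,a/b\}$ and (again by monotonicity in $l$ and identification of the limit) tends to $a/b$ uniformly on $[0,L]$ as $l\to\infty$. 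Taking $l$ large with $\psi_{l}<a/b+\varepsilon$ on $[0,L]$ gives $\limsup_{t\to\infty}V(t,x)\le\psi_{l}(x)<a/b+\varepsilon$ uniformly on $[0,L]$.

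The comparison-principle bookkeeping and the appeal to Lemma~\ref{le3.1} are routine; the one genuinely quantitative step — the one that produces the required $l=l(\varepsilon,L)$ — is the convergence $\phi_{l},\psi_{l}\to a/b$ on compacts as $l\to\infty$, and this is where I expect the real work. I would prove it by observing that $\{\phi_{l}\}$ is monotone in $l$ and pinched between a fixed small positive subsolution (e.g.\ $\phi_{l_{0}}$) and the constant $a/b$, so its limit $\phi_{\infty}$ is, by interior elliptic estimates, a solution of $d\phi_{\infty}''+\phi_{\infty}(a-b\phi_{\infty})=0$ on $[0,\infty)$ with $\phi_{\infty}'(0)=0$ and $0<\phi_{\infty}\le a/b$; a phase-plane/energy analysis of this scalar ODE shows the only such solution is $\phi_{\infty}\equiv a/b$, and Dini's theorem turns the monotone pointwise convergence into uniform convergence on $[0,L]$ (the argument for $\psi_{l}$ is analogous). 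A minor point to check along the way is that the corner incompatibility of $\overline{V}$'s initial and boundary data is harmless by parabolic smoothing, and that the one-sided boundary inequalities at $x=l$ are oriented consistently with the barriers being genuine sub/supersolutions.
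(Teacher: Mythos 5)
Your argument is correct, and it is essentially the paper's own route: the authors give no proof here but simply defer to Lemmas A.2 and A.3 of \cite{xinwx1}, whose proofs are exactly this comparison against the truncated logistic problem on $[0,l]$ (via Lemma \ref{le3.1}) combined with the convergence of the steady states $\phi_{l},\psi_{l}$ to $a/b$ on compact sets as $l\to\infty$. Your identification of that last convergence as the genuinely quantitative step, and the monotonicity-plus-phase-plane-plus-Dini argument you sketch for it, is the standard and correct way to close it.
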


We are now in the position to present part of our main results.
\subsection{The vanishing case}\label{the_vanishing}
 We consider the vanishing case in this subsection.
\begin{theorem}\label{th4.2}
Let $(u,v,h)$ be the solution of system  \eqref{Eq_ourmodel} with initial data \eqref{eq initial functions}. If $h_{\infty}<+\infty$, then
$$\lim_{t\rightarrow +\infty} \|u(t,\cdot)\|_{C[0, h(t)]}=0~\text{ and }~\lim_{t\rightarrow +\infty}v(t,x)=\kappa_{2}$$
uniformly in any bounded subset of $[0,+\infty)$.
\end{theorem}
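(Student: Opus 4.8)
The plan is to exploit the free-boundary condition together with the one-dimensional structure to show that when $h_\infty<\infty$, the moving domain $[0,h(t)]$ eventually becomes "too small" to sustain a positive population of $u$, forcing $u\to0$; once $u$ is negligible, the $v$-equation becomes asymptotically the logistic equation $v_t=d_2 v_{xx}+\delta_2 v(\kappa_2-v)$ on the half-line with Neumann condition at $0$, whose solutions converge to $\kappa_2$. First I would invoke the bounds of Lemma \ref{le2.2} and parabolic regularity (as in the proof of Theorem \ref{Thm_uniqueness}) to get uniform H\"older and $C^{1+\theta}$ estimates on $u$ and $v$ on $[\delta,\infty)$; these give compactness for time-translates, so it suffices to identify the limit. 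From $h_\infty<\infty$ and $h'(t)>0$ one has $h(t)\uparrow h_\infty$, and by a standard argument (integrating the Stefan condition, or the argument in \cite{dusupandinf}) $h'(t)\to0$ and $u_x(t,h(t))\to0$ as $t\to\infty$.

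The decisive step for the $u$-component is to compare $u$ with the solution of the scalar logistic problem \eqref{eqjingchangyong} on a slightly larger fixed interval. Since $u$ satisfies $u_t-d_1 u_{xx}=u(b_1-\delta_1(u+v))\le u(b_1-\delta_1 u)$ on $0<x<h(t)$ with $u_x(t,0)=0$ and $u(t,h(t))=0$, and $h(t)<h_\infty$ for all $t$, $u$ is a subsolution of \eqref{eqjingchangyong} with $L=h_\infty$ (extending $u$ by $0$ past $h(t)$ and using $u(t,h_\infty)=0\le$ the barrier). Hence if $h_\infty\le L^*=\frac\pi2\sqrt{d_1/b_1}$, Lemma \ref{le3.1}(i) immediately gives $\|u(t,\cdot)\|_{C[0,h(t)]}\to0$. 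For $h_\infty>L^*$ this crude comparison is not enough, and here is where I expect the main obstacle: one must use that $h'(t)\to0$, i.e. $u_x(t,h(t))\to0$, which is incompatible with $u$ staying bounded away from $0$ on a fixed subinterval. The cleanest route is the contradiction argument of Du--Lin type: if $\limsup_{t\to\infty}\|u(t,\cdot)\|_{C[0,h(t)]}>0$, take $t_n\to\infty$ realizing this, translate in time, and extract (via the regularity estimates) a limit $(\tilde u,\tilde h)$ solving the limiting problem on the fixed domain $[0,h_\infty]$ with $\tilde h'\equiv0$ hence $\tilde u_x(\cdot,h_\infty)\equiv0$; but $\tilde u(\cdot,h_\infty)=0$ together with $\tilde u\ge0$ and the Hopf lemma forces $\tilde u_x(\cdot,h_\infty)<0$ unless $\tilde u\equiv0$, a contradiction. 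This yields $u(t,x)\to0$ uniformly on $[0,h(t)]$.

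For the $v$-component, having shown $u\to0$ uniformly (and $u\equiv0$ for $x\ge h_\infty$), I would trap $v$ between solutions of two scalar logistic equations. Given $\varepsilon>0$, for $t$ large we have $0\le u(t,x)\le\varepsilon$ everywhere, so on the half-line $v$ satisfies
\[
v_t-d_2 v_{xx}\le v\left(b_2-\delta_2 v\right),\qquad
v_t-d_2 v_{xx}\ge v\left(\frac{b_2 v}{v+\varepsilon}-\delta_2(v+\varepsilon)\right).
\]
Applying Lemma \ref{lexin3.4}(a)--(b) on an interval $[0,l]$ with $l$ chosen large (using $V(t,l)\le M_2$ and, for the lower bound, $U(t,l)$ bounded below by a positive constant coming from the sub-solution dynamics), I get
$\kappa_2-C\varepsilon\le\liminf_t v(t,x)\le\limsup_t v(t,x)\le\kappa_2+C\varepsilon$ uniformly on any fixed bounded set, where $C$ depends only on the fixed parameters. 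Letting $\varepsilon\to0$ gives $v(t,x)\to\kappa_2$ uniformly on bounded subsets of $[0,\infty)$, completing the proof. The only subtlety in this last part is producing a uniform positive lower bound for $v$ on the relevant interval so that Lemma \ref{lexin3.4}(a) applies; this follows from $v_0>0$, the strong maximum principle, and a single application of the logistic lower-sub-solution before sending $\varepsilon\to0$.
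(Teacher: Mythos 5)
Your proposal is correct and follows essentially the same route as the paper: the $u$-component is handled by the Du--Lin-type contradiction argument (time translation, compactness from the uniform $C^{(1+\theta)/2,1+\theta}$ estimates, Hopf's lemma at $x=h_\infty$ versus $h'(t)\to 0$ from the Stefan condition), and the $v$-component by squeezing between logistic sub- and supersolutions via Lemma \ref{lexin3.4}. The preliminary comparison with \eqref{eqjingchangyong} when $h_\infty\le L^*$ is an unnecessary detour, and the ``subtlety'' you flag for the lower bound on $v$ is harmless since Lemma \ref{lexin3.4}(a) only requires $q\ge 0$, but neither affects the validity of the argument.
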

\begin{proof}  Theorem \ref{Thm_existence} yields that for $\theta \in (0,1)$, there is a constant $\hat{C}$  depending on $\theta$,
$(u_{0}, v_{0})$, $h_{0}$ and $h_{\infty}$ such that
\begin{equation}\label{eqth4.2yong}
\|u\|_{C^{(1+\theta)/2,1+\theta}(G)}+\|v\|_{C^{(1+\theta)/2,1+\theta}(G)}
+\|h(t)\|_{C^{1+\theta/2}([0,\infty))}\leq \hat{C},
\end{equation}
where $$G:=\{(t,x)\in[0,\infty)\times[0, h(t)]\}.$$ Suppose that $$\displaystyle{\limsup_{t \rightarrow +\infty}\|u(t, \cdot)\|_{C([0, h(t)])}}= \varepsilon >0.$$
Then, there exists a sequence $(t_{k}, x_{k})$ in  $(0, \infty)\times [0, h(t)]$, where $t_{k} \rightarrow \infty$ as $k \rightarrow \infty$, such that $$u(t_{k}, x_{k})\geq \frac{\varepsilon}{2}\text{ for all }k \in \mathbb{N}.$$ Note that $0\leq x_{k}<h(t_{k})< h_{\infty} < \infty.$ By passing to a subsequence if necessary, it follows that $x_{k} \rightarrow x_{0} \in (0, h_{\infty})$ as $k \rightarrow \infty$.
Define \[u_{k}(t, x):=u (t+t_{k},x)~\text{ and }~v_{k}(t, x)=v (t+t_{k},x)\] for $t \in (-t_{k}, \infty)$ and $x \in [0, h(t+t_{k})]$. It follows from \eqref{eqth4.2yong} and  standard parabolic regularity  that $\{(u_{k},v_{k}) \}$  has a subsequence $\{(u_{k_{i}},v_{k_{i}}) \}$ satisfying $(u_{k_{i}},v_{k_{i}}) \rightarrow (\tilde{u}, \tilde{v})$ as $k_{i} \rightarrow \infty$, where $(\tilde{u}, \tilde{v})$ is the solution to the following system
\begin{equation}\label{eqth4.2y1}
\left\{\begin{array}{ll}
\displaystyle{\frac{\partial \tilde{u}}{\partial t}= d_{1}\tilde{u}_{xx}+\tilde{u}\left[b_{1}-\delta_{1}(\tilde{u}+\tilde{v})\right],}&(t,x)\in (-\infty, +\infty)\times(0, h_{\infty}),\vspace{7 pt}\\

\displaystyle{\frac{\partial\tilde{v}}{\partial t}=d_{2}\tilde{v}_{xx}+\tilde{v}\left[\frac{b_{2}\tilde{v}}{\tilde{u}+\tilde{v}}-\delta_{2}(\tilde{u}+\tilde{v})\right],}&(t,x)\in (-\infty, +\infty)\times(0, h_{\infty}),
\end{array}\right.
\end{equation}
with $\tilde{u}(t,h_{\infty})=0$ for all $t\in\mathbb{R}$. Since
$$\tilde{u}(0,x_{0})=\lim_{k_{i} \rightarrow  \infty} u_{k_{i}}(0,x_{k_{i}})=\lim_{k_{i} \rightarrow  \infty}u(t_{k_{i}},x_{t_{k_{i}}})\geq \frac{\varepsilon}{2},$$ the maximum principle implies that $\tilde{u} > 0$ in $(-\infty, +\infty)\times(0, h_{\infty})$. Hence, we can apply Hopf Lemma at the point $(0, h_{\infty})$  to obtain $\tilde{u}_{x}(0, h_{\infty})< 0.$
Therefore, we have $u_{x}(t_{k_{i}}, h(t_{k_{i}}))= \partial_{x} u_{k_{i}}(0, h(t_{k_{i}}))< 0$ for large $i$. This, together with the Stefan condition, implies that $h'(t_{k_{i}})>0$ .

\noindent On the other hand, $h_{\infty}< +\infty$ implies $h^{'}(t)\rightarrow 0$ as $t\rightarrow\infty$ (see Lemma 3.3 in  \cite{twotaiwanren}). This is a contradiction. Thus, $$\lim_{t\rightarrow +\infty} \|u(t,\cdot)\|_{C[0, h(t)]}=0.$$

Next, we prove that $\displaystyle{\lim_{t \rightarrow +\infty} v (t,x)= \kappa_{2}}$. Having $\displaystyle{\lim_{t\rightarrow +\infty} \|u(t,\cdot)\|_{C[0, h(t)]}=0}$ implies that, for any $\varepsilon \in (0,1)$,  there exists $T>0$  such that $0 \leq u(t,x) \leq \varepsilon $ for all $t > T$ and $x\in (0 + \infty)$. Thus,
\begin{equation}\label{eqth4.2y2}
\left\{\begin{array}{ll}
\displaystyle{\frac{\partial v}{\partial t}\geq d_{2}v_{xx}+v\left[\frac{b_{2}v}{\varepsilon+v}-\delta_{2}(\varepsilon+v)\right],}&t>T, ~~x>0,\vspace{7 pt}\\
v_{x}(t,0)=0, ~~ v(t,+\infty)\geq 0,&t>T, \vspace{7 pt}\\ v(T,x)>0.
\end{array}\right.
\end{equation}
By Lemma \ref{lexin3.4} and the arbitrariness of $\varepsilon$, we have $\displaystyle{\liminf_{t \rightarrow +\infty}v(t,x)\geq b_2/\delta_2=\kappa_{2}}$  uniformly in any bounded subset of $[0, +\infty)$. This, together with the fact $\displaystyle{\limsup_{t \rightarrow +\infty}v(t,x)\leq \kappa_{2}},$ shows that $\displaystyle{\lim_{t \rightarrow +\infty}v(t,x)=\kappa_{2}}$.
\end{proof}

%%%%%%%%%%%%%%%%%%%%%%%%%%%%%%%%%%%%%%%%%%%%%%%%%
\subsection{The invasion dynamics}\label{the fitness benefit}
\begin{theorem}\label{th4.1}
Suppose $(u,v,h)$ is the solution of system  \eqref{Eq_ourmodel} under conditions \eqref{eq initial functions}. If $h_{\infty}=+\infty$, then
$\displaystyle{\lim_{t\rightarrow +\infty}u(t,x)=\kappa_{1}}$ and  $\displaystyle{\lim_{t\rightarrow +\infty}v(t,x)=0}$ uniformly in any compact subset of $[0,+\infty)$.
\end{theorem}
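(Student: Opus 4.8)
The plan is to run an iterative ``squeezing'' argument: bound $(u,v)$ above and below by spatially homogeneous sub- and supersolutions built from scalar logistic-type comparisons and from Lemma~\ref{lexin3.4}, and then exploit the fitness-benefit hypothesis $\kappa_1>\kappa_2$ to force the iteration to converge to $(\kappa_1,0)$.

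\emph{Step 1 (initial bounds).} Since $h$ is nondecreasing and $h_\infty=+\infty$, for every $l>0$ there is $t_l$ with $h(t)>l$ for all $t\ge t_l$. From $\tfrac{b_2v}{u+v}\le b_2$ and $\delta_2(u+v)\ge\delta_2v$, the function $v$ satisfies $v_t\le d_2v_{xx}+v(b_2-\delta_2v)$ on $(0,\infty)$ with $v_x(t,0)=0$; comparison with the logistic ODE gives $\limsup_{t\to\infty}v(t,x)\le\kappa_2$ uniformly on $[0,+\infty)$, and likewise $\limsup_{t\to\infty}u(t,x)\le\kappa_1$ uniformly. Then, for a fixed small $\varepsilon>0$ and $t$ large, $v\le\kappa_2+\varepsilon$ and hence on $0<x<h(t)$
\[
u_t\ge d_1u_{xx}+u\bigl(b_1-\delta_1(\kappa_2+\varepsilon)-\delta_1u\bigr),\qquad u_x(t,0)=0,\quad u(t,h(t))=0 .
\]
Because $\kappa_1>\kappa_2$, the coefficient $a:=b_1-\delta_1(\kappa_2+\varepsilon)$ is positive for small $\varepsilon$; restricting $u$ to the fixed interval $[0,l]$ furnished by Lemma~\ref{lexin3.4}(a) with $q=0$ (legitimate once $h(t)>l$, which holds for large $t$) yields $\liminf_{t\to\infty}u(t,x)\ge a/\delta_1-\varepsilon$ uniformly on any $[0,L]$. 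Letting $\varepsilon\to0$ gives $\liminf_{t\to\infty}u(t,x)\ge\kappa_1-\kappa_2>0$ uniformly on compacts.

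\emph{Step 2 (the iteration).} Put $\underline u_0:=0$. Given $\underline u_n\in[0,\kappa_1)$ with $\liminf_{t\to\infty}u\ge\underline u_n$ uniformly on compacts, observe that $u\ge\underline u_n-\varepsilon$ for large $t$ and that the reaction term of the $v$-equation is decreasing in $u$; hence $v(t,x)\le\bar V(t)$, where $\bar V$ solves the scalar ODE $\bar V'=\bar V\bigl(\tfrac{b_2\bar V}{(\underline u_n-\varepsilon)+\bar V}-\delta_2((\underline u_n-\varepsilon)+\bar V)\bigr)$ with $\bar V(T)=\sup_{x\ge0}v(T,x)$. That ODE has $\limsup_{t\to\infty}\bar V(t)\le w_+(\underline u_n-\varepsilon)$, where $w_+(c)$ denotes the larger root of $\kappa_2w=(c+w)^2$ when a positive root exists and $w_+(c):=0$ otherwise; $c\mapsto w_+(c)$ is nonincreasing. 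Letting $\varepsilon\to0$ gives $\limsup_{t\to\infty}v(t,x)\le\bar v_n:=w_+(\underline u_n)$ uniformly on $[0,+\infty)$. Feeding $v\le\bar v_n+\varepsilon$ back into the $u$-equation and rerunning the Lemma~\ref{lexin3.4}(a) argument of Step~1 (with $\kappa_2$ replaced by $\bar v_n$; the growth rate $b_1-\delta_1\bar v_n\ge b_1-\delta_1\kappa_2>0$ stays bounded below) yields $\liminf_{t\to\infty}u(t,x)\ge\underline u_{n+1}:=\kappa_1-\bar v_n$ uniformly on compacts. By induction, using monotonicity of $w_+$, the sequence $\{\underline u_n\}$ is nondecreasing and bounded by $\kappa_1$ and $\{\bar v_n\}$ nonincreasing and bounded below by $0$; so $\underline u_n\uparrow\underline u_\infty$ and $\bar v_n\downarrow\bar v_\infty$ with $\underline u_\infty=\kappa_1-\bar v_\infty$ and $\bar v_\infty=w_+(\underline u_\infty)$.

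\emph{Step 3 (identification, conclusion, and the main obstacle).} If $\bar v_\infty>0$, then $\kappa_2\bar v_\infty=(\underline u_\infty+\bar v_\infty)^2=\kappa_1^2$, so $\bar v_\infty=\kappa_1^2/\kappa_2>\kappa_2$, which contradicts $\bar v_\infty\le\kappa_2$. Hence $\bar v_\infty=0$ and $\underline u_\infty=\kappa_1$, and together with the crude bounds $\limsup_{t\to\infty}u\le\kappa_1$ and $v\ge0$ from Step~1 we conclude $\lim_{t\to\infty}u(t,x)=\kappa_1$ and $\lim_{t\to\infty}v(t,x)=0$ uniformly on compact subsets of $[0,+\infty)$. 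The routine ingredients here are the scalar logistic comparisons; the delicate points are (i) carrying out each $u$-comparison on the moving region $[0,h(t)]$, for which Lemma~\ref{lexin3.4}(a) with $q=0$ is tailor-made and only requires $h(t)$ to eventually exceed a fixed threshold (guaranteed by $h_\infty=+\infty$), and (ii) ruling out a nontrivial coexistence limit of the iteration — precisely the step where the fitness-benefit assumption $\kappa_1>\kappa_2$ is indispensable, via the identity $\bar v_\infty=\kappa_1^2/\kappa_2$.
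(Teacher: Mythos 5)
Your overall strategy (an iterative squeeze driving $(u,v)$ to $(\kappa_1,0)$) is a legitimate alternative to the paper's argument, which instead makes a single comparison with an auxiliary competition system on $[0,L]$ with boundary data $\underline u(t,L)=\delta/2$, $\bar v(t,L)=\kappa_2+\delta$, invokes monotone-dynamical-systems theory to pass to a stationary limit, lets $L\to\infty$, and finally compares with the spatially homogeneous ODE system, whose solution tends to $(\kappa_1,0)$ when $\kappa_1>\kappa_2$. However, as written your Step 2 contains a genuine gap.

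The problem is the upper bound on $v$. You compare $v$ on all of $x>0$ with the spatially homogeneous supersolution $\bar V(t)$ solving $\bar V'=\bar V\bigl(\tfrac{b_2\bar V}{(\underline u_n-\varepsilon)+\bar V}-\delta_2((\underline u_n-\varepsilon)+\bar V)\bigr)$. For $\bar V$ to be a supersolution of the $v$-equation you need $u(t,x)\ge \underline u_n-\varepsilon$ for \emph{every} $x>0$, but your inductive hypothesis only gives this on compact sets, and it is certainly false for $x>h(t)$, where $u\equiv 0$. Consequently the intermediate claim $\limsup_{t\to\infty}v\le w_+(\underline u_n)$ \emph{uniformly on} $[0,+\infty)$ is not just unproved but false: if $v_0\equiv\kappa_2$ far from the origin, then $\sup_{x\ge 0}v(t,\cdot)$ stays near $\kappa_2$ for all finite $t$, whereas $w_+(\underline u_1)<\kappa_2$. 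The repair is to localize the $v$-estimate as well: on an interval $[0,l]$ on which $u\ge c-\varepsilon$ for large $t$, one has $v_t-d_2v_{xx}\le v\bigl(b_2-\delta_2(c-\varepsilon)-\delta_2 v\bigr)$, and Lemma \ref{lexin3.4}(b) (with $q$ taken to be the global bound $M_2$ on $v$) then gives $\limsup_{t\to\infty} v\le\kappa_2-c+O(\varepsilon)$ on compact subsets. With this cruder bound the iteration becomes $\underline u_{n+1}=\kappa_1-\bar v_n$ and $\bar v_{n+1}=\kappa_2-\underline u_{n+1}=\bar v_n-(\kappa_1-\kappa_2)$, so $\bar v_n$ drops by the fixed amount $\kappa_1-\kappa_2>0$ at each step and reaches $0$ in finitely many steps; your fixed-point analysis via $w_+$ is then unnecessary. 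Everything else (Step 1, and the application of Lemma \ref{lexin3.4}(a) to $u$ on the moving domain once $h(t)$ exceeds a fixed threshold) is sound.
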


\begin{proof} Consider the system
\begin{equation}\label{eq ode1}
\left\{\begin{array}{l}
\tilde{u}^{'}(t)=\delta_{1}\tilde{u}(\kappa_{1}-\tilde{u}),t>0,\vspace{7 pt}\\
\tilde{u}(0)=\|u_{0}\|_{L^{\infty}([0,h_{0}])}.
\end{array}\right.
\end{equation}
Then, $\displaystyle{\lim_{t\rightarrow+\infty}\tilde{u}(t)=\kappa_{1}}$ and $u(t,x)\leq \tilde{u}(t)$. Consequently, we have
\[\limsup_{t\rightarrow+\infty}u(t,x)\leq\kappa_{1}\text{ uniformly for $x\in [0,+\infty)$.}\] In a similar manner, we can obtain that
$$\displaystyle{\limsup_{t\rightarrow+\infty} v(t,x) \leq \kappa_{2}}\text{  uniformly for $x \in [0,+\infty)$.}$$
Since $\kappa_{1}>\kappa_{2}$ then, for $\displaystyle{\delta=\frac{\kappa_{1}-\kappa_{2}}{2}}$, there exists $T_{1}>0$ such that $v(t,x)\leq \kappa_{2}+\delta$ for all  $t>T_{1}$ and $ x\geq 0$. If $h_{\infty}=+\infty$, then for any given $L$, there exists $\displaystyle{l>\left \{L,\frac{\pi}{2}\sqrt{\frac{d_{1}}{\delta_{1}\delta}}\right\}}$ such that $u$ satisfies
\begin{equation}\label{eq u1}
\left\{\begin{array}{ll}
\displaystyle{\frac{\partial u}{\partial t}\geq d_{1}u_{xx}+\delta_{1} u(\delta- u)},&t>T_{1}, 0<x<l, \vspace{7 pt}\\
u_{x}(t,0)= 0, u(t,l)\geq 0,&t>T_{1},\vspace{7 pt}\\
u(T_{1},x)>0, &0<x<l.
\end{array}\right.
\end{equation}
By  Lemma \ref{lexin3.4}, we know that for sufficiently small $\varepsilon>0,$ $\displaystyle{\liminf_{t\rightarrow+\infty}u(t,x)>\delta-\varepsilon}$ uniformly in any compact subset of $[0,L]$. Since $h_{\infty}=+\infty$,  there exists $T_{2}>T_{1}$ such that $h(T_{2})>L$ and $u(t,x)\geq {\delta}/{2} $ for all $t>T_{2}$ and $0\leq x<L$.
Then, $(u, v)$ satisfies
\begin{equation}\label{eq vu1}
\left\{\begin{array}{ll}
\displaystyle{\frac{\partial u}{\partial t}= d_{1}u_{xx}+u[b_{1}-\delta_{1}(u+v)],}&t>T_{2},~~ 0<x<L, \vspace{7 pt}\\
\displaystyle{\frac{\partial v}{\partial t}=d_{2}v_{xx}+v\left[\frac{b_{2}v}{u+v}-\delta_{2}(u+v)\right],}&t>T_{2},~~ 0<x<L,\vspace{7 pt}\\
u_{x}(t,0)=v_{x}(t,0)=0,&t>T_{2},\vspace{7 pt}\\
\displaystyle{u(T_{2},x)\geq \frac{\delta}{2},} ~~ v(T_{2},x)\leq \kappa_{2}+\delta,&0<x<L.
\end{array}\right.
\end{equation}
Let $(\underline{u},\bar{v})$ be the solution to the following problem:
\begin{equation}\label{eq vu2}
\left\{\begin{array}{ll}
\displaystyle{\frac{\partial\underline{u} }{\partial t}= d_{1}\underline{u}_{xx}+\underline{u}[b_{1}-\delta_{1}(\underline{u}+\bar{v})],}&t>T_{2}, ~~ 0<x<L, \vspace{7 pt}\\

\displaystyle{\frac{\partial\bar{v}}{\partial t}=d_{2}\bar{v}_{xx}+\bar{v}\left[\frac{b_{2}\bar{v}}{\underline{u}+\bar{v}}-\delta_{2}(\underline{u}+\bar{v})\right],}&t>T_{2},~~ 0<x<L,\vspace{7 pt}\\

\partial_{x}\underline{u}(t,0)=\bar{v}_{x}(t,0)=0,&t>T_{2},\vspace{7 pt}\\
\displaystyle{\underline{u}(t,L)= \frac{\delta}{2}}, ~~ \bar{v}(t,L)= \kappa_{2}+\delta,&t>T_{2},\vspace{7 pt}\\
\displaystyle{\underline{u}(T_{2},x)= \frac{\delta}{2},}~~  \bar{v}(T_{2},x)= \kappa_{2}+\delta,&0\leq x\leq L.
\end{array}\right.
\end{equation}

It follows from the comparison principle that $$u(t,x)\geq \underline{u}(t,x)\text{ and }v(t,x)\leq \bar{v}(t,x)\text{ for }t>T_{2}\text{ and }0\leq x \leq L.$$
By Corollary 3.6 of  \cite{H.SMITH1995}, we have \[\displaystyle{\lim_{t\rightarrow+\infty}\underline{u}(t,x)=\underline{u}_{L}(x)}\text{ and }\displaystyle{\lim_{t\rightarrow+\infty}\bar{v}(t,x)=\bar{v}_{L}(x)} \text{ uniformly in }[0,L].\]  Here, $(\underline{u}_{L},\bar{v}_{L})$ satisfies
\begin{equation}\label{eq new-add-a1}
\left\{\begin{array}{ll}
 d_{1}\partial_{xx}\underline{u}_{L}+\underline{u}_{L}[b_{1}-\delta_{1}(\underline{u}_{L}+\bar{v}_{L})]=0,& 0<x<L,\vspace{7 pt} \\
\displaystyle{d_{2}\partial_{xx}\bar{v}_{L}+\bar{v}_{L}\left[\frac{b_{2}\bar{v}_{L}}{\underline{u}_{L}+\bar{v}_{L}}-\delta_{2}(\underline{u}_{L}+\bar{v}_{L})\right]=0,}& 0<x<L,\vspace{7 pt}\\
\partial_{x}\underline{u}_{L}(0)=\partial_{x}\bar{v}_{L}(0)=0,\vspace{7 pt} \\
\displaystyle{\underline{u}_{L}(L)= \frac{\delta}{2}}, ~~ \bar{v}_{L}(L)= \kappa_{2}+\delta.
\end{array}\right.
\end{equation}
Letting $L\rightarrow+\infty$, it follows from standard elliptic regularity  and a diagonal procedure that $(\underline{u}_{L}(x),\bar{v}_{L}(x))$ converges to $(\underline{u}_{\infty}(x),\bar{v}_{\infty}(x))$ uniformly on any compact subset of $[0,+\infty)$, where $(\underline{u}_{\infty},\bar{v}_{\infty})$ satisfies
\begin{equation}\label{eq new-add-a2}
\left\{\begin{array}{ll}
 d_{1}\partial_{xx}\underline{u}_{\infty}+\underline{u}_{\infty}[b_{1}-\delta_{1}(\underline{u}_{\infty}+\bar{v}_{\infty})]=0,& x>0\vspace{7 pt} \\

\displaystyle{d_{2}\partial_{xx}\bar{v}_{\infty}+\bar{v}_{\infty}\left[\frac{b_{2}\bar{v}_{\infty}}{\underline{u}_{\infty}+\bar{v}_{\infty}}-\delta_{2}(\underline{u}_{\infty}+\bar{v}_{\infty})\right]=0,}& x>0\vspace{7 pt}\\

\partial_{x}\underline{u}_{\infty}(0)=\partial_{x}\bar{v}_{\infty }(0)=0, \vspace{7 pt}\\
\displaystyle{\underline{u}_{\infty}(x)\geq\frac{\delta}{2}}, ~~ \bar{v}_{\infty}(x)\leq \kappa_{2}+\delta,&0<x<+\infty.
\end{array}\right.
\end{equation}
We consider now the following system:
\begin{equation}\label{eq zhbode}
\left\{\begin{array}{ll}
\displaystyle{\frac{d u_{1}}{d t}= u_{1}(b_{1}-\delta_{1}(u_{1}+v_{1})),}&t>0,\vspace{7 pt} \\
\displaystyle{\frac{dv_{1}}{dt}=v_{1}\left(\frac{b_{2}v_{1}}{u_{1}+v_{1}}-\delta_{2}(u_{1}+v_{1})\right),}&t>0,\vspace{7 pt}\\
\displaystyle{u_{1}(0)=\frac{\delta}{2}}, ~~ v_{1}(0)=\kappa_{2}+\delta.
\end{array}\right.
\end{equation}
Since $\kappa_{1}>\kappa_{2},$ then $(u_{1},v_{1})\rightarrow (\kappa_{1},0)$ as $t\rightarrow +\infty$ (see Lemma 2.2 of  \cite{BOZHENG}, for e.g.).
 Then, the solution $(U,V)$ of the problem
 \begin{equation}\label{eq new-add-a3}
\left\{\begin{array}{ll}
\displaystyle{\frac{\partial U}{\partial t}=d_{1}U_{xx}+ U(b_{1}-\delta_{1}(U+V)),}&t>0, ~~ x\geq0,\vspace{7 pt}\\
\displaystyle{\frac{\partial V}{\partial t}= d_{2}V_{xx} +V\left(\frac{b_{2}V}{U+V}-\delta_{2}(U+V)\right),}&t>0,~~ x\geq0,\vspace{7 pt}\\
U_{x}(t,0)=V_{x}(t,0)=0,&t>0,\vspace{7 pt}\\
\displaystyle{U(0,x)=\frac{ \delta}{2}}, ~~ V(0,x)= \kappa_{2}+\delta,&x\geq0.
\end{array}\right.
\end{equation}
satisfies $( U(t,x),V(t,x))\rightarrow (\kappa_{1},0),$ as $t\rightarrow+\infty,$ uniformly in $x\in[0,+\infty)$.
By the comparison principle, we have $\underline{u}_{\infty}\geq U$ and $\bar{v}_{\infty}\leq V$ for $t\geq 0$,
 which immediately yields that
 $$\lim_{t\rightarrow +\infty}u(t,x)=\kappa_{1}\mbox{ and }\lim_{t\rightarrow +\infty}v(t,x)= 0.$$
\end{proof}

The criteria for spreading and vanishing are given in the following theorem.
\begin{theorem}\label{th4.3}
If $\displaystyle{h_{0}\geq\frac{\pi}{2}\sqrt{\frac{d_{1}}{\delta_{1}(\kappa_{1}-\kappa_{2})}}:=h_0^*}$, then $h_{\infty}=+\infty$.
\end{theorem}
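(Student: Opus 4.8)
The plan is to show that when $h_0 \ge h_0^*$, the free boundary cannot stay bounded, by constructing a suitable subsolution for the $u$-equation on a fixed interval and deriving a contradiction with Theorem \ref{th4.2}. The key observation is that Theorem \ref{th4.2} tells us the \emph{converse}: if $h_\infty < +\infty$, then $u(t,\cdot) \to 0$ uniformly and $v(t,x) \to \kappa_2$ uniformly on bounded sets. So I would argue by contradiction: assume $h_\infty < +\infty$, hence $h_\infty \ge h(0) = h_0 \ge h_0^*$, and derive a contradiction by showing $u$ is in fact bounded below away from zero on a fixed subinterval for all large $t$.

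First I would use Theorem \ref{th4.2} to fix a time $T_1$ after which $v(t,x) \le \kappa_2 + \varepsilon$ for all $x \in [0,h_\infty]$ (indeed for all $x \ge 0$), where $\varepsilon > 0$ is chosen small. Actually, since we are assuming $h_\infty<\infty$ is \emph{false} to be derived, I should instead not invoke the conclusion of Theorem \ref{th4.2} but rather work directly. Let me restructure: suppose for contradiction $h_\infty < +\infty$. By Theorem \ref{th4.2}, $\lim_{t\to\infty} v(t,x) = \kappa_2$ uniformly on $[0,h_\infty]$, so for small $\varepsilon>0$ there is $T_1$ with $v(t,x) \le \kappa_2 + \varepsilon$ for $t > T_1$, $x \in [0,h_\infty]$. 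Then on $0 < x < h_0 \le h(t)$ (using $h(t)$ increasing, so $h(t) \ge h_0$), $u$ satisfies
\begin{equation*}
u_t \ge d_1 u_{xx} + \delta_1 u(\kappa_1 - \kappa_2 - \varepsilon - u), \quad t > T_1,\ 0 < x < h_0,
\end{equation*}
with $u_x(t,0) = 0$ and $u(t,h_0) \ge 0$, and $u(T_1,\cdot) > 0$ on $(0,h_0)$. Since $h_0 \ge h_0^* = \frac{\pi}{2}\sqrt{d_1/(\delta_1(\kappa_1-\kappa_2))}$, for $\varepsilon$ small enough we have $h_0 > \frac{\pi}{2}\sqrt{d_1/(\delta_1(\kappa_1 - \kappa_2 - \varepsilon))} =: L^*(\varepsilon)$. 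By Lemma \ref{le3.1}(ii) applied to the logistic equation $w_t = d_1 w_{xx} + \delta_1 w(\kappa_1 - \kappa_2 - \varepsilon - w)$ on $(0,h_0)$ with the same boundary conditions, its positive solutions converge to the unique positive steady state $\phi_\varepsilon > 0$. The comparison principle then gives $\liminf_{t\to\infty} u(t,x) \ge \phi_\varepsilon(x) > 0$ for $x \in (0,h_0)$, in particular $\liminf_{t\to\infty} \|u(t,\cdot)\|_{C[0,h(t)]} > 0$.

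This contradicts the first conclusion of Theorem \ref{th4.2}, namely $\lim_{t\to\infty}\|u(t,\cdot)\|_{C[0,h(t)]} = 0$. Hence $h_\infty = +\infty$. The main obstacle, and the step requiring the most care, is the passage from the differential inequality for $u$ to a genuine comparison: I need a lower-solution of the logistic \emph{Dirichlet-Neumann} problem on $[0,h_0]$ sitting beneath $u(t,\cdot)$ at time $T_1$ — this is fine since $u(T_1,\cdot)>0$ on the open interval and one can take a small multiple of the principal eigenfunction as initial data, or directly invoke that the logistic solution with initial datum $\min\{u(T_1,\cdot),\ \text{small bump}\}$ stays below $u$ and converges up to $\phi_\varepsilon$. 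One also has to make sure $h(t) \ge h_0$ for all $t$, which is immediate from $h'(t) > 0$ (Lemma \ref{le2.2}(iii)), so the inequality $u(t,h_0) \ge 0$ is legitimate since $u \ge 0$ on $[0,h(t)]$. A secondary point is choosing $\varepsilon$ small enough that $L^*(\varepsilon) < h_0$; this is possible precisely because the hypothesis is the closed inequality $h_0 \ge h_0^*$ and $L^*(\varepsilon) \to h_0^*$ as $\varepsilon \to 0^+$, but when $h_0 = h_0^*$ exactly one must check $L^*(\varepsilon)$ is strictly decreasing in... wait, $L^*(\varepsilon)$ is strictly \emph{increasing} in $\varepsilon$, so $L^*(\varepsilon) > h_0^* = h_0$ for $\varepsilon > 0$, which breaks the argument in the boundary case. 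To handle $h_0 = h_0^*$ one should instead use that $h_\infty < \infty$ forces (by Theorem \ref{th4.2} again) $v \to \kappa_2$ and more: a finer analysis showing $h(t)$ must actually strictly exceed $h_0^*$, or invoke a separate known result that $h_\infty \le h_0^*$ whenever $h_\infty < \infty$ combined with a sharp threshold lemma; alternatively, one argues directly with $\varepsilon = 0$ using that $u(t,h(t)) = 0$ with $h(t) > h_0$ strictly for $t > 0$, giving room on a slightly larger interval $[0, h(T_1)]$ with $h(T_1) > h_0 = h_0^*$. I would carry out the $\varepsilon = 0$ variant: fix any $T_1 > 0$, note $h(T_1) > h_0 \ge h_0^* = \frac{\pi}{2}\sqrt{d_1/(\delta_1(\kappa_1 - \kappa_2))}$, choose $\varepsilon$ small so that $\frac{\pi}{2}\sqrt{d_1/(\delta_1(\kappa_1-\kappa_2-\varepsilon))} < h(T_1)$, and run the comparison on $[0, h(T_1)]$ with $u(t, h(T_1)) \ge 0$ for $t \ge T_1'$ where $T_1'$ is chosen so that also $v \le \kappa_2 + \varepsilon$ there; this restores strict inequality and completes the contradiction.
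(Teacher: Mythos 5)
Your proof is correct and follows essentially the same route as the paper: argue by contradiction via Theorem \ref{th4.2}, set up a logistic lower comparison problem on a fixed interval $[0,h(T)]$ whose length strictly exceeds the critical length $\frac{\pi}{2}\sqrt{d_1/(\delta_1(\kappa_1-\kappa_2)-\varepsilon)}$ (available because $h'>0$ makes $h(T)>h_0\geq h_0^*$), and invoke Lemma \ref{le3.1}(ii) to force $\liminf_{t\to\infty}u>0$, contradicting $\|u(t,\cdot)\|_{C[0,h(t)]}\to 0$. Your careful handling of the borderline case $h_0=h_0^*$ (working on $[0,h(T_1)]$ rather than $[0,h_0]$) is exactly the device the paper uses implicitly.
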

\begin{proof} Note that $h(t)$ is nondecreasing. We only need to show that $h_{\infty}<+\infty$ implies $h_{\infty}\leq h_0^*$.
It follows from Theorem~\ref{th4.2} that $h_{\infty}<+\infty$ implies
$$\displaystyle{\lim_{t\rightarrow +\infty} \|u(t,\cdot)\|_{C[0, h(t)]}=0}\text{ and }\displaystyle{\lim_{t\rightarrow +\infty}v(t,x)=\kappa_{2}}$$ uniformly in any bounded subset of $[0,+\infty)$.
 Assume that $h_{\infty}>h_0^*$. Then for sufficiently small $\varepsilon>0$, there exists $T>0$ such that $$\displaystyle{h(t)>\frac{\pi}{2}\sqrt{\frac{d_{1}}{\delta_{1}(\kappa_{1}-\kappa_{2})-\varepsilon}}}~\text{ and }~\displaystyle{v(t,x)\leq\kappa_{2}+\frac{\varepsilon}{\delta_{1}}}\text{ for }t\geq T\text{ and }x\in [0,+\infty).$$
 Let $\underline{u}$ be the solution of the following problem
\begin{equation}\label{eqth4.3y1}
\left\{\begin{array}{ll}
\displaystyle{\frac{\partial \underline{u}}{\partial t}- d_{1}\underline{u}_{xx}=\delta_{1}\underline{u}
\left(\kappa_{1}-\kappa_{2}-\frac{\varepsilon}{\delta_{1}}-\underline{u}\right),}& t>T,~~ 0<x<h(T),\vspace{7 pt}\\
\displaystyle{\underline{u}_{x}(t,0)=0=\underline{u}(t,h(T)), }&t>T,\vspace{7 pt}\\
\underline{u}(T,x)=u(T,x),&0<x<h(T).
\end{array}\right.
\end{equation}
 By the comparison principle, we have $\underline{u}(t,x)\leq u(t,x)$ for  all $t\geq T$ and $x\in [0,h(T)]$. Since $h(t)>\frac{\pi}{2}\sqrt{\frac{d_{1}}{\delta_{1}(\kappa_{1}-\kappa_{2})-\varepsilon}}$ for $t>T$ then, by Lemma \ref{le3.1}, we know that $\displaystyle{\lim_{t\rightarrow+\infty}\underline{u}= \underline{U}>0}$ uniformly in any compact subset of $(0,h(T))$, where $\underline{U}$ is the unique positive solution of
\begin{equation}\label{eqth4.3y2}
\left\{\begin{array}{ll}
\displaystyle{-d_{1}\underline{U}_{xx}=\delta_{1}\underline{U}\left[\kappa_{1}-\kappa_{2}-\frac{\varepsilon}{\delta_{1}}-\underline{U}\right], }& 0<x<h(T),\vspace{7 pt}\\
\underline{U}_{x}(t,0)=0=\underline{U}(t,h(T)).
\end{array}\right.
\end{equation}
Thus,  \[\displaystyle{\liminf_{t\rightarrow+\infty}u(t,x)\geq\lim_{t\rightarrow+\infty}\underline{u}(t,x)=\underline{U}(x)>0},\]
which is a contradiction. Therefore, $h_{\infty}\leq h_0^*$ and this completes the proof.
\end{proof}
\begin{theorem}\label{th4.4}
     If $h_{0}<h_0^*$, then there exists $\bar{\mu}>0$ such that~$h_{\infty}=+\infty$ as $\mu\geq\bar{\mu}$.
\end{theorem}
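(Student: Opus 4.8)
The plan is to construct, for $\mu$ large enough, a suitable lower-solution triple $(\underline{u},\underline{v},\underline{h})$ on a long-enough time interval so that the comparison principle of Lemma \ref{le3.2} forces the free boundary of the true solution to grow past the threshold $h_0^*$; once $h(t)>h_0^*$ at some finite time, Theorem \ref{th4.3} (applied with that later time as the new initial time, which is legitimate since the hypotheses are an open condition on $h(T)$) gives $h_\infty=+\infty$. The key point is that enlarging $\mu$ only speeds up the free boundary — it does not change the reaction terms — so by choosing $\mu$ huge we can drive $h(t)$ across $h_0^*$ before the $u$-component has a chance to decay.

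First I would fix a small time $\tau>0$ and use Lemma \ref{le2.2} together with the strong maximum principle to obtain a uniform positive lower bound $u(\tau,x)\geq c_0>0$ on, say, $[0,h_0/2]$, where $c_0$ depends only on the data and on $\tau$ but \emph{not} on $\mu$ (this uses that on $[0,\tau]$ the coefficients are bounded independently of $\mu$, and that $h(t)\geq h_0$ always). Next I would build an explicit lower solution for $u$ of the self-similar moving type used in \cite{dusupandinf,Du2012}: let $\underline{h}(t)$ solve an ODE $\underline{h}'(t)=-\mu\,\underline{u}_x(t,\underline{h}(t))$ with $\underline{h}(\tau)=h_0/2$, and take $\underline{u}(t,x)=c_0\,W\!\big((x)/\underline{h}(t)\big)$ for a fixed smooth profile $W$ on $[0,1]$ with $W(0)=1$, $W'(0)=0$, $W(1)=0$, $W'(1)<0$, $W>0$ on $[0,1)$. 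One checks that for $\mu$ large, $\underline{h}$ grows essentially like $\mu t$ while the parabolic inequality $\underline{u}_t-d_1\underline{u}_{xx}\leq \delta_1\underline{u}(\kappa_1-\underline{u}-\bar v)$ can be arranged to hold on a fixed short interval $[\tau,\tau+T_0]$ — the reaction and diffusion contributions are $O(1)$ in $\mu$ while the drift term coming from $\underline{h}'$ only \emph{helps} make $\underline{u}$ a subsolution (it is a standard computation that the moving-front correction has the favourable sign). Pairing $\underline{u}$ with $\bar v\equiv M_2$ (a supersolution for $v$ by Lemma \ref{le2.2}(ii)) and verifying the boundary and initial inequalities in \eqref{eqbjdl}, Lemma \ref{le3.2}(ii) yields $h(t)\geq\underline{h}(t)$ on $[\tau,\tau+T_0]$.

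Then I would simply pick $\bar\mu$ so large that $\underline{h}(\tau+T_0)>h_0^*$; since $\underline{h}(\tau+T_0)-h_0/2\geq c_1\mu$ for some $c_1>0$ independent of $\mu$ (by integrating the ODE and using $W'(1)<0$ together with a uniform-in-$\mu$ lower bound on $\underline u_x$ near the free boundary, obtained from a barrier exactly as in the proof of Lemma \ref{le2.2}(iii) but from below), this holds for all $\mu\geq\bar\mu$. At such $\mu$ we get $h(\tau+T_0)\geq\underline{h}(\tau+T_0)>h_0^*$, and restarting Theorem \ref{th4.3} at $t=\tau+T_0$ (its proof only uses the value of $h$ at the initial time, via the threshold length in Lemma \ref{le3.1}) gives $h_\infty=+\infty$.

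The main obstacle I anticipate is controlling $\underline{u}_x$ at the moving boundary \emph{uniformly in} $\mu$, so that the ODE for $\underline h$ genuinely produces displacement of order $\mu$: if the front profile were allowed to flatten near $x=\underline h(t)$ as $\mu$ grows, the gain in speed could be lost. This is handled by keeping the profile $W$ \emph{fixed} (independent of $\mu$) and only rescaling its support; the trade-off is that with a fixed profile one must check the differential inequality holds only for $t$ in a bounded interval $[\tau,\tau+T_0]$ and for $\underline h$ not too large, which is exactly why the argument is arranged to push $h$ past $h_0^*$ in finite time rather than asymptotically. A secondary technical point is ensuring all constants ($c_0$, the interval length $T_0$, the barrier bounds) are independent of $\mu$ — this is where one must be careful to invoke Lemma \ref{le2.2} and the local existence estimates of Theorem \ref{Thm_existence} only on the $\mu$-independent short time scale before $\mu$ is sent to infinity.
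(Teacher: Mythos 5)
The overall strategy (force $h$ past $h_0^*$ for large $\mu$, then restart Theorem \ref{th4.3}) is reasonable, and your device for making the lower bound $c_0$ at time $\tau$ independent of $\mu$ (comparison with a fixed-domain problem) matches what the paper does. But the core construction fails: a stretched profile $\underline{u}(t,x)=c_0\,W\!\left(x/\underline{h}(t)\right)$ with an expanding front cannot be a subsolution when $\underline{h}'\sim\mu$ is large. Compute
\begin{equation*}
\underline{u}_t=c_0\,W'\!\left(\tfrac{x}{\underline{h}}\right)\cdot\left(-\tfrac{x\,\underline{h}'(t)}{\underline{h}^2(t)}\right),
\end{equation*}
which is \emph{nonnegative} (since $W'\le 0$ and $\underline{h}'>0$) and, near the front where $W'$ is bounded away from zero, is of size $O(\mu)$. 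The subsolution inequality requires $\underline{u}_t-d_1\underline{u}_{xx}\le \delta_1\underline{u}(\kappa_1-\underline{u}-\bar v)$, whose right-hand side is $O(1)$ in $\mu$ (indeed possibly negative with $\bar v\equiv M_2$), while $d_1\underline{u}_{xx}=O(1)$ as long as $\underline{h}\ge h_0/2$. So the left side is $+C\mu$ against a right side of $O(1)$, and the inequality is violated for exactly the large $\mu$ you need. Your assertion that ``the moving-front correction has the favourable sign'' is the standard computation for a \emph{supersolution} with expanding support (upper bounds on the spreading speed); for a subsolution the sign is unfavourable, and damping the amplitude to fix it destroys the $O(\mu)$ front speed. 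The obstacle you flag (uniformity of $\underline{u}_x$ at the front) is not the real one.

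The paper avoids this entirely: after using $\limsup_t v\le\kappa_2$ to reduce $(u,h)$ to a supersolution-free comparison with the scalar diffusive logistic free boundary problem $\underline{u}_t-d_1\underline{u}_{xx}=\delta_1\underline{u}(\kappa_1-\kappa_2-\underline{u})$ with $\mu$-independent initial data $\widetilde{u}(T_1,\cdot)$ (obtained from the auxiliary fixed-domain system \eqref{eq new-add-sharp-criteria2}), it invokes Lemma 3.7 of \cite{duyihongshou}. That lemma rests on the integral identity obtained by integrating the equation over $[0,\underline{h}(t)]$, namely $\frac{d_1}{\mu}\left(\underline{h}(t)-h_0\right)=\int_0^{h_0}\widetilde{u}(T_1,x)\,dx-\int_0^{\underline{h}(t)}\underline{u}(t,x)\,dx+\int\!\!\int \delta_1\underline{u}(\kappa_1-\kappa_2-\underline{u})$, which shows that vanishing (hence $\underline{h}_\infty\le h_0^*$ and $\underline{u}\to0$) forces $\underline{h}_\infty\ge h_0+\frac{\mu}{d_1}\int_0^{h_0}\widetilde{u}(T_1,x)\,dx$ up to a controllable correction --- a contradiction for $\mu\ge\bar\mu$ with the explicit $\bar\mu$ stated in the proof. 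If you want to keep your finite-time-crossing framing, you would need to replace the traveling subsolution by this integral identity; as written, the key step of your argument does not go through.
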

\begin{proof} Since $\displaystyle{\limsup_{t\rightarrow+\infty}v(t,x)\leq \kappa_{2}+\varepsilon}$ uniformly for $x\in[0,+\infty)$, then there exists $T_{1}>0$ such that $v(t,x)\leq \kappa_{2}$ when $t>T_{1}$.
So, $(u,h)$ satisfies
\begin{equation}\label{eq new-add-sharp-criteria1}
\left\{\begin{array}{ll}
\displaystyle{\frac{\partial u}{\partial t}\geq d_{1}u_{xx}+\delta_{1}u[\kappa_{1}-\kappa_{2}-u],}& t>T_{1},~~ 0<x<h(t),\vspace{7 pt}\\
h'(t)=-\mu u_{x}(t,h(t)),&t>T_{1},\vspace{7 pt}\\
u_{x}(t,0)= 0, u(t,h(t))= 0,&t>T_{1},\vspace{7 pt}\\
u(T_{1},x)>0, &0<x<h(T_{1}).
\end{array}\right.
\end{equation}
Note that, $u(T_{1},x)$ depends on $\mu$. So, we consider the following problem.

\begin{equation}\label{eq new-add-sharp-criteria2}
\left\{\begin{array}{ll}
\displaystyle{\frac{\partial \widetilde{u}(t,x)}{\partial t}= d_{1}\widetilde{u}_{xx}+\widetilde{u}(b_{1}-\delta_{1}(\widetilde{u}+\widetilde{v})),}&t>0, ~~ 0<x<h_{0}, \vspace{7 pt}
\\

\displaystyle{\frac{\partial \widetilde{v}(t,x)}{\partial t}=d_{2}\widetilde{v}_{xx}+\widetilde{v}\left(\frac{b_{2}\widetilde{v}}{\widetilde{u}+\widetilde{v}}-\delta_{2}(\widetilde{u}+\widetilde{v})\right),}&t>0, ~~ 0<x<h_{0},\vspace{7 pt}\\
\widetilde{u}_{x}(t,0)=\widetilde{v}_{x}(t,0)=0,&t>0,\vspace{7 pt}\\
\widetilde{u}(t,h_{0})=0,&t>0,\vspace{7 pt}\\
\widetilde{u}(0,x)=u_{0}(x),&0<x<h_{0},\vspace{7 pt}\\
 \widetilde{v}(0,x)=\max\left\{\kappa_{2},\|v_{0}\|_{L^{\infty}([0,+\infty))}\right\},&0<x<h_{0},\vspace{7 pt} \\
\widetilde{v}_{x}(t,h_{0})=\max\left\{\kappa_{2},\|v_{0}\|_{L^{\infty}([0,+\infty))}\right\},& t>0.
\end{array}\right.
\end{equation}
It follows from the comparison principle that \[u(T_{1},x)\geq\widetilde{u}(T_{1},x)~\text{ for all }~(t,x)\in[0,+\infty)\times[0,h_{0}].\]  Clearly, $\widetilde{u}(T_{1},x)$ is independent of $\mu$.
Now, we consider the following system.
\begin{equation}\label{eqth4.4y1}
\left\{\begin{array}{ll}
\displaystyle{\frac{\partial \underline{u}}{\partial t}- d_{1}\underline{u}_{xx}=\delta_{1}\underline{u}[\kappa_{1}-\kappa_{2}-\underline{u}]},& t>T_{1},~~ 0<x<\underline{h}(t),\vspace{7 pt}\\
\underline{u}_{x}(t,0)=0=\underline{u}(t,\underline{h}(t)), &t>T_{1},\vspace{7 pt}\\
\underline{h}^{'}(t)=-\mu\underline{u}_{x}(t,\underline{h}(t)), &t>T_{1},\vspace{7 pt}\\
\underline{u}(T_{1},x)=\widetilde{u}(T_{1},x),& x\in [0,h_{0}],\vspace{7 pt}\\
\underline{h}(T_{1})=h_{0}.
\end{array}\right.
\end{equation}
 By Lemma \ref{le3.2}, we know that $\underline{h}(t)\leq h(t)$ for $t>T_{1}$. It follows from \cite[Lemma 3.7]{duyihongshou} that  $\underline{h}_{\infty}=+\infty$ if $\mu\geq\bar{\mu},$ where
$$\displaystyle{\bar{\mu}:=\max \left(1,\frac{\|\widetilde{u}(T_{1},x)\|_{\infty}}{\kappa_{1}-\kappa_{2}} \right)\frac{d_{1}(h_0^*-h_{0})}{\displaystyle{\int^{h_{0}}_{0}\widetilde{u}(T_{1},x)dx}}}.$$ This implies that $h_{\infty}=+\infty$.
\end{proof}
By Theorems \ref{th4.3} and \ref{th4.4}, we can also derive spreading criteria  in terms of the diffusion coefficient $d_1$, for any fixed $h_0$.
\begin{theorem}[Spreading criteria]\label{th4.6}
Let $\displaystyle{d^*_{1}=\frac{4\delta_{1}(\kappa_{1}-\kappa_{2})h^{2}_{0}}{\pi^{2}}}$, where $h_{0}$ is any prefixed positive constant. Then, spreading occurs provided that either  \begin{enumerate}
\item $0< d_{1}\leq d^*_{1}$ \item[] {or} \item  $d_{1}> d^*_{1}$ and $\mu\geq\bar{\mu}$.
\end{enumerate}
\end{theorem}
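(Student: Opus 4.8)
The plan is to deduce the statement directly from Theorems~\ref{th4.3} and~\ref{th4.4}, by reinterpreting the geometric threshold $h_0^*$ as an equivalent threshold on the diffusion coefficient $d_1$ while keeping $h_0$ fixed. First I would recall that, by definition, ``spreading occurs'' means exactly $h_\infty := \lim_{t\to\infty}h(t) = +\infty$, and that $h(t)$ is nondecreasing (a consequence of $h'(t)>0$ in Lemma~\ref{le2.2}(iii)), so that $h_\infty$ is well defined in $(0,+\infty]$ and it makes sense to speak of spreading versus vanishing.

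The only substantive observation is an algebraic one. Under the standing assumption $\kappa_1>\kappa_2$ of this section, both $h_0^*=\frac{\pi}{2}\sqrt{\frac{d_1}{\delta_1(\kappa_1-\kappa_2)}}$ and $d_1^*=\frac{4\delta_1(\kappa_1-\kappa_2)h_0^2}{\pi^2}$ are positive, so squaring both (positive) sides of $h_0\geq h_0^*$ and isolating $d_1$ shows that, for fixed $h_0$, the inequality $h_0\geq h_0^*$ is equivalent to $d_1\leq d_1^*$; likewise $h_0<h_0^*$ is equivalent to $d_1>d_1^*$.

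With this equivalence in hand I would split into the two cases of the statement. In Case 1, $0<d_1\leq d_1^*$ gives $h_0\geq h_0^*$, so Theorem~\ref{th4.3} applies and yields $h_\infty=+\infty$. In Case 2, $d_1>d_1^*$ gives $h_0<h_0^*$, so Theorem~\ref{th4.4} applies and produces $\bar{\mu}>0$ such that $h_\infty=+\infty$ whenever $\mu\geq\bar{\mu}$. In either case $h_\infty=+\infty$, i.e.\ spreading occurs, which is the assertion.

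I do not anticipate any genuine obstacle: all the analytic content already resides in Theorems~\ref{th4.3} and~\ref{th4.4}, and the present statement is merely a reformulation phrased in terms of $d_1$ rather than $h_0$. The one point deserving a line of care is to note explicitly that $\kappa_1>\kappa_2$ is what guarantees positivity of the thresholds and legitimacy of the square-root manipulation; beyond that the proof is a two-case application of the earlier results.
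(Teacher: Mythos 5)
Your proposal is correct and follows exactly the route the paper intends: the paper states Theorem~\ref{th4.6} without proof as an immediate consequence of Theorems~\ref{th4.3} and~\ref{th4.4}, and your algebraic observation that, for fixed $h_0$, the condition $h_0\geq h_0^*$ is equivalent to $d_1\leq d_1^*$ is precisely the (unwritten) content of that deduction. Nothing is missing.
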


Our next result is a criterion on ``vanishing".
\begin{theorem}\label{th4.5}
Assume that $$\displaystyle{h_{0}<\frac{\pi}{2}\sqrt{\frac{d_{1}}{b_{1}}}=\frac{\pi}{2}\sqrt{\frac{d_1}{\delta_1\kappa_1}}<h_0^*}.$$ Then, there exists $\underline{\mu}>0$ such that~$h_{\infty}<+\infty,$ whenever $\mu\leq\underline{\mu}$.
\end{theorem}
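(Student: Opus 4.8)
The plan is to show that when $h_0$ is below the threshold $\frac{\pi}{2}\sqrt{d_1/(\delta_1\kappa_1)}$, a sufficiently slow-moving free boundary (small $\mu$) cannot spread. First I would observe that, regardless of $v$, the function $u$ satisfies the differential inequality
\[
\frac{\partial u}{\partial t}\le d_1 u_{xx}+u(b_1-\delta_1 u)=d_1 u_{xx}+\delta_1 u(\kappa_1-u),\qquad 0<x<h(t),
\]
because $v\ge 0$ removes the competition term only in the favorable direction for an upper bound. So $u$ is a subsolution of the single-species logistic free boundary problem with coefficient $\kappa_1$. The idea is then to construct an explicit supersolution pair $(\bar u,\bar h)$ to this scalar problem whose free boundary $\bar h(t)$ stays bounded, and invoke the comparison principle (Lemma \ref{le3.2}, in its scalar specialization, or the classical one-species version from \cite{dusupandinf,Du2012}) to conclude $h(t)\le \bar h(t)<\infty$, hence $h_\infty<+\infty$.

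Next I would carry out the standard supersolution construction à la Du–Lin. Since $h_0<\frac{\pi}{2}\sqrt{d_1/(\delta_1\kappa_1)}$, pick $\gamma>1$ close to $1$ so that $\gamma h_0$ is still below that critical length $L^*=\frac{\pi}{2}\sqrt{d_1/b_1}$; then set $\bar h(t)=h_0\bigl(1+\gamma-\gamma e^{-\beta t}\bigr)$ for a small $\beta>0$, so $\bar h$ increases from $h_0$ to $h_0(1+\gamma)<2L^*$ — I may need to re-choose $\gamma$ so that even $h_0(1+\gamma)<L^*$, which is possible precisely because $h_0<L^*$. Define $\bar u(t,x)=M e^{-\beta t}\,\Phi(x/\bar h(t))$ where $\Phi(y)=\cos(\pi y/2)$ is the principal Dirichlet–Neumann eigenfunction on $[0,1]$ and $M\ge \|u_0\|_\infty/\Phi(h_0/\bar h(0)\cdot\, \cdot)$ is chosen large enough that $\bar u(0,x)\ge u_0(x)$. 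A direct computation (the one I would not grind through here) shows that for $\beta$ small and $M$ large, $\bar u_t-d_1\bar u_{xx}-\delta_1\bar u(\kappa_1-\bar u)\ge 0$ on $0<x<\bar h(t)$, using that the Rayleigh quotient $d_1\pi^2/(4\bar h(t)^2)$ stays strictly above $b_1=\delta_1\kappa_1$ thanks to $\bar h(t)\le h_0(1+\gamma)<L^*$; and $\bar u(t,0)_x=0$, $\bar u(t,\bar h(t))=0$.

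Then I would check the Stefan-type boundary inequality $\bar h'(t)\ge -\mu\,\bar u_x(t,\bar h(t))$. Here $-\bar u_x(t,\bar h(t))=M e^{-\beta t}\frac{\pi}{2\bar h(t)}$ is bounded above by a constant $C_0$ independent of $\mu$, while $\bar h'(t)=h_0\gamma\beta e^{-\beta t}\ge h_0\gamma\beta e^{-\beta t}$; so the inequality $\bar h'(t)\ge \mu C_0 e^{-\beta t}$ holds for all $t$ as soon as $\mu$ is small enough, say $\mu\le\underline\mu:=h_0\gamma\beta/C_0$ — this is where the smallness of $\mu$ enters, and it is the only place. With all hypotheses of the comparison principle verified (initial ordering $\bar h(0)=h_0$, $\bar u(0,\cdot)\ge u_0$, and $v\ge 0\ge$ whatever is needed for the $u$-only comparison), we get $h(t)\le\bar h(t)\le h_0(1+\gamma)$ for all $t>0$, so $h_\infty\le h_0(1+\gamma)<+\infty$, which is vanishing. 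The main obstacle I anticipate is bookkeeping: making the three constraints — $\bar h$ stays below $L^*$, $\bar u$ is a supersolution of the logistic PDE, and the Stefan inequality holds — simultaneously satisfiable by a single coherent choice of $\gamma,\beta,M$; all three are compatible precisely because the strict inequality $h_0<L^*$ gives room, but the parameter juggling (especially coupling $\beta$ small for the PDE supersolution yet $\beta$ not too small relative to $\mu$) must be done carefully, and one must be slightly careful that the scalar comparison principle applies to our $u$-component even though the full system's comparison lemma is stated for the pair — this is handled by noting $v\ge0$ and that $\bar u$ with $\underline v\equiv 0$ forms a legitimate super-solution in the sense of Lemma \ref{le3.2}(i).
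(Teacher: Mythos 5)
Your argument is correct and follows essentially the same route as the paper: drop $v\ge 0$ to reduce to the scalar logistic free-boundary problem, then dominate it by the Du--Lin cosine supersolution with a slowly expanding front, the paper simply citing Lemma~3.8 of \cite{duyihongshou} for that construction rather than redoing it. Only fix the slip ``pick $\gamma>1$ close to $1$'': you need $\gamma>0$ \emph{small} so that $h_{0}(1+\gamma)<L^{*}=\frac{\pi}{2}\sqrt{d_{1}/b_{1}}$, which is exactly the room the hypothesis $h_{0}<L^{*}$ provides.
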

\begin{proof} Consider the following problem
\begin{equation}\label{eqth4.5y2}
\left\{\begin{array}{ll}
\bar{u}_{t}-d_{1}\bar{u}_{xx}=\bar{u}(b_{1}-\delta_{1}\bar{u}),&t>0,~~ 0<x<\bar{h}(t),\vspace{7 pt}\\
\bar{u}_{x}(t,0)=0,\bar{u}(t,\bar{h}(t))=0,&t>0,\vspace{7 pt}\\
\bar{h}'(t)=-\mu u_{x}(t,\bar{h}(t)),&t>0,\vspace{7 pt}\\
u(0,x)=u_{0}(x),~~ \bar{h}(0)=h_{0},&x\in[0,h_{0}].
\end{array}\right.
\end{equation}
Lemma \ref{le3.2} applies and  yields that $$h(t)\leq \bar{h}(t)~ \text{ and }~u(t,x)\leq \bar{u}(t,x)~\text{ for }~t>0 \text{ and }0\leq x\leq h(t).$$ Furthermore, by Lemma 3.8 of  \cite{duyihongshou}, there exists $\underline{\mu}>0$ such that $\bar{h}_{\infty}<+\infty$ in the case  $\mu \leq \underline{\mu}$, where \[\underline{\mu}=\frac{\tilde{\delta}\tilde{\gamma} h^{2}_{0}}{4\tilde{M}},~~\tilde{\gamma}=\frac{1}{2}\left[(\frac{\pi}{2})^{2}\frac{d_{1}}{h^{2}_{0}}-b_{1}\right],\] and $\tilde{\delta}$, $\tilde{M}$ are such that $$\left(\frac{\pi}{2}\right)^{2}\frac{d_{1}}{(1+\tilde{\delta})^{2}h^{2}_{0}}-b_{1}=\tilde{\gamma}$$ and
$$u_{0}(x)\leq \tilde{M}\cos\left(\frac{\pi}{2}\frac{x}{h_{0}(1+\tilde{\delta}/2)}\right), \text{ for }x\in [0,h_{0}].$$ Therefore, $h_{\infty}< +\infty$.
\end{proof}

\subsection{The spreading speed}\label{the speed}
If spreading occurs, it is important to estimate the spreading speed of $h(t)$. Following an idea in  \cite{twotaiwanren}, one can obtain  a rough estimate of the spreading speed as stated in the following theorem.
\begin{theorem}[\cite{twotaiwanren}]\label{th6.1}
Suppose that $\kappa_1>\kappa_2$ and let $(u,v,h)$ be the solution of \eqref{Eq_ourmodel}. If $h_{\infty}=+\infty$, $u_{0}(x)\leq \kappa_{1}$ in $[0,h_{0})$, $~v_{0}(x)>0$ in $[0,+\infty)$ and $~\displaystyle{\liminf_{x\rightarrow+\infty}v_{0}(x)\geq\kappa_{2}}$, then $$\limsup_{t\rightarrow+\infty}\frac{h(t)}{t}\leq s_{*},$$
where $s_{*}$ is the minimal speed of the traveling waves to the problem related with \eqref{Eq_ourmodel} in the entire space. This estimation of the spreading speed is independent of $\mu$.
\end{theorem}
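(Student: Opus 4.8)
The plan is to prove that $\limsup_{t\to\infty}h(t)/t\le s$ for every $s>s_*$ and then let $s\downarrow s_*$. Fix such an $s$. Since $s>s_*$, the whole-line competition system associated with \eqref{Eq_ourmodel} (same reaction terms, posed on $\mathbb{R}$) admits a monotone traveling wave $(\phi_s,\psi_s)(x-st)$ joining $(\kappa_1,0)$ at $\xi=-\infty$ to $(0,\kappa_2)$ at $\xi=+\infty$, with $\phi_s'<0<\psi_s'$ and $\phi_s(\xi)\to0$ exponentially as $\xi\to+\infty$. I would use $(\phi_s,\psi_s)$ to trap $(u,v)$: $u$ from above by $\phi_s$ and $v$ from below by $\psi_s$, suitably shifted. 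The hypotheses are tailored to the initial ordering: $u_0\le\kappa_1$ (and, after a brief time shift so that $u(t_0,\cdot)<\kappa_1$ strictly on the compact set $[0,h(t_0)]$ — which follows from the strong maximum principle since $v>0$) lets us pick $\xi_0$ large with $u(t_0,x)\le\phi_s(x-st_0-\xi_0)$ on $[0,\infty)$, while $v_0>0$ together with $\liminf_{x\to\infty}v_0(x)\ge\kappa_2$ (propagated to $v(t_0,\cdot)$, since $u\equiv0$ far from the free boundary, so $v$ there obeys a logistic equation) gives $v(t_0,x)\ge\psi_s(x-st_0-\xi_0)$ on $[0,\infty)$.

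\textbf{Comparison.} Because $(\phi_s,\psi_s)$ solves the PDE system exactly, the shifted pair $(\phi_s,\psi_s)(\cdot-st-\xi_0)$ is simultaneously a super/sub-solution for $(u,v)$ in the sense relevant to Lemma~\ref{le3.2} (the differential inequalities hold with equality), and the monotone structure of the competition system makes the comparison close; taking $\underline h\equiv0$, $\underline u\equiv0$, $\bar v\equiv M_2$ from Lemma~\ref{le2.2}, one obtains
\[
u(t,x)\le\phi_s(x-st-\xi_0)\qquad\text{for all }t\ge t_0,\ x\ge0,
\]
so $u$ is exponentially small ahead of the line $x=st+\xi_0$.

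\textbf{From the $u$-bound to the free boundary.} This alone does not bound $h(t)$, since $\phi_s>0$ everywhere; one must bring in the Stefan law $h'(t)=-\mu u_x(t,h(t))$. Since $h_\infty=+\infty$, the spreading dynamics (parabolic estimates near the moving boundary, together with $v(t,x)\to0$ as in Theorem~\ref{th4.1}) yield fixed $L>0$, $\eta>0$ and $T$ with $u(t,h(t)-L)\ge\eta$ for $t\ge T$. Combining with the $u$-bound, $\phi_s\big(h(t)-L-st-\xi_0\big)\ge\eta$ for $t\ge\max\{t_0,T\}$; as $\phi_s$ is decreasing with $\phi_s(\xi)\to0$ at $+\infty$, this forces $h(t)-st$ to stay bounded, hence $\limsup_{t\to\infty}h(t)/t\le s$. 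Letting $s\downarrow s_*$ gives the claim. Since $\phi_s,\psi_s,\xi_0$ and the front-region lower bound on $u$ are all chosen independently of $\mu$ — the Stefan condition entering only as an inequality that is automatically consistent — the estimate $s_*$ carries no $\mu$.

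\textbf{Main obstacle.} The delicate point is making the comparison legitimate at the free boundary: $\phi_s$ never vanishes, so it cannot be inserted verbatim into Lemma~\ref{le3.2}, which demands a supersolution equal to zero on its own free boundary. One must either (i) work with an $\bar h=\infty$ version of the comparison principle — extending $u$ by zero past $h(t)$, which is a generalized subsolution since the corner created there is convex — and then run the front estimate above, or (ii) genuinely construct a free-boundary supersolution $(\bar u,\bar h)$ from the wave profile plus a correction, with $\bar h(t)=st+o(t)$ and $\bar h'(t)\ge-\mu\bar u_x(t,\bar h(t))$ valid for all $\mu>0$. Either way this is the technical heart, carried out in \cite{twotaiwanren}; the two-component coupling — the $u$-super-solution needs $v$ pinned from below near $\kappa_2$, while the $v$-sub-solution needs $u$ pinned from above — is precisely what obliges one to close the two bounds simultaneously, as encoded in Lemma~\ref{le3.2}.
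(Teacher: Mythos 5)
First, note that the paper offers no proof of Theorem~\ref{th6.1}: it is imported verbatim from \cite{twotaiwanren} (``Following an idea in \cite{twotaiwanren}, one can obtain\ldots''), so there is nothing internal to compare against. Your overall strategy --- compare $(u,v)$ with a shifted traveling wave $(\phi_s,\psi_s)(x-st-\xi_0)$ of the whole-space problem for each $s>s_*$, observe that the wave is $\mu$-independent, and let $s\downarrow s_*$ --- is indeed the idea behind the cited result. But two of your steps are genuine gaps. (1) The initial ordering for $v$: the hypothesis $\liminf_{x\to\infty}v_0(x)\geq\kappa_2$ does \emph{not} by itself give $v_0(x)\geq\psi_s(x-\xi_0)$ for some shift, because $\psi_s(\xi)\uparrow\kappa_2$ as $\xi\to+\infty$ while $v_0$ may approach $\kappa_2$ from below at a slower rate; for every $\xi_0$ the two tails compete and the pointwise inequality can fail. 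One must either replace $\psi_s$ by a perturbed subsolution of the form $(1-\delta e^{-\gamma t})\psi_s(\cdot)$ (absorbing the deficit into the exponentially decaying factor), or first run the equation for a finite time to force $v\geq\kappa_2-\varepsilon$ ahead of the front, where $u\equiv 0$ and $v$ obeys a logistic equation. Your parenthetical remark gestures at the second route but does not close it, since $\kappa_2-\varepsilon$ still does not dominate $\psi_s$ near $\xi=+\infty$.

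(2) More seriously, your final step rests on the claim that spreading yields fixed $L,\eta>0$ with $u(t,h(t)-L)\geq\eta$ for all large $t$. This is a uniform positive lower bound at a bounded distance behind a front that escapes every compact set; Theorem~\ref{th4.1} (convergence to $\kappa_1$ on compacta) does not give it, and establishing it is essentially as hard as the convergence of the solution to a semi-wave profile near the front --- i.e., comparable in depth to the theorem you are proving. The standard, and much cheaper, way to convert ``$u$ is small ahead of $x=st$'' into a bound on $h$ is to rerun the localized barrier from the proof of Lemma~\ref{le2.2}(iii) with $M_1$ replaced by $\varepsilon$: once $h(t)>st+M^{-1}$, the parabola $\omega_1=\varepsilon\bigl[2M(h(t)-x)-M^2(h(t)-x)^2\bigr]$ sits entirely in the region where $u\leq\varepsilon$, dominates $u$ there, and yields $h'(t)\leq 2\mu M\varepsilon<s$ for $\varepsilon$ small. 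Hence $h(t)$ can never outrun $st+M^{-1}+O(1)$, giving $\limsup_{t\to\infty}h(t)/t\leq s$ directly. (This also exposes a cosmetic wrinkle: the bound is not literally ``independent of $\mu$'' in the intermediate step, but the threshold $\varepsilon<s/(2\mu M)$ can be met for every fixed $\mu$, so the final constant $s_*$ is.) I would recommend restructuring the last paragraph around this barrier argument rather than the unproven front lower bound.
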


 However, in the fitness benefit case, we can derive an estimate better than the one in Theorem \ref{the speed}.   We first recall Proposition 5.1 of~ \cite{dusupandinf}.
\begin{proposition}[\cite{dusupandinf}]\label{pr4.1}
For any given constants $d_{1}>0$, $b_{1}>0$, $\delta_{1}>0$ and $\beta\in [0,2\sqrt{b_{1}d_{1}})$, the problem
\begin{equation}\label{speed1}\begin{array}{l}
-d_{1}U^{''}+\beta U^{'}=b_{1} U-\delta_{1} U^{2}~ \text{ for }~x\in (0,\infty),\vspace{7 pt}\\
U(0)=0,
\end{array}
\end{equation}
admits a unique positive solution $U=U_{\beta}$, which depends on $d_{1},b_{1},\delta_{1},\beta$, and satisfies $U_{\beta}(x)\rightarrow \kappa_{1}$ as $x\rightarrow +\infty$. Moreover, $U^{'}(x)>0$ for $x\geq0,$ and for each $\mu>0$, there exists a unique $\beta_{0}=\beta_{0}(\mu,d_{1},b_{1},\delta_{1})\in(0,2\sqrt{b_{1}d_{1}})$ such that
$\mu U^{'}_{\beta_{0}}(0)=\beta_{0}$.
\end{proposition}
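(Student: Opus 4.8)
The plan is a phase–plane analysis of \eqref{speed1}, written as $d_1U''=\beta U'-f(U)$ with $f(U):=b_1U-\delta_1U^2=\delta_1U(\kappa_1-U)$, which is positive on $(0,\kappa_1)$. On any arc where $q:=U'>0$ one may view $q$ as a function $Q(U)$ of $U$, and then $Q$ solves the first–order equation $\tfrac{d_1}{2}(Q^2)'=\beta Q-f(U)$ (here $'=d/dU$). The flow has the two equilibria $E_0=(0,0)$ and $E_1=(\kappa_1,0)$: the Jacobian at $E_1$ has determinant $-b_1/d_1<0$, so $E_1$ is a saddle with stable eigenvalue $\lambda_-=\bigl(\beta-\sqrt{\beta^2+4b_1d_1}\bigr)/(2d_1)<0$, while at $E_0$ the characteristic roots have discriminant $(\beta^2-4b_1d_1)/d_1^2<0$ because $\beta<2\sqrt{b_1d_1}$, so $E_0$ is a center (when $\beta=0$) or a weak unstable focus (when $\beta>0$); in particular no orbit approaches $E_0$ monotonically. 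This last fact is exactly where the hypothesis $\beta<2\sqrt{b_1d_1}$ will be used.

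For existence I would solve the first–order equation \emph{backward} from $U=\kappa_1$. Near the singular point $U=\kappa_1$ (the saddle $E_1$) a standard local analysis — equivalently, the local stable manifold of $E_1$ — gives a unique solution $Q=Q_\beta$ with $Q_\beta(\kappa_1^-)=0$ and $Q_\beta(U)=|\lambda_-(\beta)|\,(\kappa_1-U)+o(\kappa_1-U)>0$, where $|\lambda_-(\beta)|=\bigl(\sqrt{\beta^2+4b_1d_1}-\beta\bigr)/(2d_1)$. The function $Q_\beta^2$ is $C^1$ and cannot vanish on $(0,\kappa_1)$: a zero at some $U^{*}\in(0,\kappa_1)$ would be a local minimum of the nonnegative function $Q_\beta^2$, hence $(Q_\beta^2)'(U^{*})=0$, contradicting $(Q_\beta^2)'(U^{*})=-\tfrac{2}{d_1}f(U^{*})<0$. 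The conserved/monotone ``energy'' $\tfrac{d_1}{2}Q_\beta^2+V(U)$, $V(U)=\tfrac{b_1}{2}U^2-\tfrac{\delta_1}{3}U^3$, stays $\le V(\kappa_1)$, so $Q_\beta$ is bounded and extends up to $U=0$; and $Q_\beta(0^+)>0$, since $Q_\beta(0^+)=0$ would force the orbit to limit onto $E_0$ along $q>0$, impossible as $E_0$ is a center/focus rather than a node (again $\beta<2\sqrt{b_1d_1}$). Integrating $U'=Q_\beta(U)$ from $U(0)=0$ now produces $U_\beta$ on $[0,\infty)$ with $U_\beta(0)=0$, $U_\beta'=Q_\beta(U_\beta)>0$, $U_\beta'(0)=Q_\beta(0)>0$, and $U_\beta(x)\to\kappa_1$ (the approach being infinitely slow because $Q_\beta$ vanishes linearly at $\kappa_1$). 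For uniqueness, a bounded positive solution of \eqref{speed1} must have $U'>0$ on $[0,\infty)$ (interior critical points are ruled out by the sign of $U''$ there together with $U(0)=0$ and the energy inequality), so it yields a solution of the same first–order problem and therefore coincides with $U_\beta$; boundedness also forces the limit $\kappa_1$.

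It then remains to study $\mu\mapsto\beta_0$ through the function $\beta\mapsto U_\beta'(0)=Q_\beta(0)$. When $\beta=0$ the energy is conserved, $\tfrac{d_1}{2}Q_0(U)^2=V(\kappa_1)-V(U)$, so $U_0'(0)=\sqrt{2V(\kappa_1)/d_1}=\kappa_1\sqrt{b_1/(3d_1)}>0$. As $\beta\uparrow2\sqrt{b_1d_1}$ the graph $Q_\beta$ degenerates into the minimal monotone travelling wave, which joins $E_1$ to $E_0$ and hence meets $\{U=0\}$ only at ``$x=-\infty$''; by continuous dependence of the stable manifold on $\beta$ this gives $Q_\beta(0)\to0$. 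Finally $\beta\mapsto U_\beta'(0)$ is strictly decreasing: $|\lambda_-(\beta)|$ is strictly decreasing in $\beta$, so $Q_{\beta_1}>Q_{\beta_2}$ near $\kappa_1$ whenever $\beta_1<\beta_2$; if the two graphs first coincided at the largest $U^{*}\in[0,\kappa_1)$ with $Q_{\beta_1}(U^{*})=Q_{\beta_2}(U^{*})=:q^{*}>0$, then $(Q_{\beta_1}-Q_{\beta_2})'(U^{*})\ge0$, whence $(Q_{\beta_1}^2)'(U^{*})\ge(Q_{\beta_2}^2)'(U^{*})$, i.e.\ $\tfrac{2}{d_1}(\beta_1-\beta_2)q^{*}\ge0$ — impossible. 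Hence $Q_{\beta_1}>Q_{\beta_2}$ on all of $[0,\kappa_1)$, in particular $U_{\beta_1}'(0)>U_{\beta_2}'(0)$.

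With these ingredients the last assertion is immediate: $g(\beta):=\mu U_\beta'(0)-\beta$ is continuous and strictly decreasing on $[0,2\sqrt{b_1d_1})$, with $g(0)=\mu\kappa_1\sqrt{b_1/(3d_1)}>0$ and $g(\beta)\to-2\sqrt{b_1d_1}<0$ as $\beta\uparrow2\sqrt{b_1d_1}$, so $g$ has a unique zero $\beta_0\in(0,2\sqrt{b_1d_1})$, depending only on $\mu,d_1,b_1,\delta_1$. I expect the main obstacle to be the claim $Q_\beta(0)>0$ — i.e.\ that the stable manifold of $E_1$ actually reaches the line $\{U=0\}$ transversally at a finite parameter value instead of spiralling toward the origin — which is precisely the point where $\beta<2\sqrt{b_1d_1}$ is indispensable; the limiting identity $U_\beta'(0)\to0$ as $\beta\uparrow2\sqrt{b_1d_1}$, needing continuity of the stable manifold up to the endpoint of the admissible range, is the other delicate ingredient.
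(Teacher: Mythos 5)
The paper does not prove this proposition: it is quoted verbatim from \cite{dusupandinf} (where it appears as Proposition~5.1, itself resting on the semi-wave analysis of Du--Lin and Bunting--Du--Krakowski). Your phase-plane reconstruction — reducing to the first-order equation $\tfrac{d_1}{2}(Q^2)'=\beta Q-f(U)$ along the stable manifold of the saddle $(\kappa_1,0)$, ruling out an interior zero of $Q$ by the sign of $(Q^2)'$, using the complex eigenvalues at the origin (i.e.\ $\beta<2\sqrt{b_1d_1}$) to force $Q_\beta(0)>0$, and then obtaining $\beta_0$ from the strict monotonicity and the limits of $\beta\mapsto\mu U_\beta'(0)-\beta$ — is precisely the argument used in that reference, and I find no error in it. The two places you flag yourself are indeed where the remaining work lies, and both are standard: uniqueness among \emph{all} positive solutions requires actually excluding non-monotone and unbounded solutions (a first critical point in $(0,\kappa_1)$ is a strict maximum after which a second critical point would need $f\le 0$, while unbounded solutions blow up in finite time), and the limit $U_\beta'(0)\to 0$ as $\beta\uparrow 2\sqrt{b_1d_1}$ uses that the monotone family $Q_\beta$ decreases to the minimal-speed KPP front, which connects the saddle to the origin.
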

Our result reads:
\begin{theorem}\label{thspeed4}
Assume $\kappa_1>\kappa_2$. If $h_{\infty}=+\infty$, then
$$\beta_{0}(\mu,\kappa_{1}-\kappa_{2},d_{1})\leq \liminf_{t\rightarrow+\infty}\frac{h(t)}{t}\leq\limsup_{t\rightarrow+\infty}\frac{h(t)}{t}\leq\beta_{0}(\mu,b_{1}, \delta_{1},d_{1}),$$
where $\beta_0$ is determined by Proposition \ref{pr4.1}.
\end{theorem}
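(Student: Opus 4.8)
The plan is to squeeze $h(t)$ between two genuine free-boundary problems of the scalar Fisher--KPP type for which the asymptotic spreading speed is exactly the $\beta_0$ produced by Proposition~\ref{pr4.1}. For the upper bound, I would use the fact (established in the proof of Theorem~\ref{th4.1}, or directly via the logistic ODE comparison in its first lines) that $\limsup_{t\to\infty} v(t,x)\le\kappa_2$ uniformly in $x$, so in particular $v\ge 0$ everywhere; hence the first equation in \eqref{Eq_ourmodel} gives $u_t\le d_1u_{xx}+u(b_1-\delta_1 u)$ on $0<x<h(t)$, together with $u_x(t,0)=0$, $u(t,h(t))=0$ and $h'(t)=-\mu u_x(t,h(t))$. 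Let $(\bar u,\bar h)$ solve the corresponding scalar free-boundary problem with the same data; by Lemma~\ref{le3.2} (or rather its scalar analogue, which is the classical comparison for one-phase Stefan problems) $h(t)\le\bar h(t)$ for all $t$. Since $h_\infty=+\infty$ forces $\bar h_\infty=+\infty$ as well, the known spreading-speed result for the scalar problem (Du--Lou / Du--Guo, quoted through Proposition~\ref{pr4.1}) yields $\bar h(t)/t\to\beta_0(\mu,b_1,\delta_1,d_1)$, whence $\limsup_{t\to\infty}h(t)/t\le\beta_0(\mu,b_1,\delta_1,d_1)$.

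For the lower bound, I would run the argument of Theorem~\ref{th4.1} one more step: since $\kappa_1>\kappa_2$ and (as shown there) $\liminf_{t\to\infty}u(t,x)>0$ on compacts together with $\limsup_{t\to\infty}v(t,x)\le\kappa_2$, we have for every $\varepsilon>0$ a time $T_\varepsilon$ after which $v(t,x)\le\kappa_2+\varepsilon$ for all $x\ge 0$. Then on $0<x<h(t)$, $t>T_\varepsilon$,
\begin{equation*}
u_t\ge d_1u_{xx}+\delta_1 u\bigl(\kappa_1-\kappa_2-\varepsilon-u\bigr),
\end{equation*}
with $u_x(t,0)=0$, $u(t,h(t))=0$, $h'(t)=-\mu u_x(t,h(t))$, and $u(T_\varepsilon,\cdot)>0$ on $(0,h(T_\varepsilon))$. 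Comparing with the scalar free-boundary problem having growth rate $\delta_1(\kappa_1-\kappa_2-\varepsilon)$, the same data at $t=T_\varepsilon$, and a \emph{smaller} initial interval if needed, Lemma~\ref{le3.2} gives $h(t)\ge\underline h(t)$, where $\underline h$ is the free boundary of that scalar problem. Because $h_\infty=+\infty$ and $h(T_\varepsilon)>0$, spreading occurs for the scalar problem too (or, if the prescribed initial length is below the critical value, one first waits until $h(t)$ exceeds it, which happens in finite time since $h_\infty=+\infty$), so $\underline h(t)/t\to\beta_0(\mu,\delta_1(\kappa_1-\kappa_2-\varepsilon),1,d_1)=\beta_0(\mu,\kappa_1-\kappa_2-\varepsilon,d_1)$ in the notation of the statement. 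Letting $\varepsilon\to0$ and using the continuity of $\beta_0$ in its parameters (which follows from the implicit-function characterization $\mu U'_{\beta_0}(0)=\beta_0$ in Proposition~\ref{pr4.1}) yields $\liminf_{t\to\infty}h(t)/t\ge\beta_0(\mu,\kappa_1-\kappa_2,d_1)$.

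I expect the main obstacle to be two bookkeeping points rather than anything deep. First, one must make sure the scalar comparison problems are legitimately comparable to \eqref{Eq_ourmodel}: the comparison Lemma~\ref{le3.2} is stated for the full system, so I would either invoke its scalar specialization (taking $\bar v$ or $\underline v$ to be the relevant constant and dropping the coupling, which only weakens the inequalities in the right direction) or cite the classical one-phase Stefan comparison principle directly. Second, for the lower bound the initial length $h(T_\varepsilon)$ may be below the critical length $\tfrac{\pi}{2}\sqrt{d_1/(\delta_1(\kappa_1-\kappa_2-\varepsilon))}$ for the perturbed scalar problem, in which case one cannot immediately conclude spreading for $\underline h$; the fix is to note that $h_\infty=+\infty$ means $h(t)$ eventually exceeds any fixed length, so one restarts the comparison from a later time $T'_\varepsilon$ at which $h(T'_\varepsilon)$ is already supercritical. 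With these caveats handled, the continuity of $\beta_0(\mu,\cdot,\cdot,d_1)$ in the growth parameter—clear from Proposition~\ref{pr4.1}—closes the $\varepsilon$-limit and completes the proof.
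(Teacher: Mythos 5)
Your proposal follows essentially the same route as the paper: bound $u$ above by the scalar logistic free-boundary problem with rate $b_{1}-\delta_{1}u$ and below (after time $T_{\varepsilon}$, once $v\leq\kappa_{2}+\varepsilon$ and $h(T_{\varepsilon})$ is supercritical) by the one with rate $\kappa_{1}-\kappa_{2}-\varepsilon$, then invoke the Du--Lin spreading-speed result for each and let $\varepsilon\to0$. Your two ``bookkeeping'' caveats (restarting the comparison once $h(t)$ exceeds the critical length, and continuity of $\beta_{0}$ in the growth parameter) are exactly the points the paper handles implicitly, so the argument is correct and matches the published proof.
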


\begin{proof} Note that \begin{equation}\label{eqspeed4.6111}
\left\{\begin{array}{ll}
\displaystyle{\frac{\partial u}{\partial t}- d_{1}u_{xx}=u[b_{1}-\delta_{1}(u+v)]\leq u(b_{1}-\delta_{1}u),}&t>0, ~~ 0<x<h(t), \vspace{7 pt}\\
u_{x}(t,0)=0,u(t,h(t))=0,&t>0,\vspace{7 pt}\\
h'(t)=-\mu u_{x}(t,h(t)),&t>0,\vspace{7 pt}\\
u(0,x)=u_{0}(x),&x\in[0,h_{0}].
\end{array}\right.
\end{equation}
Thus, the pair $(u, h)$ is a subsolution to the problem
\begin{equation}\label{eqspeed4.6222}
\left\{\begin{array}{ll}
\displaystyle{\frac{\partial \bar{u}}{\partial t}- d_{1}\bar{u}_{xx}= \bar{u}(b_{1}-\delta_{1}\bar{u}),}&t>0, ~~ 0<x<\bar{h}(t), \vspace{7 pt}\\
\bar{u}_{x}(t,0)=0,\bar{u}(t,\bar{h}(t))=0,&t>0,\vspace{7 pt}\\
\bar{h}'(t)=-\mu \bar{u}_{x}(t,\bar{h}(t)),&t>0,\vspace{7 pt}\\
\bar{u}(0,x)=u_{0}(x),\bar{h}(0)=h_{0},&x\in[0,h_{0}].
\end{array}\right.
\end{equation}
 By the comparison principle, $h(t)\leq \bar{h}(t)$ for $t>0$.  Theorem 4.2 of  \cite{duyihongshou} yields that  $$\displaystyle{\lim_{t\rightarrow+\infty}\frac{\bar{h}(t)}{t}=\beta_{0}(\mu,b_{1}, \delta_{1},d_{1})}.$$ Hence $$\limsup_{t\rightarrow+\infty}\frac{h(t)}{t}\leq\beta_{0}(\mu,b_{1}, \delta_{1},d_{1}).$$

 \noindent Note that $\displaystyle{\limsup_{t\rightarrow+\infty}v(t,x)\leq \kappa_{2}}$ uniformly for $x\in [0,+\infty)$ and $h_{\infty}=+\infty.$ Then, there exists $T_{\varepsilon}>0$ such that $v(t,x)\leq\kappa_{2}+\varepsilon$ and $$\displaystyle{h(T_{\varepsilon})>\frac{\pi}{2}\sqrt{\frac{d_{1}}{\kappa_{1}-\kappa_{2}-\varepsilon}}}~ \text{  when $t>T_{\varepsilon}$.}$$
Next, we consider the following problem
\begin{equation}\label{eqspeed4.6333}
\left\{\begin{array}{ll}
\displaystyle{\frac{\partial \underline{u}}{\partial t}-d_{1}\frac{\partial^{2}\underline{u}}{\partial x^{2}}= \underline{u}(\kappa_{1}-\kappa_{2}-\varepsilon-\underline{u}),}&t>T_{\varepsilon},~~  0<x<\underline{h}(t),\vspace{7 pt} \\
\underline{u}_{x}(t,0)=0,\underline{u}(t,\underline{h}(t))=0,&t>T_{\varepsilon},\vspace{7 pt}\\
\underline{h}'(t)=-\mu \underline{u}_{x}(t,\underline{h}(t)),&t>T_{\varepsilon},\vspace{7 pt}\\
\underline{u}(T_{\varepsilon},x)=u(T_{\varepsilon},x),&x\in[0,h(T_{\varepsilon})].
\end{array}\right.
\end{equation}
By the  comparison principle, we obtain $h(t)\geq\underline{h}(t)$ for $t>T_{\varepsilon}$. From Theorem \ref{th4.3}, we know that $\underline{h}(\infty)=+\infty$. Using a similar argument as above, we have $\displaystyle{\lim_{t\rightarrow+\infty}\frac{\underline{h}(t)}{t}=\beta_{0}(\mu,\kappa_{1}-\kappa_{2},d_{1})}$. Therefore,
$$\liminf_{t\rightarrow+\infty}\frac{h(t)}{t}\geq\beta_{0}(\mu,\kappa_{1}-\kappa_{2},d_{1}).$$
\end{proof}

%%%%%%%%%%%%%%%%%%%%%%%%%%%%%%%%%%%%
\section{The free boundary problem with a heterogeneous birth rate}\label{Section4}
In this section, we consider the  free boundary problem \eqref{Eq_ourmodel-two}-\eqref{eq initial functions} with the heterogeneous birth rates $b_1(x)$ and $b_2(x)$.

\subsection{Some useful lemmas}
In this subsection, we first study a related eigenvalue problem:

\begin{equation}\label{Eq_2-the main principal eigenvalue}
\left\{\begin{array}{ll}
d\phi_{xx}+b(x)\phi+\lambda\phi=0,&x\in(0,h_{0}),\vspace{7 pt}\\
\phi_{x}(0)=\phi(h_{0})=0.
\end{array}\right.
\end{equation}
Problem \eqref{Eq_2-the main principal eigenvalue} admits a positive principal eigenvalue $\lambda_{1}$ determined by
\begin{equation}\label{by-variational-method}
\lambda_{1}=\inf_{\phi\in W^{1,2}((0,h_{0}))}\left\{\int^{h_{0}}_{0}[d\phi^{2}_{x}-b(x)\phi^{2}]dx,~\phi_{x}(0)=\phi(h_{0})=0,\int^{h_{0}}_{0}\phi^{2}dx=1\right\}.
\end{equation}
We state two hypotheses that we refer to  when needed. We use a generic symbol $B(x)$ in the statement of the hypotheses.  The function $B(x)$ will be replaced accordingly (by $b,$ $b_1$ or $b_2$) in the rest of this Section.
\begin{equation}
\tag{\bf $B_{2}$}\label{B3}  B(x)\in C^{1}\left([0,+\infty)\right)\cap L^{\infty}\left([0,+\infty)\right) \text{ and }B(x)\text{ is positive somewhere in}~(0,h_{0}).
\end{equation}
\begin{equation}\tag{\bf $B_{3}$}\label{B4}
\left\| \begin{array}{l}
 B(x)\in C^{1}\left([0,+\infty)\right)\text{ and } \underline{b}<B(x)<\bar{b} \text{ for all } x\in[0,+\infty), \\
 \text{where } \underline{b} \text{ and }\bar{b} \text{ are two positive constants.}
\end{array}\right.
\end{equation}

\begin{remark}
 In order to compare the principal eigenvalues $\lambda_{1}$ associated with different parameters, we denote the principal eigenvalue $\lambda_{1}$ by $\lambda_{1}(d,h_{0})$. When we fix $h_{0}$ and study the property of $\lambda_{1}$ as $d$ varies, we write $\lambda_{1}=\lambda_{1}(d)$. Similarly, we write $\lambda_{1}=\lambda_{1}(h_{0})$ when  $d$ is fixed while $h_{0}$ varies.
\end{remark}
We gather the following known results about the dependance of $\lambda_1$ on $d$ and $h$.
\begin{lemma}[\cite{zhouxiaowenzi}]\label{the case about d}
Suppose that $b(x)$ satisfies \eqref{B3}, where $B(x)$ is replaced by $b(x)$. Then, $\lambda_{1}=\lambda_{1}(d)$ has the following properties:
\begin{enumerate}
  \item [(i)] $\lambda_{1}(d)$ is increasing with respect to $d$.\\
  \item [(ii)] $\lambda_{1}(d)\rightarrow+\infty$ as $d\rightarrow+\infty$ and $\lambda_{1}(d)\rightarrow -\displaystyle{\max_{x\in[0,l]}b(x)<0}$ as $d\rightarrow 0$.\\
  \item [(iii)] For any fixed $h_{0}>0$, there exists $d=d^{*}>0$ such that \\ \begin{itemize}
  \item$\lambda_{1}(d)<0$ for $0<d<d^{*}$, \\ \item $\lambda_{1}(d)>0$ for $d>d^{*}$, and\\ \item$\lambda_{1}(d)=0$ for $d=d^{*}$.\end{itemize}
 \end{enumerate}
\end{lemma}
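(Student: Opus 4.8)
\textbf{Proof proposal for Lemma \ref{the case about d}.}

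The plan is to work from the variational characterization \eqref{by-variational-method} of the principal eigenvalue $\lambda_1(d)$ and treat the three assertions in turn. First I would establish (i): fix $h_0$ and let $0<d_1<d_2$. For any admissible $\phi\in W^{1,2}((0,h_0))$ with $\phi_x(0)=\phi(h_0)=0$ and $\int_0^{h_0}\phi^2\,dx=1$, one has
\[
\int_0^{h_0}\bigl[d_1\phi_x^2-b(x)\phi^2\bigr]\,dx\;\le\;\int_0^{h_0}\bigl[d_2\phi_x^2-b(x)\phi^2\bigr]\,dx,
\]
since $\phi_x^2\ge 0$; taking the infimum over all such $\phi$ gives $\lambda_1(d_1)\le\lambda_1(d_2)$. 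To upgrade this to strict monotonicity, I would use that the infimum in \eqref{by-variational-method} is attained by the (positive) principal eigenfunction $\phi_{d_2}$, which cannot be constant since it must vanish at $h_0$ while being positive inside; hence $\int_0^{h_0}(\phi_{d_2})_x^2\,dx>0$, and plugging $\phi_{d_2}$ into the Rayleigh quotient for $d_1$ gives $\lambda_1(d_1)\le\int_0^{h_0}[d_1(\phi_{d_2})_x^2-b\phi_{d_2}^2]\,dx<\int_0^{h_0}[d_2(\phi_{d_2})_x^2-b\phi_{d_2}^2]\,dx=\lambda_1(d_2)$.

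For (ii), the limit as $d\to+\infty$ follows by choosing in \eqref{by-variational-method} a fixed test function with nonzero derivative (e.g.\ $\phi(x)=c\cos(\pi x/(2h_0))$ normalized in $L^2$): then $\lambda_1(d)\le d\int_0^{h_0}\phi_x^2\,dx-\int_0^{h_0}b\phi^2\,dx\to$? — wait, that gives an upper bound going to $+\infty$, which is the wrong direction, so instead I would argue the lower bound: $\lambda_1(d)\ge d\lambda_1^{N}-\|b\|_\infty$ where $\lambda_1^N=\pi^2/(4h_0^2)>0$ is the first eigenvalue of $-\phi_{xx}$ under the same mixed boundary conditions (this uses that $\int_0^{h_0}\phi_x^2\,dx\ge\lambda_1^N\int_0^{h_0}\phi^2\,dx$ for admissible $\phi$), hence $\lambda_1(d)\to+\infty$. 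For the limit as $d\to 0^+$, I would show $\lambda_1(d)\to-\max_{[0,h_0]}b(x)$: the lower bound $\lambda_1(d)\ge-\max b$ is immediate from dropping the $d\phi_x^2$ term; for the matching upper bound one constructs, for each small $\eta>0$, a test function concentrated near a point where $b$ is within $\eta$ of its maximum (a suitably scaled bump, cut off to satisfy the boundary conditions), for which the gradient term is $O(d\cdot\eta^{-2})$ and the potential term is close to $-\max b$, then lets $d\to 0$ and $\eta\to 0$ appropriately.

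For (iii), combine (i) and (ii): $\lambda_1(d)$ is continuous (it depends continuously on $d$, e.g.\ by the variational formula and a standard argument, or by continuity of the principal eigenvalue under coefficient perturbation) and strictly increasing, with $\lambda_1(d)\to-\max b<0$ as $d\to0^+$ — here I use hypothesis \eqref{B3}, under which $b$ is bounded below by a positive constant, so $\max b>0$ and the limit is genuinely negative — and $\lambda_1(d)\to+\infty$ as $d\to+\infty$. By the intermediate value theorem there is a unique $d^*>0$ with $\lambda_1(d^*)=0$, and strict monotonicity forces $\lambda_1(d)<0$ for $d<d^*$ and $\lambda_1(d)>0$ for $d>d^*$. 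I expect the main obstacle to be the $d\to 0^+$ limit in (ii): the lower bound is trivial but the matching upper bound requires carefully building an admissible test function (respecting $\phi_x(0)=0$, $\phi(h_0)=0$) that concentrates mass near the argmax of $b$ while keeping the scaled Dirichlet energy $d\int\phi_x^2$ negligible — a routine but slightly delicate construction. Since the lemma is quoted from \cite{zhouxiaowenzi}, I would in practice cite that reference for the full details and only sketch the variational arguments above.
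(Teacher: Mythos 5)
Your proposal is correct in substance, but note that the paper itself supplies no proof of Lemma \ref{the case about d}: it is imported verbatim from \cite{zhouxiaowenzi}, so there is nothing internal to compare against. Your variational reconstruction is the standard route and matches what that reference does: monotonicity and strictness in (i) via the Rayleigh quotient and the non-constancy of the principal eigenfunction; the lower bound $\lambda_1(d)\ge d\,\pi^2/(4h_0^2)-\|b\|_{L^\infty}$ for $d\to+\infty$; a concentrating test function near the argmax of $b$ for $d\to0^+$; and the intermediate value theorem plus strict monotonicity for (iii) (continuity of $\lambda_1(d)$ is immediate since \eqref{by-variational-method} exhibits it as an infimum of affine functions of $d$, hence concave). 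Two small points to clean up. First, in (ii) you wrote the wrong-direction upper bound before correcting yourself mid-sentence; only the Poincar\'e-type lower bound should appear. Second, in (iii) you justify $\max b>0$ by claiming \eqref{B3} makes $b$ bounded below by a positive constant — that is hypothesis \eqref{B4}, not \eqref{B3}; the latter only asserts $b$ is positive \emph{somewhere} in $(0,h_0)$. Fortunately that weaker statement already gives $\max_{[0,h_0]}b>0$, which is all the argument needs, so the slip is cosmetic. (The ``$\max_{x\in[0,l]}$'' in the lemma statement is the paper's own notational leftover for $[0,h_0]$ and not your error.)
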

\begin{lemma}[\cite{zhouxiaowenzi}]\label{the case about l}
 Assume that \eqref{B4} holds, where $B(x)$ is replaced by $b(x)$. Then, $\lambda_{1}=\lambda_{1}(h_{0})$ has the following properties:
\begin{enumerate}
  \item [(i)] $\lambda_{1}(h_{0})$ is monotone decreasing with respect to $h_{0}$.\\
  \item [(ii)] $\lambda_{1}(h_{0})\rightarrow+\infty$ as $h_{0}\rightarrow0$ and $\displaystyle{\lim_{h_{0}\rightarrow+\infty}\lambda_{1}(h_{0})<0}$.\\
  \item [(iii)] For any fixed $d>0$, there exists $h_{0}=h^{*}_{0}>0$ such that \\

  \begin{itemize}\item $\lambda_{1}(h_{0})>0$ for $0<h_{0}<h^{*}_{0}$, \\
  \item $\lambda_{1}(h_{0})<0$ for $h_{0}>h^{*}_{0},$ \\
  \item $\lambda_{1}(h_{0})=0$ for $h_{0}=h^{*}_{0}$.
  \end{itemize}
 \end{enumerate}
\end{lemma}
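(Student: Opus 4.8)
The plan is to establish each of the three properties in Lemma \ref{the case about l} by exploiting the variational characterization \eqref{by-variational-method} together with a monotone scaling argument, mimicking the classical Faber--Krahn type analysis for the principal eigenvalue of an elliptic operator on an interval. Throughout, I would write $\mathcal{R}_{h_0}[\phi]=\int_0^{h_0}[d\phi_x^2-b(x)\phi^2]\,dx$ for the Rayleigh quotient numerator (with the $L^2$-normalization $\int_0^{h_0}\phi^2=1$), and I would repeatedly use that the infimum in \eqref{by-variational-method} is attained by a positive eigenfunction $\phi_1$ solving \eqref{Eq_2-the main principal eigenvalue}; this follows from standard Sturm--Liouville theory since the admissible space is compactly embedded in $L^2$.

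For part (i), the natural approach is a change of variables that transplants a test function on the shorter interval to the longer one. Given $0<h_1<h_2$ and the normalized positive eigenfunction $\phi^{(2)}$ for $\lambda_1(h_2)$, I would \emph{not} simply restrict it (the Dirichlet condition sits at the wrong endpoint); instead I would rescale, setting $\psi(x)=\phi^{(2)}(x h_2/h_1)$ for $x\in[0,h_1]$, so that $\psi$ satisfies the correct boundary conditions $\psi_x(0)=\psi(h_1)=0$. Plugging $\psi$ into \eqref{by-variational-method} for $h_1$ and using the substitution $y=xh_2/h_1$ gives, after accounting for the Jacobian, a diffusion term scaled up by $(h_2/h_1)^2>1$ and a potential term involving $b$ sampled on $[0,h_2]$; comparing with the Rayleigh quotient that computes $\lambda_1(h_2)$ and invoking the uniform bound $\underline b<b(x)<\bar b$ from \eqref{B4} to control the change in the potential integral, one obtains $\lambda_1(h_1)>\lambda_1(h_2)$. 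The strictness comes from the strict inequality $(h_2/h_1)^2>1$ acting on the strictly positive quantity $\int d\,|\psi_x|^2$ (which cannot vanish, as $\psi\not\equiv\text{const}$). The main obstacle here is bookkeeping the rescaling cleanly so that the $b$-dependent term is genuinely controlled; using \eqref{B4} to bound $-\int b\psi^2 \le -\underline b$ and $-\int b(\phi^{(2)})^2\ge -\bar b$ is the crude but sufficient device.

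For part (ii), the limit $h_0\to 0$ follows by noting that for any admissible $\phi$ on $(0,h_0)$, a Poincar\'e-type inequality on the interval of length $h_0$ gives $\int_0^{h_0}\phi_x^2\ge (\pi/2h_0)^2\int_0^{h_0}\phi^2$, so $\mathcal{R}_{h_0}[\phi]\ge d(\pi/2h_0)^2-\bar b$, whence $\lambda_1(h_0)\ge d(\pi/2h_0)^2-\bar b\to+\infty$. For $h_0\to+\infty$, I would exhibit an explicit test function supported on a fixed large window where $b$ exceeds a positive constant; since \eqref{B4} guarantees $b(x)>\underline b>0$ everywhere, choosing $\phi$ to be a suitably flattened bump on $[0,h_0]$ (for instance $\cos(\pi x/2h_0)$, properly normalized) makes the diffusion term $O(1/h_0^2)\to 0$ while the potential term stays bounded away from $0$ by $\underline b$ times a positive constant, forcing $\limsup_{h_0\to\infty}\lambda_1(h_0)\le -\underline b/\text{(const)}<0$; combined with monotonicity from part (i), the limit exists and is negative.

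Finally, part (iii) is an immediate consequence of (i) and (ii): $\lambda_1$ as a function of $h_0$ is continuous (continuous dependence of the principal eigenvalue on the domain length, again from Sturm--Liouville theory), strictly decreasing, tends to $+\infty$ at $0$ and to a negative limit at $+\infty$, so by the intermediate value theorem there is a unique $h_0^*>0$ with $\lambda_1(h_0^*)=0$, and strict monotonicity gives $\lambda_1(h_0)>0$ for $h_0<h_0^*$ and $\lambda_1(h_0)<0$ for $h_0>h_0^*$. I expect part (i) — specifically making the rescaling/Rayleigh-quotient comparison rigorous while tracking the potential term — to be the only genuinely delicate step; parts (ii) and (iii) are routine once the variational formula and the explicit test functions are in hand. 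Since the lemma is quoted from \cite{zhouxiaowenzi}, it would also be legitimate to simply cite that reference, but the sketch above is how I would reconstruct the argument.
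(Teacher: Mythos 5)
The paper itself offers no proof of this lemma --- it is quoted verbatim from \cite{zhouxiaowenzi} --- so the only basis for comparison is the standard argument in that reference, whose overall architecture (variational formula \eqref{by-variational-method}, limiting behaviour at $0$ and $+\infty$, continuity plus the intermediate value theorem for (iii)) your sketch does follow. Parts (ii) and (iii) of your proposal are correct as written: the Poincar\'e bound $\int_0^{h_0}\phi_x^2\geq(\pi/2h_0)^2\int_0^{h_0}\phi^2$ for the class $\{\phi\in H^1:\phi(h_0)=0\}$ gives $\lambda_1(h_0)\geq d(\pi/2h_0)^2-\bar b\to+\infty$, and the normalized test function $\sqrt{2/h_0}\cos(\pi x/2h_0)$ gives $\lambda_1(h_0)\leq d\pi^2/(4h_0^2)-\underline b$, so the limit at infinity is at most $-\underline b<0$.

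The genuine gap is in part (i). Your rescaling $\psi(x)=\phi^{(2)}(xh_2/h_1)$ produces, after the change of variables, a Rayleigh quotient equal to $(h_2/h_1)^2 d\int(\phi^{(2)}_y)^2\,dy-\int b(yh_1/h_2)(\phi^{(2)}(y))^2\,dy$, which must be compared with $\lambda_1(h_2)=d\int(\phi^{(2)}_y)^2\,dy-\int b(y)(\phi^{(2)}(y))^2\,dy$. The difference contains the term $\int\bigl[b(y)-b(yh_1/h_2)\bigr](\phi^{(2)})^2\,dy$, which has no sign for a heterogeneous $b$ and can be as negative as $-(\bar b-\underline b)$; the crude bounds from \eqref{B4} that you invoke go in the wrong direction and cannot be absorbed by the positive gain $[(h_2/h_1)^2-1]\,d\int(\phi^{(2)}_y)^2$, which is small when $h_2/h_1$ is close to $1$. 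So the rescaling does not prove monotonicity. The correct (and much simpler) device is the opposite of the restriction you rejected: for $h_1<h_2$, take any admissible $\phi$ for the $h_1$-problem and \emph{extend it by zero} to $(0,h_2)$. Because the Dirichlet condition sits at the right endpoint, the extension lies in $H^1(0,h_2)$, is admissible for the $h_2$-problem, and has identical Rayleigh numerator and $L^2$-norm; hence $\lambda_1(h_2)\leq\lambda_1(h_1)$ with no reference to the behaviour of $b$ at all. Strictness follows because a minimizer of \eqref{by-variational-method} is a positive solution of \eqref{Eq_2-the main principal eigenvalue} and therefore cannot vanish on the open set $(h_1,h_2)$. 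With part (i) repaired this way, your parts (ii) and (iii) go through unchanged.
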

\noindent For the reader's convenience, we also recall some facts related to the  following problem
\begin{equation}\label{Eq_ourmodelstationary1}
\left\{\begin{array}{ll}
\displaystyle{\frac{\partial v}{\partial t}=d_{2}v_{xx}+v\left(b_{2}(x)-\delta_{2}v\right),}&t>0, ~~ x>0,\vspace{7 pt}\\
v_{x}(t,0)=0, &t>0,\vspace{7 pt}
\\
v(0,x)=v_{0}(x), &x\in[0,+\infty).
\end{array}\right.
\end{equation}
The proof of the next lemma follows from Lemma 5.2 and Lemma 6.2 of \cite{zhouxiaowenzi}.
\begin{lemma}\label{the long time behavior}
Assume that $b_{2}(x)$ satisfies \eqref{B4}, where $B(x)$ is replaced by $b_2(x)$.  Let $v(t,x)$ be the unique solution of (\ref{Eq_ourmodelstationary1}) with an initial condition  $$v_{0}\in C^{2}[0,\infty) \cap {L}^{\infty}[0,\infty)\text{ and }v_{0}>0.$$ Then, $$\lim_{t\rightarrow+\infty}v(t,\cdot)=\phi_{v*}\text{ uniformly in any compact subset of } [0,\infty),$$ where $\phi_{v*}$ is the unique positive solution of the following elliptic problem
\begin{equation}\label{Eq_ourmodelstationary2}
\left\{\begin{array}{ll}
d_{2}v_{xx}+v\left(b_{2}(x)-\delta_{2}v)\right)=0,&x>0,\vspace{7 pt}\\
v_{x}(0)=0.
\end{array}\right.
\end{equation}
\end{lemma}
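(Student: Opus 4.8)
The plan is to sandwich $v(t,x)$ between solutions of auxiliary logistic problems posed on bounded intervals $(0,L)$, pass to the limit $L\to+\infty$, and use the scalar parabolic comparison principle together with the known convergence theory for the logistic equation on a bounded interval. First I would record the a priori bounds: since $b_{2}(x)<\bar b$ by \eqref{B4}, the constant $M_{2}=\max\{\bar b/\delta_{2},\,\|v_{0}\|_{L^{\infty}}\}$ is a supersolution, so $0<v(t,x)\le M_{2}$ (as in Lemma \ref{le2.2}(ii)), and comparison with the ODE $\bar v'=\bar v(\bar b-\delta_{2}\bar v)$ gives $\limsup_{t\to+\infty}v(t,x)\le \bar b/\delta_{2}$ uniformly in $x$.

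Next I would set up two monotone families of steady states on $(0,L)$. For $L>0$ consider
\begin{equation}\label{prop-lower-ss}
d_{2}w''+w\big(b_{2}(x)-\delta_{2}w\big)=0 \text{ in }(0,L),\qquad w'(0)=0,\ w(L)=0.
\end{equation}
Because $b_{2}(x)>\underline b>0$, the principal eigenvalue of the linearization at $w\equiv 0$ is bounded above by that of the constant-potential problem, namely $d_{2}(\pi/(2L))^{2}-\underline b$, which is negative for $L$ large (this is where \eqref{B4} and Lemma \ref{the case about l}, applied to $\underline b$, are used). Hence for all large $L$, \eqref{prop-lower-ss} has a unique positive solution $\underline w_{L}$ obtained by monotone iteration between $0$ and the supersolution $\bar b/\delta_{2}$; a comparison argument gives that $L\mapsto \underline w_{L}$ is increasing and bounded by $\bar b/\delta_{2}$, so by elliptic estimates $\underline w_{L}\uparrow \phi_{v*}$ locally uniformly, where $\phi_{v*}>0$ solves \eqref{Eq_ourmodelstationary2}. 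Replacing the condition $w(L)=0$ by $w(L)=M$ with $M\ge M_{2}$ produces a decreasing family $\overline w_{L}\ge \underline w_{L}$ of positive steady states; as $L\to+\infty$, $\overline w_{L}$ decreases to a bounded positive solution of \eqref{Eq_ourmodelstationary2}, which by uniqueness of the bounded positive steady state on the half-line (Lemma 5.2 of \cite{zhouxiaowenzi}) must also equal $\phi_{v*}$.

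Then I would squeeze $v$. Let $\underline v^{L}$ solve the parabolic logistic equation on $(0,L)$ with Neumann condition at $0$, Dirichlet condition $0$ at $L$, and a small positive initial datum below $v_{0}$; for large $L$ the convergence theory on a bounded interval (Lemma 6.2 of \cite{zhouxiaowenzi}) gives $\underline v^{L}(t,\cdot)\to \underline w_{L}$ uniformly on $[0,L]$, and comparison yields $v(t,x)\ge \underline v^{L}(t,x)$, hence $\liminf_{t\to+\infty}v(t,x)\ge \underline w_{L}(x)$; letting $L\to+\infty$ gives $\liminf_{t\to+\infty}v(t,x)\ge \phi_{v*}(x)$ locally uniformly. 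For the upper estimate, let $\overline v^{L}$ solve the same equation with Neumann at $0$, $\overline v^{L}(t,L)=M$ and initial datum $\ge v_{0}$; then $v\le \overline v^{L}$, $\overline v^{L}(t,\cdot)\to \overline w_{L}$, so $\limsup_{t\to+\infty}v(t,x)\le \overline w_{L}(x)$, and $L\to+\infty$ gives $\limsup_{t\to+\infty}v(t,x)\le \phi_{v*}(x)$ locally uniformly. Combining the two estimates proves $v(t,\cdot)\to \phi_{v*}$ uniformly on compact subsets of $[0,+\infty)$.

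The main obstacle is not the comparison bookkeeping but the elliptic squeeze: one must know that the increasing family $\underline w_{L}$ and the decreasing family $\overline w_{L}$ have the same limit, which is precisely the uniqueness of the bounded positive steady state $\phi_{v*}$ of \eqref{Eq_ourmodelstationary2} on the half-line. I would quote this (Lemma 5.2 of \cite{zhouxiaowenzi}) rather than reprove it; likewise the bounded-interval convergence statements $\underline v^{L}(t,\cdot)\to \underline w_{L}$ and $\overline v^{L}(t,\cdot)\to \overline w_{L}$ are classical and are the content of Lemma 6.2 of \cite{zhouxiaowenzi}. A secondary technical point, the negativity of the principal eigenvalue on large intervals, is handled directly from $b_{2}(x)>\underline b>0$ via Lemma \ref{the case about l}.
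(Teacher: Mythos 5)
Your proposal is correct and follows essentially the same route as the paper, which gives no argument of its own beyond the remark that the result ``follows from Lemma 5.2 and Lemma 6.2 of \cite{zhouxiaowenzi}''; your squeeze between minimal and maximal steady states on $(0,L)$, with $L\to+\infty$, is precisely the standard expansion of that citation and bottoms out in the same two ingredients (uniqueness of the bounded positive steady state on the half-line, and bounded-interval convergence). The only detail worth tightening is the upper barrier: to get $\overline v^{L}(t,\cdot)\to\overline w_{L}$ you should take the constant $M\ge M_{2}$ itself as initial datum, so that $\overline v^{L}$ is monotone decreasing in $t$ and converges to the maximal steady state.
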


\subsection{Sharp criteria for spreading and vanishing}\label{inhom_criteria}

 Let us first consider the vanishing case.
\begin{theorem}\label{similarly with th4.2}
Let $(u,v,h)$ be the solution of system  \eqref{Eq_ourmodel-two} subject to initial conditions \eqref{eq initial functions}. If $h_{\infty}<+\infty$ and $b_{2}$ satisfies \eqref{B4}, where we replace $B(x)$ by $b_2(x)$, then
$$\lim_{t\rightarrow +\infty} \|u(t,\cdot)\|_{C[0, h(t)]}=0~\text{ and }~\lim_{t\rightarrow +\infty}v(t,x)=\phi_{v*}$$
uniformly in any bounded subset of $[0,+\infty)$.
\end{theorem}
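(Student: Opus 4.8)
The strategy is to adapt the argument of Theorem~\ref{th4.2}, with the carrying capacity $\kappa_2$ replaced by the heterogeneous steady state $\phi_{v*}$ and with Lemma~\ref{the long time behavior} playing the role of Lemma~\ref{lexin3.4}. There are two assertions to establish: first $\|u(t,\cdot)\|_{C[0,h(t)]}\to0$, and then $v(t,\cdot)\to\phi_{v*}$ uniformly on bounded subsets of $[0,+\infty)$.

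For the first assertion, since $h_\infty<+\infty$ the parabolic domain lies in a fixed bounded strip, so Theorem~\ref{Thm_existence} together with interior parabolic estimates yields a bound of the type \eqref{eqth4.2yong} valid for all $t\in[0,\infty)$. Suppose, for contradiction, that $\limsup_{t\to\infty}\|u(t,\cdot)\|_{C[0,h(t)]}=\varepsilon>0$; then there are $t_k\to\infty$ and $x_k$ with $u(t_k,x_k)\ge\varepsilon/2$, and since $x_k<h(t_k)<h_\infty$ and $u(t_k,h(t_k))=0$, the uniform H\"older bound forces (along a subsequence) $x_k\to x_0\in[0,h_\infty)$. Set $u_k(t,x)=u(t+t_k,x)$, $v_k(t,x)=v(t+t_k,x)$; passing to a further subsequence, $(u_k,v_k)\to(\tilde u,\tilde v)$ locally, where $(\tilde u,\tilde v)$ solves the limiting system on $(-\infty,+\infty)\times(0,h_\infty)$ with the coefficients $b_1(x),b_2(x)$ carried over unchanged (they are continuous and time-independent, so the translates converge to the same functions) and with $\tilde u(t,h_\infty)=0$. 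Since $\tilde u(0,x_0)\ge\varepsilon/2>0$, the strong maximum principle gives $\tilde u>0$ throughout, and the Hopf lemma at $(0,h_\infty)$ gives $\tilde u_x(0,h_\infty)<0$; hence $u_x(t_{k_i},h(t_{k_i}))<0$ for large $i$ and, by the Stefan condition, $h'(t_{k_i})>0$. This contradicts $h'(t)\to0$, which holds because $h_\infty<+\infty$ (Lemma~3.3 of \cite{twotaiwanren}). Thus $\|u(t,\cdot)\|_{C[0,h(t)]}\to0$, and since $u\equiv0$ for $x\ge h(t)$ we in fact get $\sup_{x\ge0}u(t,x)\to0$.

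For the behaviour of $v$ I would split into an upper and a lower estimate. Upper: since $\frac{b_2(x)v}{u+v}\le b_2(x)$ and $u+v\ge v$, one has $v_t\le d_2 v_{xx}+v(b_2(x)-\delta_2 v)$, so $v$ is a subsolution of \eqref{Eq_ourmodelstationary1} with the same initial data; comparison and Lemma~\ref{the long time behavior} give $\limsup_{t\to\infty}v(t,x)\le\phi_{v*}(x)$ uniformly on compacts. Lower: given $\varepsilon>0$, the uniform decay of $u$ furnishes $T_\varepsilon$ with $0\le u(t,x)\le\varepsilon$ for $t\ge T_\varepsilon$, $x\ge0$, whence
\[
v_t\ge d_2 v_{xx}+v\left(\frac{b_2(x)v}{\varepsilon+v}-\delta_2(\varepsilon+v)\right),\qquad t\ge T_\varepsilon,\ x>0,
\]
with $v_x(t,0)=0$ and $v(T_\varepsilon,\cdot)>0$. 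One then compares $v$ from below with the solution of the corresponding $\varepsilon$-perturbed problem (on the half-line, or on large intervals $[0,\ell]$ with Dirichlet data at $\ell$, letting $\ell\to\infty$), shows that this solution tends to the maximal positive steady state $\phi_{v*}^{\varepsilon}$ of $d_2 w_{xx}+w\big(\frac{b_2(x)w}{\varepsilon+w}-\delta_2(\varepsilon+w)\big)=0$, $w_x(0)=0$, and invokes continuous dependence $\phi_{v*}^{\varepsilon}\to\phi_{v*}$ as $\varepsilon\to0$ to conclude $\liminf_{t\to\infty}v(t,x)\ge\phi_{v*}(x)$ uniformly on compacts. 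Together with the upper estimate this yields $v(t,\cdot)\to\phi_{v*}$ uniformly on bounded sets.

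The main obstacle is precisely this lower estimate for $v$. The perturbed reaction $w\mapsto w\big(\frac{b_2(x)w}{\varepsilon+w}-\delta_2(\varepsilon+w)\big)$ is not of monostable/logistic type: $w=0$ is linearly stable and there is a small unstable positive steady state of size $O(\varepsilon^2)$, so Lemma~\ref{the long time behavior} does not apply verbatim to it. One must therefore first establish uniform persistence of $v$ so that $v$ eventually stays above this $O(\varepsilon^2)$ threshold on the region of interest --- for instance by building a compactly supported, growing subsolution of the logistic equation on an interval $[0,\ell]$ with $\ell$ large enough that the principal eigenvalue in Lemma~\ref{the case about l} is negative (here the uniform lower bound $b_2(x)>\underline b>0$ in \eqref{B4} is essential) --- and then carefully track the basin of attraction as $\varepsilon\to0$, with attention to the behaviour of $v_0$ near infinity. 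The remaining ingredients --- parabolic estimates, the strong maximum and Hopf principles, the Stefan condition, $h'(t)\to0$, and continuous dependence of steady states on parameters --- are standard or already recorded in the excerpt.
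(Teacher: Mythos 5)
Your proposal follows essentially the same route as the paper, whose entire proof of this theorem is the single remark that it is ``similar to that of Theorem~\ref{th4.2}'': the contradiction argument for $u$ via the limiting problem, the Hopf lemma and the Stefan condition is a faithful adaptation, and comparing $v$ with \eqref{Eq_ourmodelstationary1} and invoking Lemma~\ref{the long time behavior} is exactly the intended heterogeneous replacement for Lemma~\ref{lexin3.4}. The obstacle you flag in the lower bound for $v$ --- that the $\varepsilon$-perturbed reaction is not of logistic type near $w=0$ --- is genuine, but it is equally latent in the paper's own proof of Theorem~\ref{th4.2}, where Lemma~\ref{lexin3.4} is applied to the same perturbed nonlinearity without comment; your persistence-plus-continuous-dependence plan (using $b_2\geq\underline{b}>0$ from \eqref{B4} to keep $v$ above the $O(\varepsilon)$ threshold before linearizing the factor $v/(\varepsilon+v)$ away) is a sound, and indeed more careful, way to close that step.
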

\noindent The proof is similar to  that of Theorem \ref{th4.2}, above.

 In order to obtain sharp criteria for spreading, we require stronger conditions on $b_{1}(x)$ and $\delta_{1}$.
Namely, we assume that
\begin{equation}\label{assume}
 b_{1}(x)-\delta_{1}\phi_{v*}\text{ is positive somewhere in }[0,h_{0}].
\end{equation}
Our assumption \eqref{assume} is not excessive in the sense that, when $b_{i}$ and $\delta_{i}~(i=1,2)$  are constant, we have $\phi_{v*}=\frac{b_{2}}{\delta_{2}}$. Consequently,  $b_{1}-\delta_{1}\phi_{v*}$ is a positive constant over the interval $[0,+\infty)$.
\begin{theorem}\label{the general case spreading}
Assume that $b_{1}(x)-\delta_{1}\phi_{v*}(x)$ satisfies  \eqref{B3} and $b_{2}(x)$ satisfies \eqref{B4} (where $B$ is replaced accordingly). If $0<d_{1}<d_{1}^{*}$, then spreading occurs.
\end{theorem}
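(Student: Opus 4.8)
The plan is to obtain a suitable positive lower bound on $u$ that forces the free boundary to spread. Recall the threshold $d_1^*$ is, by Lemma~\ref{the case about d} applied to $b(x):=b_1(x)-\delta_1\phi_{v*}(x)$ (which satisfies \eqref{B3} by assumption), exactly the value at which the principal eigenvalue $\lambda_1(d_1,h_0)$ of
\[
\left\{\begin{array}{l}
d_1\phi_{xx}+\big(b_1(x)-\delta_1\phi_{v*}(x)\big)\phi+\lambda\phi=0,\quad x\in(0,h_0),\\
\phi_x(0)=\phi(h_0)=0,
\end{array}\right.
\]
vanishes; so $0<d_1<d_1^*$ gives $\lambda_1(d_1,h_0)<0$. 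First I would argue by contradiction: suppose $h_\infty<+\infty$. Then Theorem~\ref{similarly with th4.2} applies and yields $\|u(t,\cdot)\|_{C[0,h(t)]}\to 0$ and $v(t,\cdot)\to\phi_{v*}$ uniformly on bounded sets. In particular, for any $\varepsilon>0$ there is $T>0$ with $v(t,x)\le \phi_{v*}(x)+\varepsilon$ for $t\ge T$ and $x\in[0,h_0]$ (using $h(t)\ge h_0$). Feeding this into the $u$-equation and discarding the (nonnegative) $-\delta_1 u$ term keeps the comparison clean, so $u$ is a supersolution-in-reverse; more precisely $u$ satisfies, on $[0,h_0]$,
\[
u_t-d_1u_{xx}\ \ge\ u\big(b_1(x)-\delta_1\phi_{v*}(x)-\delta_1\varepsilon-\delta_1 u\big),\qquad u_x(t,0)=0,\ u(t,h_0)\ge 0.
\]

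Next I would introduce the auxiliary Dirichlet problem $\underline u$ on the \emph{fixed} domain $(0,h_0)$ governed by the logistic-type equation $\underline u_t-d_1\underline u_{xx}=\underline u(b_1(x)-\delta_1\phi_{v*}(x)-\delta_1\varepsilon-\delta_1\underline u)$ with $\underline u_x(t,0)=0$, $\underline u(t,h_0)=0$, and $\underline u(T,\cdot)=u(T,\cdot)>0$ on $(0,h_0)$ (positivity of $u(T,\cdot)$ coming from the strong maximum principle, since $h(t)\ge h_0$ so the whole interval $(0,h_0)$ lies strictly inside the support). By the comparison principle $\underline u(t,x)\le u(t,x)$ for $t\ge T$, $x\in[0,h_0]$. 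Now I would invoke the standard logistic convergence result on a fixed interval with Neumann--Dirichlet conditions: when the associated principal eigenvalue is negative — i.e. $\lambda_1(d_1,h_0)+\delta_1\varepsilon<0$, which holds for $\varepsilon$ small since $\lambda_1(d_1,h_0)<0$ — the solution $\underline u(t,\cdot)$ converges as $t\to\infty$ to the unique positive steady state $\underline U>0$, uniformly on compact subsets of $(0,h_0)$ (this is the analogue of Lemma~\ref{le3.1}, valid for space-dependent coefficients; cf.\ the references cited there). Hence $\liminf_{t\to\infty}u(t,x)\ge \underline U(x)>0$ on compact subsets of $(0,h_0)$, contradicting $\|u(t,\cdot)\|_{C[0,h(t)]}\to 0$. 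Therefore $h_\infty=+\infty$, i.e.\ spreading occurs.

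The main obstacle, and the one point deserving care, is justifying the convergence of $\underline u$ to a positive steady state in the heterogeneous setting: one needs that $\lambda_1(d_1,h_0)<0$ is precisely the sign condition that makes $\underline U\equiv 0$ unstable and guarantees a unique positive attracting equilibrium for the scalar logistic equation $\underline u_t=d_1\underline u_{xx}+\underline u(m(x)-\delta_1\underline u)$ with $m(x)=b_1(x)-\delta_1\phi_{v*}(x)-\delta_1\varepsilon$ on $(0,h_0)$ under mixed Neumann/Dirichlet data. This is classical (upper-lower solution method plus the variational characterization \eqref{by-variational-method}), but it must be stated carefully because the coefficient is only $C^{0,\theta}$ unless \eqref{B3} upgrades it to $C^1$; here \eqref{B3} does give $C^1$ regularity of $b_1-\delta_1\phi_{v*}$ (and $\phi_{v*}\in C^2$ from Lemma~\ref{the long time behavior}), so the eigenvalue theory of Lemmas~\ref{the case about d}--\ref{the case about l} applies verbatim. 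A secondary, purely technical point is extracting the uniform bound $v(t,x)\le\phi_{v*}(x)+\varepsilon$ on $[0,h_0]$ from the ``uniformly on bounded subsets'' convergence in Theorem~\ref{similarly with th4.2}, which is immediate since $[0,h_0]$ is a fixed compact set. Everything else — the comparison principle invocations, the sign bookkeeping in $\varepsilon$ — is routine.
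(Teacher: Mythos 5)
Your proposal is correct and follows the same architecture as the paper's proof: bound $v$ from above by $\phi_{v*}+\varepsilon$ for large $t$, pass to the mixed Neumann--Dirichlet eigenvalue problem for $d_1\varphi_{xx}+\bigl(b_1(x)-\delta_1(\phi_{v*}+\varepsilon)\bigr)\varphi+\lambda\varphi=0$ on $(0,h_0)$, use $0<d_1<d_1^*$ (via Lemma \ref{the case about d}(iii)) to get $\lambda_1<0$, deduce a positive lower bound for $u$ on $[0,h_0]$, and conclude via Theorem \ref{similarly with th4.2} that $h_\infty=+\infty$. The one place you diverge is the step you yourself flag as the main obstacle: you run a full parabolic comparison problem on $(0,h_0)$ and invoke convergence of the heterogeneous logistic equation to its unique positive steady state when the principal eigenvalue is negative. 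The paper avoids this machinery entirely by taking the \emph{time-independent} subsolution $\underline{u}(t,x)=\delta\varphi_1(x)$, with $\varphi_1$ the principal eigenfunction and $\delta>0$ small enough that $\delta\varphi_1\le\min\{-\lambda_1/\delta_1,\,u(T,\cdot)\}$; a one-line computation then shows $\underline{u}_t-d_1\underline{u}_{xx}-\underline{u}(b_1-\delta_1(\phi_{v*}+\varepsilon)-\delta_1\underline{u})=\delta\varphi_1(\lambda_1+\delta_1\delta\varphi_1)\le 0$, so $u\ge\delta\varphi_1$ for $t\ge T$ directly by comparison, giving $\liminf_{t\to\infty}\|u(t,\cdot)\|_{C[0,h_0]}\ge\delta\varphi_1(0)>0$. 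So your argument works, but the static-subsolution trick buys you exactly the step you were worried about justifying; your version of the threshold $d_1^*$ (defined from the unperturbed weight $b_1-\delta_1\phi_{v*}$, with $\varepsilon$ chosen afterwards so small that $\lambda_1+\delta_1\varepsilon<0$) is, if anything, cleaner than the paper's, where $d_1^*$ is introduced for the $\varepsilon$-perturbed problem. One minor bookkeeping note: the paper obtains $v\le\phi_{v*}+\varepsilon$ by comparing $v$ with the solution of the pure logistic problem \eqref{aaa} and Lemma \ref{the long time behavior}, rather than from the contradiction hypothesis, so the upper bound on $v$ is unconditional; this does not affect the validity of your contradiction argument.
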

\begin{proof}
 First, we consider the following equation:
\begin{equation}
\left\{\begin{array}{ll}\label{aaa}
\displaystyle{\frac{\partial \overline{v}}{\partial t}(t,x)=d_{2}\overline{v}_{xx}+\overline{v}(b_{2}(x)-\delta_{2}\overline{v}),}&t>0, ~~ x>0,\vspace{7 pt}\\
\overline{v}_{x}(t,0)=0,&t>0,\vspace{7 pt}\\
\overline{v}(0,x)=v_{0}(x).
\end{array}\right.
\end{equation}
Since $b_{2}(x)$ satisfies the hypotheses of Lemma \ref{the long time behavior},  all solutions of (\ref{aaa}) with non-trivial non-negative initial values converge to $\phi_{v*}$ as $t\rightarrow \infty$.

It follows, from the comparison principle, that $v\leq\overline{v}$ for all $t>0$ and $x>0$. Since $\displaystyle{\lim_{t\rightarrow+\infty} \overline{v}(t,x)= \phi_{v*}}$ uniformly in any compact subset of $[0,\infty),$ then, for any $\varepsilon>0$, there exists $T>0$ such that $ v(t,x)\leq \phi_{v*}+\varepsilon,$ for $t\geq T$.

\noindent Consider the following eigenvalue problem:
\begin{equation}
\left\{\begin{array}{ll}
d_{1}\varphi_{xx}+\varphi(b_{1}(x)-\delta_{1}(\phi_{v*}+\varepsilon))+\lambda\varphi=0,~x\in(0,h_{0}),\vspace{7 pt}\\
\varphi_{x}(0)=\varphi(h_{0})=0.
\end{array}\right.
\end{equation}
It is  well known that the principal eigenvalue $\lambda_{1}$ can be characterized by $$\lambda_{1}=\inf_{\varphi\in H^{1}(0,h_{0})}\left\{\int^{h_{0}}_{0}d_{1}\varphi^{2}_{x}-(b_{1}(x)-\delta_{1}(\phi_{v*}+\varepsilon))\varphi^{2},\int^{h_{0}}_{0}\varphi^{2}=1\right\}.$$
Using (iii) of Lemma \ref{the case about d}, for any fixed $h_{0}$, there exists $d_{1}^{*}$ such that $$\lambda_{1}(d_{1})<0\text{ for all }0<d_{1}<d_{1}^{*}, ~\lambda_{1}(d_{1})=0\text{ for }d_{1}=d_{1}^{*},\text{ and }\lambda_{1}(d_{1})>0 \text{ for }d_{1}>d_{1}^{*}.$$

 In this theorem, we have $0<d_{1}<d_{1}^{*}.$ Let us set $\underline{u}=\delta\varphi_{1}(x)$, for $t\geq T$ and $x\in[0,h_{0}]$ (here $\varphi_{1}(x)$ is the corresponding eigenfunction of $\lambda_{1}$). Choose $\delta>0$, small enough, so that $$\delta\varphi_{1}(x)\leq\min\left\{ -\frac{\lambda_{1}}{\delta_{1}}, u(T,x)\right\} \text{ for }x\in[0,h_{0}].$$ A straightforward  calculation leads to

\begin{equation}
\left\{\begin{array}{l}\label{bbb}
\displaystyle{\frac{\partial\underline{u}}{\partial t}-d_{1}\underline{u}_{xx}-\underline{u}(b_{1}(x)-\delta_{1}(\phi_{v*}+\varepsilon)-\delta_{1}\underline{u})}\vspace{7 pt}\\
=\delta\varphi_{1}(x)(\lambda_{1}+\delta_{1}\delta\varphi_{1}(x))\leq0~\text{ for }t>T, ~ 0<x<h_{0},\vspace{7 pt}\\
\underline{u}_{x}(t,0)=0,~t>T,\vspace{7 pt}\\
\underline{u}(t,h_{0})=0,~t>T,\vspace{7 pt}\\
\underline{u}(0,x)=\delta\varphi_{1}\leq u(T,x),~0\leq x\leq h_{0}.
\end{array}\right.
\end{equation}
By the comparison principle, we have $u\geq\underline{u}$, for $t\geq T$ and $x\in[0,h_{0}]$. Thus, $$\liminf_{t\rightarrow\infty}\|u(t,\cdot)\|_{C[0,h_{0}]}\geq\delta\varphi_{1}(0)>0.$$
By Theorem \ref{similarly with th4.2}, we have $h_{\infty}=+\infty$. Therefore, spreading occurs.
\end{proof}

\begin{theorem}\label{the-spreading-about-h}
 Suppose that $b_{1}(x)-\delta_{1}\phi_{v*}(x)$ satisfies \eqref{B4} and $b_{2}(x)$ satisfies the hypotheses of Lemma \ref{the long time behavior}. If $h_{0}>h^{*}$, then $h_{\infty}=+\infty$ (i.e. the species $u$ spreads eventually).
\end{theorem}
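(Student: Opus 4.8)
The plan is to mimic the strategy used in the proof of Theorem~\ref{the general case spreading}, but to exploit the dependence of the principal eigenvalue on the size $h_0$ of the initial interval (Lemma~\ref{the case about l}) rather than on the diffusion rate $d_1$ (Lemma~\ref{the case about d}). First I would note that, by the comparison principle applied to the $v$-equation together with Lemma~\ref{the long time behavior}, the function $\overline{v}$ solving \eqref{aaa} dominates $v$ and converges to $\phi_{v*}$ uniformly on compact sets; hence for every $\varepsilon>0$ there is a time $T>0$ with $v(t,x)\le \phi_{v*}(x)+\varepsilon$ for all $t\ge T$ and $x\ge0$. This reduces the study of $u$ on $[0,h_0]$ to the scalar logistic-type subequation
\[
\frac{\partial u}{\partial t}\ge d_1 u_{xx}+u\big(b_1(x)-\delta_1(\phi_{v*}(x)+\varepsilon)-\delta_1 u\big),\quad t>T,\ 0<x<h_0,
\]
with Neumann condition at $x=0$, and $u(t,h(t))=0$ where $h(t)\ge h_0$; in particular $u$ dominates the solution of the corresponding problem on the fixed interval $[0,h_0]$ with zero Dirichlet data at $h_0$.

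Next I would bring in the eigenvalue problem
\[
\left\{\begin{array}{l}
d_1\varphi_{xx}+\varphi\big(b_1(x)-\delta_1(\phi_{v*}(x)+\varepsilon)\big)+\lambda\varphi=0,\quad x\in(0,h_0),\\
\varphi_x(0)=\varphi(h_0)=0,
\end{array}\right.
\]
and denote its principal eigenvalue by $\lambda_1(h_0)$. Since $b_1(x)-\delta_1\phi_{v*}(x)$ satisfies \eqref{B4}, for $\varepsilon$ small the potential $b_1(x)-\delta_1(\phi_{v*}(x)+\varepsilon)$ still lies between two positive constants, so Lemma~\ref{the case about l} applies: there is a threshold $h^*=h^*(\varepsilon)$ with $\lambda_1(h_0)<0$ precisely when $h_0>h^*$. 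By the continuity of $\lambda_1$ in the potential (equivalently in $\varepsilon$), and since $\lambda_1(h_0)<0$ strictly when $h_0>h^*$ at $\varepsilon=0$, one can fix $\varepsilon>0$ small enough that $\lambda_1(h_0)<0$ still holds under the hypothesis $h_0>h^*$. With $\lambda_1<0$ in hand, set $\underline{u}(x)=\delta\varphi_1(x)$ where $\varphi_1>0$ is the associated eigenfunction (normalized, say, positive on $[0,h_0)$), and choose $\delta>0$ small enough that $\delta\varphi_1(x)\le\min\{-\lambda_1/\delta_1,\,u(T,x)\}$ on $[0,h_0]$. A direct computation, exactly as in \eqref{bbb}, shows $\underline{u}$ is a time-independent subsolution of the $u$-problem on $[0,h_0]$ for $t>T$, so by the comparison principle $u(t,x)\ge\delta\varphi_1(x)$ for $t\ge T$, $x\in[0,h_0]$, whence $\liminf_{t\to\infty}\|u(t,\cdot)\|_{C[0,h_0]}\ge\delta\varphi_1(0)>0$.

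Finally, I would invoke the contrapositive of Theorem~\ref{similarly with th4.2}: if $h_\infty<+\infty$ then $\|u(t,\cdot)\|_{C[0,h(t)]}\to0$, contradicting the strictly positive lower bound just established. Hence $h_\infty=+\infty$, i.e. spreading occurs. The main obstacle I anticipate is the bookkeeping around $\varepsilon$: one must verify that $h^*$ as stated in the theorem is the threshold associated with the limiting potential $b_1(x)-\delta_1\phi_{v*}(x)$ (i.e. $\varepsilon=0$), and then argue, via monotonicity of $\lambda_1$ in the potential together with its continuous dependence on $\varepsilon$, that for $h_0>h^*$ the perturbed eigenvalue $\lambda_1(h_0)$ stays negative for all sufficiently small $\varepsilon>0$; this is the only delicate point, the rest being a routine sub/supersolution argument mirroring Theorems~\ref{the general case spreading} and~\ref{similarly with th4.2}. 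One should also confirm that $u$ on the genuine free-boundary domain dominates the auxiliary fixed-domain solution, which is immediate since $h(t)\ge h(0)=h_0$ and $u\ge0$, so zero Dirichlet data at $x=h_0$ is a valid lower barrier there.
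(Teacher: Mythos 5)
Your proposal is correct and follows essentially the same route as the paper: bound $v$ above by $\phi_{v*}+\varepsilon$ via Lemma \ref{the long time behavior}, use Lemma \ref{the case about l} to get $\lambda_1(h_0)<0$ for $h_0>h^*$, build the stationary subsolution $\delta\varphi_1$ exactly as in \eqref{bbb-about-h}, and conclude by the contrapositive of Theorem \ref{similarly with th4.2}. Your extra care about the $\varepsilon$-dependence of the threshold (defining $h^*$ from the limiting potential $b_1-\delta_1\phi_{v*}$ and using monotone/continuous dependence of $\lambda_1$ on the potential to keep $\lambda_1<0$ for small $\varepsilon$) is a point the paper glosses over, and is a welcome tightening rather than a deviation.
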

\begin{proof}
Similarly, we consider the following equation
\begin{equation}
\left\{\begin{array}{ll}\label{be similarly with aaa}
\displaystyle{\frac{\partial \bar{v}}{\partial t}=d_{2}\bar{v}_{xx}+\bar{v}(b_{2}(x)-\delta_{2}\bar{v}),}&t>0, ~~ x>0,\vspace{7 pt}\\
\bar{v}_{x}(t,0)=0,&t>0,\vspace{7 pt}\\
\bar{v}(0,x)=v_{0}(x).
\end{array}\right.
\end{equation}
Since $b_{2}(x)$ satisfies the hypotheses of Lemma \ref{the long time behavior},  all solutions of (\ref{be similarly with aaa}) with  nontrivial and nonnegative initial conditions converge to $\phi_{v*}$ as $t\rightarrow \infty$.

It follows from the comparison principle that $v\leq\overline{v}$ for $t>0$, $x>0$. Since $\displaystyle{\limsup_{t\rightarrow+\infty} \overline{v}(t,x)= \phi_{v*}}$ uniformly in any compact subset of $[0,\infty)$. So for any $\varepsilon>0$, there exists $T>0$ such that $ v(t,x)\leq \phi_{v*}+\varepsilon$ for $t\geq T$.

Consider the following eigenvalue problem:
\begin{equation}
\left\{\begin{array}{ll}
d_{1}\varphi_{xx}+\varphi(b_{1}(x)-\delta_{1}(\phi_{v*}+\varepsilon))+\lambda\varphi=0,~x\in(0,h_{0}),\vspace{7 pt}\\
\varphi_{x}(0)=\varphi(h_{0})=0.
\end{array}\right.
\end{equation}
The principal eigenvalue $\lambda_{1}$ is characterized by
$$\lambda_{1}=\inf_{\varphi\in H^{1}(0,h_{0})}\left\{\int^{h_{0}}_{0}d_{1}\varphi^{2}_{x}-(b_{1}(x)-\delta_{1}(\phi_{v*}+\varepsilon))\varphi^{2},\int^{h_{0}}_{0}\varphi^{2}=1\right\}.$$
Since $b_{1}(x)-\delta_{1}\phi_{v*}$ satisfies the hypotheses of (\ref{B4}). Then by Lemma \ref{the case about l}, for any fixed $d_{1}$, there exists $h^{*}$ such that $\lambda_{1}(h_{0})<0$ for all $h_{0}>h^{*}$, $\lambda_{1}(h_{0})=0$ for $h_{0}=h^{*}$, and $\lambda_{1}(h_{0})>0$ for $h_{0}<h^{*}$.

 If $h_{0}>h^{*}$, then we set $\underline{u}=\delta\varphi_{1}(x)$, for $t\geq T$, $x\in[0,h_{0}]$ (here $\varphi_{1}(x)$ is the corresponding eigenfunction of $\lambda_{1}$). Choose $\delta>0$ small enough so that $\delta\varphi_{1}(x)\leq\min\{ -\frac{\lambda_{1}}{\delta_{1}}, u(T,x) \}$ for $x\in[0,h_{0}]$. After a straightforward calculation, we obtain
\begin{equation}
\left\{\begin{array}{ll}\label{bbb-about-h}
\displaystyle{\frac{\partial\underline{u}}{\partial t}-d_{1}\frac{\partial^{2} \underline{u}}{\partial x^{2}}-\underline{u}(b_{1}(x)-\delta_{1}(\phi_{v*}+\varepsilon)-\delta_{1}\underline{u})}\vspace{7 pt}\\
=\delta\varphi_{1}(x)(\lambda_{1}+\delta_{1}\delta\varphi_{1}(x))\leq0,&t>T, ~~ 0<x<h_{0},\vspace{7 pt}\\
\partial_{x}\underline{u}(t,0)=0,&t>T,\vspace{7 pt}\\
\underline{u}(t,h_{0})=0,&t>T,\vspace{7 pt}\\
\underline{u}(0,x)=\delta\varphi_{1}\leq u(T,x),&0\leq x\leq h_{0}.
\end{array}\right.
\end{equation}
By the comparison principle, we have $u\geq\underline{u}$ for $t\geq T$, $x\in[0,h_{0}]$. Hence, $$\liminf_{t\rightarrow\infty}\|u(t,\cdot)\|_{C[0,h_{0}]}\geq\delta\varphi_{1}(0)>0.$$
Similarly, we have $h_{\infty}=+\infty$; hence, spreading occurs.
\end{proof}

\begin{theorem}\label{the general case Vanishing}
 If $d_{1}>d_{1}^{*}$ and $u_{0}$ is small enough, then ``vanishing'' occurs.
\end{theorem}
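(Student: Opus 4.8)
```latex
\textbf{Proof proposal.}
The plan is to construct an explicit supersolution for the $u$-component that vanishes in finite ``spreading'' sense, thereby forcing $h_\infty<+\infty$, and then invoke Theorem~\ref{similarly with th4.2} to conclude that $u\to 0$ and $v\to\phi_{v*}$. The starting point is the same domination that appears in the proofs above: since $v>0$ and $u\ge 0$, the first equation of \eqref{Eq_ourmodel-two} gives
\begin{equation}\label{eq:plan-super}
\frac{\partial u}{\partial t}=d_{1}u_{xx}+u\bigl(b_{1}(x)-\delta_{1}(u+v)\bigr)\le d_{1}u_{xx}+u\,b_{1}(x),
\end{equation}
so $(u,h)$ is a subsolution of the free boundary problem with reaction term $\bar u\,b_1(x)$ (or, if one prefers to keep a logistic term, $\bar u(b_1(x)-\delta_1\bar u)$; either works). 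Hence it suffices to show that the corresponding \emph{scalar} free boundary problem
\begin{equation}\label{eq:plan-scalar}
\left\{\begin{array}{ll}
\bar u_{t}=d_{1}\bar u_{xx}+\bar u\,b_{1}(x),& t>0,\ 0<x<\bar h(t),\vspace{4pt}\\
\bar u_{x}(t,0)=0,\ \bar u(t,\bar h(t))=0,& t>0,\vspace{4pt}\\
\bar h'(t)=-\mu\,\bar u_{x}(t,\bar h(t)),& t>0,\vspace{4pt}\\
\bar u(0,x)=u_{0}(x),\ \bar h(0)=h_{0},& x\in[0,h_{0}],
\end{array}\right.
\end{equation}
has a bounded free boundary when $d_1>d_1^*$ and $u_0$ is small, because $h(t)\le\bar h(t)$ by the comparison principle (Lemma~\ref{le3.2}), so $h_\infty\le\bar h_\infty<+\infty$.

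To analyze \eqref{eq:plan-scalar} I would mimic the construction used in Theorem~\ref{th4.5} (which in turn follows \cite[Lemma~3.8]{duyihongshou}), but with the eigenvalue criterion of Lemma~\ref{the case about d} replacing the explicit $\pi/2$-threshold. Since $d_1>d_1^*$, the principal eigenvalue $\lambda_1(d_1)$ of \eqref{Eq_2-the main principal eigenvalue} with $b=b_1$ on $(0,h_0)$ is strictly positive; let $\varphi_1>0$ be the associated eigenfunction, normalized so $\varphi_1>0$ on $[0,h_0)$, $\varphi_{1,x}(0)=0$, $\varphi_1(h_0)=0$. The idea is to build a supersolution of the form
\begin{equation}\label{eq:plan-barform}
w(t,x)=\gamma(t)\,\Phi\!\left(\frac{x}{s(t)}\right),
\end{equation}
where $\Phi$ is (a slight dilation of) $\varphi_1$, $s(t)\nearrow s_\infty<+\infty$ is a carefully chosen increasing profile for the front, and $\gamma(t)\searrow 0$ decays. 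Plugging \eqref{eq:plan-barform} into \eqref{eq:plan-scalar} and using $\lambda_1>0$: the linear reaction produces a term $-\lambda_1\gamma\Phi/s^2$-type gain, the stretching $s'(t)>0$ produces a controllable error, and one checks that $\gamma' \le -c\gamma$ for some $c>0$ provided $s_\infty$ is close enough to $h_0$ (so the Rayleigh quotient of $\Phi$ on $(0,s_\infty)$ stays positive) — this is exactly the situation $\lambda_1(d_1)>0$ guarantees. Simultaneously the Stefan condition $s'(t)=-\mu\,\gamma(t)\,\Phi'(\cdot)/s(t)$ must be \emph{dominated} by the chosen $s'(t)$; since $\gamma$ decays exponentially, $\int_0^\infty \gamma<\infty$, so one can pick $s(t)=h_0+\mu C\int_0^t\gamma$, which stays bounded. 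The requirement that $u_0$ be small is used precisely to get $u_0(x)\le w(0,x)=\gamma(0)\varphi_1(x)$ as an initial ordering, which is possible because $u_0(h_0)=u_0'(0)=0$ matches the boundary behavior of $\varphi_1$ (so near $x=h_0$ one compares derivatives, using $u_0'(h_0)$ versus $\gamma(0)\varphi_1'(h_0)<0$, and takes $u_0$ with sufficiently small $C^1$ norm). Then the comparison principle for the scalar free boundary problem gives $\bar h(t)\le s(t)\le s_\infty$ for all $t$, hence $\bar h_\infty<+\infty$, hence $h_\infty<+\infty$.

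Finally, having established $h_\infty<+\infty$, Theorem~\ref{similarly with th4.2} immediately yields $\|u(t,\cdot)\|_{C[0,h(t)]}\to 0$ and $v(t,x)\to\phi_{v*}$ uniformly on bounded sets, i.e. ``vanishing'' occurs, which is the assertion. The main obstacle I anticipate is the bookkeeping in the supersolution construction \eqref{eq:plan-barform}: one must juggle three coupled requirements — the differential inequality for $w$, the inequality $s'(t)\ge -\mu w_x(t,s(t))$ for the free boundary, and the initial ordering $u_0\le w(0,\cdot)$ — and verify they are simultaneously satisfiable, which hinges delicately on the sign $\lambda_1(d_1)>0$ and on quantifying ``$u_0$ small enough.'' An alternative that sidesteps most of this is to cite \cite[Lemma~3.8]{duyihongshou} (or \cite{duyihongshou}'s general vanishing lemma) directly for the scalar problem \eqref{eq:plan-scalar}, noting that the condition ``$h_0$ below the vanishing threshold'' there is equivalent, via Lemma~\ref{the case about d}, to ``$\lambda_1(d_1)>0$'', i.e. to $d_1>d_1^*$; then only the reduction \eqref{eq:plan-super}–\eqref{eq:plan-scalar} and the smallness of $u_0$ need to be spelled out here.
```
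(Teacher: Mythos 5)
Your proposal is correct and follows essentially the same route as the paper: the paper's proof drops the nonnegative terms to reduce to a scalar problem with reaction bounded by $b_1(x)\overline{u}$, and builds exactly the supersolution you describe, namely $\overline{u}(t,x)=\beta e^{-\alpha t}\phi_1\bigl(xh_0/\overline{h}(t)\bigr)$ with $\overline{h}(t)=h_0(1+\alpha-\tfrac{\alpha}{2}e^{-\alpha t})$ bounded, using $\lambda_1(d_1)>0$ (i.e.\ $d_1>d_1^*$) to absorb the dilation errors and choosing $\beta$ and $u_0$ small for the Stefan condition and the initial ordering. The only cosmetic difference is that the paper starts the supersolution front at $\overline{h}(0)=h_0(1+\alpha/2)>h_0$, so the initial comparison $u_0\le\beta\phi_1(x/(1+\alpha/2))$ needs only smallness of $\|u_0\|_\infty$ rather than the $C^1$ matching near $x=h_0$ you mention.
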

\begin{proof}
We consider the following problem as an auxiliary to the first equation of \eqref{Eq_ourmodel-two}:
\begin{equation}\label{Eq_the auxiliary problem}
\left\{\begin{array}{ll}
\displaystyle{\frac{\partial u}{\partial t}= d_{1}u_{xx}+u(b_{1}(x)-\delta_{1}u)},&t>0, ~~ 0<x<h_{0}, \\
\\
u_{x}(t,0)=u(t,h_{0})=0,&t>0,\\
\\
u(0,x)=u_{0}(x),&  x\in[0,h_{0}].
\end{array}\right.
\end{equation}
Denote the principal eigenvalue $\lambda_{1}$ and the corresponding positive eigenfunction $\phi_{1}$ satisfy
\begin{equation}\label{Eq_the auxiliary principal eigenvalue problem}
\left\{\begin{array}{ll}
d_{1}\phi_{xx}+\phi b_{1}(x)+\lambda\phi=0,& 0<x<h_{0}, \\
\\
\phi_{x}(0)=\phi(h_{0})=0.
\end{array}\right.
\end{equation}
One can verify that there exists $d_{1}^{*}$ such that $\lambda_{1}>0$, when $d_{1}>d_{1}^{*}$. Furthermore,  it follows, from Theorem 4.2 in  \cite{zhouxiaowenzi}, that there exists a constant $\mathcal{B}$ such that $\phi'_{1}(x)\leq 2h_{0}\mathcal{B}\phi_{1}(x)$ for all $x\in[0,h_{0}]$. Now, we can use the following auxiliary functions, which were constructed in   \cite{zhouxiaowenzi}. Let
$$\overline{h}(t)=h_{0}(1+\alpha-\frac{\alpha}{2}e^{-\alpha t}),~ \text{ for }~ t\geq0 ~\text{ and }$$
$$\overline{u}(t,x)=\beta e^{-\alpha t}\phi_{1}\left(\frac{xh_{0}}{\overline{h}(t)}\right), ~\text{ for } ~t\geq0~\text{ and }~0\leq x\leq \overline{h}(t).$$
The conditions on $\alpha$ and $\beta$ will be determined later. If we let $0<\alpha\leq1$, direct calculations show that
\begin{eqnarray*}
\left|\frac{h^{2}_{0}}{\bar{h}^{2}(t)}b_{1}(\frac{xh_{0}}{\bar{h}(t)})-b_{1}(x)\right|
&\leq&\frac{h^{2}_{0}}{\bar{h}^{2}(t)}|b_{1}(\frac{xh_{0}}{\bar{h}(t)})-b_{1}(x)|+\left|(\frac{h^{2}_{0}}{\bar{h}^{2}(t)}-1)b_{1}(x)\right|\vspace{7 pt}\\
&\leq&\left|b_{1}(\frac{xh_{0}}{\bar{h}(t)})-b_{1}(x)\right|+\|b_{1}\|_{C([0,2h_{0}])}\left|\frac{h^{2}_{0}}{\bar{h}^{2}(t)}-1\right|\vspace{7 pt}\\
&\leq&2\left[h_{0}\|b_{1}\|_{C^{1}([0,2h_{0}])}+\|b_{1}\|_{C([0,2h_{0}])}\right]\left|\frac{h_{0}}{\overline{h}(t)}-1\right|.
\end{eqnarray*}
Since $\overline{h}(t)\rightarrow h_{0}$ as $\alpha\rightarrow0$, we can find sufficiently small $\alpha_{1}$, such that $$\left|\frac{h^{2}_{0}}{\bar{h}^{2}(t)}b_{1}(\frac{xh_{0}}{\bar{h}(t)})-b_{1}(x)\right|\leq\frac{\lambda_{1}}{4}~\text{ for }~\alpha\leq\alpha_{1}.$$
Moreover, there exists $\alpha_{2}>0$, small enough, such that
\[2h^2_{0}\mathcal{B}\alpha\leq\frac{1}{4}\lambda_{1}~\text{ and }~\frac{1}{(1+\alpha)^{2}}\geq\frac{3}{4}, \text{ for $\alpha\leq\alpha_{2}$.}\]
Let $\alpha=\min\left\{1, \frac{\lambda_{1}}{4},\alpha_{1}, \alpha_{2} \right\}$. Direct calculation leads to
\begin{eqnarray*}
\overline{u}_{t}-d_{1}\overline{u}_{xx}-b_{1}(x)\overline{u}&=&-\alpha\overline{u}-\beta e^{-\alpha t}\phi'_{1}\left(\frac{xh_{0}}{\overline{h}}\right)\frac{xh_{0}\overline{h}'(t)}{\overline{h}^{2}(t)}\vspace{7 pt}\\
&-&\beta e^{-\alpha t} d_{1}\phi''_{1}\left(\frac{xh_{0}}{\overline{h}(t)}\right)\frac{h^{2}_{0}}{\overline{h}^{2}(t)}-b_{1}(x)\overline{u}\vspace{7 pt}\\
&=& -\alpha\overline{u}-\beta e^{-\alpha t}\phi'_{1}\left(\frac{xh_{0}}{\overline{h}}\right)\frac{xh_{0}\overline{h}'(t)}{\overline{h}^{2}(t)}\vspace{7 pt}\\
&+&\left[ \frac{h^{2}_{0}}{\overline{h}^{2}(t)}b_{1}\left(\frac{xh_{0}}{\overline{h}(t)}\right)-b_{1}(x)\right]\overline{u}+\frac{h^{2}_{0}}{\overline{h}^{2}(t)}\lambda_{1}\overline{u}\vspace{7 pt}\\
&\geq&-\alpha\overline{u}-2h^{2}_{0}\mathcal{B}\alpha^{2}\overline{u}-\frac{\lambda_{1}\overline{u}}{4}+\frac{\lambda_{1}\overline{u}}{(1+\alpha)^{2}}\vspace{7 pt}\\
&\geq&\overline{u}(\frac{-\lambda_{1}}{4}+\frac{-\lambda_{1}}{4}+\frac{-\lambda_{1}}{4}+\frac{3\lambda_{1}}{4})=0.
\end{eqnarray*}
Furthermore, we choose $\displaystyle{0<\beta\leq-\frac{h_{0}\alpha^{2}}{2\mu\phi'_{1}(h_{0})}}$. Then,

\begin{eqnarray*}
-\mu\overline{u}_{x}(t,\overline{h}(t))&=&-\beta\mu e^{-\alpha t}\phi'_{1}(h_{0})\frac{h_{0}}{\overline{h}(t)}\\
&\leq& -\beta\mu e^{-\alpha t}\phi'_{1}(h_{0})\\
&\leq&\frac{h_{0}\alpha^{2}}{2}e^{-\alpha t}=\overline{h}'(t).
\end{eqnarray*}
In order to apply the comparison principle, we choose $u_{0}$ small enough such that
$$u_{0}(x)\leq\beta\phi_{1}\left(\frac{x}{1+\frac{\alpha}{2}}\right),~\text{for}~ x\in[0,h_{0}].$$
Thus, we have
\begin{equation}\label{Eq_the vanishing compare}
\left\{\begin{array}{ll}
\displaystyle{\frac{\partial \overline{u}}{\partial t}- d_{1}\overline{u}_{xx}-\overline{u}(b_{1}(x)-\delta_{1}\overline{u})\geq0,}&t>0, ~~ 0<x<\overline{h}(t),\vspace{7 pt}
\\
\overline{u}_{x}(t,0)=\overline{u}(t,h(t))=0,&t>0,\vspace{7 pt}\\
\overline{h}'(t)\geq-\mu\overline{u}_{x}(t,\overline{h}(t)), &t>0,\vspace{7 pt}\\
\overline{u}(0,x)=\beta\phi_{1}\left(\frac{x}{1+\frac{\alpha}{2}}\right)\geq u_{0}(x), &x\in[0,h_{0}],\vspace{7 pt}\\
\overline{h}(0)=h_{0}(1+\frac{\alpha}{2})>h_{0}.
\end{array}\right.
\end{equation}
Form the comparison principle, we have
$h(t)\leq\overline{h}(t)$ for $t>0$ and \[u(t,x)\leq\overline{u}(x,t)~\text{for~$t>0$ and $x\in[0,h(t)]$}.\] So, $\displaystyle{h_{\infty}\leq\lim_{t\rightarrow+\infty}\overline{h}(t)=h_{0}(1+\alpha)<+\infty}.$ This implies that vanishing occurs.
\end{proof}

Moreover, we can  derive vanishing criteria in terms of the coefficient $\mu$ when $d_{1}>d_{1}^{*}$.
\begin{theorem}\label{the general case Vanishing 2}
 Suppose that  $d_{1}>d_{1}^{*}$. For any given $u_{0}$, there exists $\mu_{*}$ depending on $u_{0}$ and $h_{0}$,  such that vanishing occurs whenever $\mu\leq\mu_{*}$.
\end{theorem}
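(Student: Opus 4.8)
The plan is to revisit the supersolution construction carried out in the proof of Theorem~\ref{the general case Vanishing} and to track precisely how each constant depends on the data, so as to reverse the roles of $u_{0}$ and $\mu$: instead of shrinking $u_{0}$ for a fixed $\mu$, we shrink $\mu$ for a fixed but arbitrary $u_{0}$.

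First I would isolate the ingredients that do \emph{not} depend on $\mu$ or on $u_{0}$. Since $d_{1}>d_{1}^{*}$, the principal eigenvalue $\lambda_{1}$ of \eqref{Eq_the auxiliary principal eigenvalue problem} is positive; the positive eigenfunction $\phi_{1}$, the constant $\mathcal{B}$ with $\phi_{1}'(x)\le 2h_{0}\mathcal{B}\phi_{1}(x)$, and hence the exponent $\alpha=\min\{1,\lambda_{1}/4,\alpha_{1},\alpha_{2}\}$ used in that proof, all depend only on $d_{1}$, $h_{0}$ and $b_{1}$. With this $\alpha$ fixed, set $\overline{h}(t)=h_{0}(1+\alpha-\frac{\alpha}{2}e^{-\alpha t})$ and $\overline{u}(t,x)=\beta e^{-\alpha t}\phi_{1}(xh_{0}/\overline{h}(t))$. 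The computation in the proof of Theorem~\ref{the general case Vanishing} gives $\overline{u}_{t}-d_{1}\overline{u}_{xx}-b_{1}(x)\overline{u}\ge 0$, and since this inequality is homogeneous of degree one in $\beta$, it (together with $\overline{u}_{t}-d_{1}\overline{u}_{xx}-\overline{u}(b_{1}(x)-\delta_{1}\overline{u})\ge \delta_{1}\overline{u}^{2}\ge 0$ and the boundary identities $\overline{u}_{x}(t,0)=\overline{u}(t,\overline{h}(t))=0$) holds for \emph{every} $\beta>0$. The only places where $\beta$ (and $\mu$, $u_{0}$) actually intervene are the Stefan-type inequality $\overline{h}'(t)\ge-\mu\overline{u}_{x}(t,\overline{h}(t))$ and the initial ordering $\overline{u}(0,\cdot)\ge u_{0}$.

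Next I would choose $\beta$ and then $\mu_{*}$. Because $0\le x/(1+\alpha/2)\le h_{0}/(1+\alpha/2)<h_{0}$ on $[0,h_{0}]$ and $\phi_{1}>0$ on $[0,h_{0})$, the number $c_{0}:=\min_{x\in[0,h_{0}]}\phi_{1}(x/(1+\alpha/2))$ is strictly positive, so $\beta_{0}:=\max_{x\in[0,h_{0}]}u_{0}(x)/\phi_{1}(x/(1+\alpha/2))\le\|u_{0}\|_{L^{\infty}([0,h_{0}])}/c_{0}$ is finite and yields $u_{0}(x)\le\beta_{0}\phi_{1}(x/(1+\alpha/2))=\overline{u}(0,x)$ on $[0,h_{0}]$; this $\beta_{0}$ depends only on $u_{0}$ and $h_{0}$ (and on $d_{1},b_{1}$ through $\phi_{1},\alpha$). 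The remaining requirement from that same computation, namely $\beta_{0}\le -h_{0}\alpha^{2}/(2\mu\phi_{1}'(h_{0}))$ where $\phi_{1}'(h_{0})<0$ by Hopf's lemma, is equivalent to $\mu\le\mu_{*}$ with
\[
\mu_{*}:=-\,\frac{h_{0}\alpha^{2}}{2\beta_{0}\,\phi_{1}'(h_{0})}>0,
\]
which indeed depends only on $u_{0}$ and $h_{0}$.

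Finally, for any $\mu\le\mu_{*}$ the pair $(\overline{u},\overline{h})$ is, exactly as in the proof of Theorem~\ref{the general case Vanishing} (dropping the competitive term using $v\ge 0$), a supersolution of the free boundary problem \eqref{Eq_ourmodel-two} with $\overline{h}(0)=h_{0}(1+\alpha/2)>h_{0}$ and $\overline{u}(0,\cdot)\ge u_{0}$. The comparison principle then gives $h(t)\le\overline{h}(t)$ for all $t>0$, so $h_{\infty}\le\lim_{t\to\infty}\overline{h}(t)=h_{0}(1+\alpha)<+\infty$, i.e.\ vanishing occurs. I do not expect a genuine obstacle here: the one point requiring care is the bookkeeping that $\alpha$, $\phi_{1}$ and $\mathcal{B}$ are independent of $\mu$ and $u_{0}$, and that the single inequality through which $\mu$ enters the supersolution conditions is monotone in $\mu$, so that enlarging $\beta_{0}$ to dominate a large $u_{0}$ can always be offset by taking $\mu$ correspondingly small.
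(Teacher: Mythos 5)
Your proposal is correct and follows essentially the same route as the paper's own proof: fix the eigenpair $(\lambda_{1},\phi_{1})$ and the exponent $\alpha$ (all independent of $\mu$ and $u_{0}$), enlarge $\beta$ to $\beta_{1}$ so that $\overline{u}(0,\cdot)\geq u_{0}$, and then set $\mu_{*}=-h_{0}\alpha^{2}/(2\beta_{1}\phi_{1}'(h_{0}))$ so the Stefan inequality holds for $\mu\leq\mu_{*}$. Your extra bookkeeping (homogeneity in $\beta$, positivity of $\phi_{1}$ on the compressed interval, $\phi_{1}'(h_{0})<0$ via Hopf) only makes explicit what the paper leaves implicit.
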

\begin{proof}
As in the proof of the Theorem \ref{the general case Vanishing}, let $\lambda_{1}$ and $\phi_{1}$ satisfy equation \eqref{Eq_the auxiliary principal eigenvalue problem}.
 We still define $\overline{u}$, $\overline{h}(t)$ as follows
 $$\overline{u}(t,x)=\beta_{1} e^{-\alpha t}\phi_{1}\left(\frac{xh_{0}}{\overline{h}(t)}\right), ~\text{ for }~t\geq0,~0\leq x\leq \overline{h}(t).$$
$$\overline{h}(t)=h_{0}(1+\alpha-\frac{\alpha}{2}e^{-\alpha t}),~\text{ for }~t\geq0.$$
Here, we also let $\alpha=\min\left\{1, \frac{1}{4}\lambda_{1},\alpha_{1}, \alpha_{2} \right\}$ and choose $\beta_{1}>0$ large enough such that
$$u_{0}(x)\leq\beta_{1}\phi_{1}\left(\frac{x}{1+\frac{\alpha}{2}}\right),~\text{ for }~x\in[0,h_{0}].$$
For this fixed $\beta_{1}$, we choose $$0<\mu\leq-\frac{h_{0}\alpha^{2}}{2\beta_{1}\phi'_{1}(h_{0})}=:\mu_{*}$$
such that
 \begin{eqnarray*}
-\mu\overline{u}_{x}(t,\overline{h}(t))&=&-\beta_{1}\mu e^{-\alpha t}\phi'_{1}(h_{0})\frac{h_{0}}{\overline{h}(t)}\\
&\leq& -\beta_{1}\mu e^{-\alpha t}\phi'_{1}(h_{0})\\
&\leq&\frac{h_{0}\alpha^{2}}{2}e^{-\alpha t}=\overline{h}'(t).
\end{eqnarray*}
Then, we have
\begin{equation}\label{Eq_the vanishing compare about mu}
\left\{\begin{array}{ll}
\displaystyle{\frac{\partial \overline{u}}{\partial t}- d_{1}\overline{u}_{xx}-\overline{u}(b_{1}(x)-\delta_{1}\overline{u})\geq0,}&t>0, ~~ 0<x<\overline{h}(t),\\
\\
\overline{u}_{x}(t,0)=\overline{u}(t,h(t))=0,&t>0,\\
\\
\overline{h}'(t)\geq-\mu\overline{u}_{x}(t,\overline{h}(t)), &t>0,\\
\\
\overline{u}(0,x)=\beta_{1}\phi_{1}\left(\frac{x}{1+\frac{\alpha}{2}}\right)\geq u_{0}(x), &x\in[0,h_{0}],\\
\\
\overline{h}(0)=h_{0}(1+\frac{\alpha}{2})>h_{0}.
\end{array}\right.
\end{equation}
Form the comparison principle, we have
$h(t)\leq\overline{h}(t)$, for $t>0,$ and $$u(t,x)\leq\overline{u}(x,t),\text{ for }t>0\text{ and }x\in[0,h(t)].$$ Thus, $$\displaystyle{h_{\infty}\leq\lim_{t\rightarrow+\infty}\overline{h}(t)=h_{0}(1+\alpha)<+\infty}.$$ This implies that vanishing occurs.

\end{proof}

Next,  we will prove the following conclusions.
\begin{theorem}\label{the-vanishing-about-h}
Assume that $b_{1}(x)$ satisfies \eqref{B4}, where $B(x)$ is replaced by $b_1(x)$. If $h_{\infty}\leq h_{*}$, then the species $u$ vanishes eventually.
\end{theorem}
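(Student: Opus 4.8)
The plan is to discard the (nonpositive) competition term in the $u$–equation, thereby making $u$ a subsolution of a scalar logistic equation posed on a \emph{fixed} bounded interval, and then to exploit the fact that this logistic equation forces every nonnegative solution to zero precisely because of the threshold character of $h_*$. First I would note that $h'(t)>0$ makes $h$ strictly increasing, so $h(t)<h_\infty\le h_*<\infty$ for all finite $t$, the finiteness of $h_*$ being the threshold length at which the principal eigenvalue of \eqref{Eq_the auxiliary principal eigenvalue problem} vanishes (this is legitimate because $b_1$ satisfies \eqref{B4}, so Lemma~\ref{the case about l}(iii) applies with $B=b_1$). In particular the support of $u(t,\cdot)$ stays strictly inside $[0,h_*]$ for all $t$. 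Since $v\ge 0$, the first equation of \eqref{Eq_ourmodel-two} gives $u_t-d_1u_{xx}=u\bigl(b_1(x)-\delta_1(u+v)\bigr)\le u(b_1(x)-\delta_1 u)$ on $0<x<h(t)$, with $u_x(t,0)=0$ and $u(t,h(t))=0$. Extending $u$ by zero on $h(t)\le x\le h_*$ produces a continuous $\widehat u$ that is a generalized subsolution of
\[ w_t=d_1w_{xx}+w(b_1(x)-\delta_1 w),\quad 0<x<h_*,\qquad w_x(t,0)=0,\ w(t,h_*)=0, \]
the only point requiring comment being the kink of $\widehat u$ at $x=h(t)$, where the spatial derivative jumps upward (from $u_x(t,h(t))<0$ to $0$); this is exactly the admissible direction for a subsolution and is the standard device of \cite{dusupandinf,Du2012}.

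Next I would invoke the parabolic comparison principle against the solution $\bar w$ of the displayed logistic problem with initial datum $\bar w(0,\cdot)=\kappa\varphi_1$, where $\varphi_1>0$ is the principal eigenfunction of \eqref{Eq_the auxiliary principal eigenvalue problem} on $(0,h_*)$ and $\kappa$ is large enough that $\kappa\varphi_1\ge\widehat u(0,\cdot)$ on $[0,h_*]$ (possible since $h_0<h_*$ and $u_0$ is bounded). This yields $0\le u(t,x)\le\widehat u(t,x)\le\bar w(t,x)$, and it remains to prove $\bar w(t,\cdot)\to 0$ uniformly on $[0,h_*]$. Since $h_*$ is the value at which $\lambda_1=\lambda_1(h_*)=0$, the function $\kappa\varphi_1$ is a \emph{stationary supersolution} of the logistic problem, because $d_1(\kappa\varphi_1)''+\kappa\varphi_1 b_1-\delta_1\kappa^2\varphi_1^2=-\lambda_1\kappa\varphi_1-\delta_1\kappa^2\varphi_1^2\le 0$; hence the solution issuing from $\kappa\varphi_1$ is nonincreasing in $t$ and converges to a nonnegative steady state $W\le\kappa\varphi_1$. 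If $W\not\equiv 0$ then $W>0$ in $(0,h_*)$ by the strong maximum principle, so $W$ is a positive eigenfunction witnessing that the principal eigenvalue with potential $b_1-\delta_1 W<b_1$ equals $0$; strict monotonicity of the principal eigenvalue in the potential (as in Lemma~\ref{the case about l}) then forces $\lambda_1(h_*)<0$, a contradiction. Therefore $W\equiv 0$, $\bar w(t,\cdot)\downarrow 0$, and consequently $\|u(t,\cdot)\|_{C[0,h(t)]}\le\|\bar w(t,\cdot)\|_{C[0,h_*]}\to 0$, i.e.\ $u$ vanishes.

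The step I expect to be the main obstacle is the long-time behavior of $\bar w$ in the borderline situation $h_\infty=h_*$, where $\lambda_1(h_*)=0$ and one cannot simply read off exponential decay from a linear comparison; the monotone-iteration argument above, combined with strict monotonicity of the principal eigenvalue in the potential, is what handles it (when $h_\infty<h_*$ one may instead work on a slightly larger interval $(0,\tilde h)$ with $h_\infty<\tilde h<h_*$, where $\lambda_1(\tilde h)>0$ gives exponential decay directly). As an alternative route that avoids the scalar logistic comparison entirely, the compactness-plus-Hopf-lemma argument in the first half of the proof of Theorem~\ref{th4.2} applies essentially verbatim: $h_\infty\le h_*<\infty$ already yields $h'(t)\to 0$, which contradicts the inequality $h'(t_{k_i})>0$ extracted along any hypothetical non-vanishing subsequence; this alternative uses only the a~priori bounds of Lemma~\ref{le2.2} and the local estimates of Theorem~\ref{Thm_existence}, and in particular requires nothing about $b_2$ beyond \eqref{B2}.
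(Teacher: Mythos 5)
Your proof is correct, and its first half is essentially the paper's argument: drop the nonpositive competition term, compare $u$ (extended by zero) with a scalar logistic problem on a fixed interval containing $[0,h_\infty]$, and conclude decay. The differences are worth noting. First, the paper defines $h_*$ in its proof as the explicit quantity $\frac{\pi}{2}\sqrt{d_1/\max_{x} b_1(x)}$ and then simply cites Proposition 3.1 of \cite{GUOLAOSHI} for the decay of the logistic solution on any interval of length $l\le h_*$; you instead read $h_*$ as the sharp eigenvalue threshold where $\lambda_1(h_*)=0$ for the potential $b_1(x)$, which is at least as large as the paper's $h_*$, so your version of the statement subsumes theirs. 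Second, because you work at the borderline $\lambda_1=0$, you cannot quote a sub-threshold decay result and instead supply your own proof via the stationary supersolution $\kappa\varphi_1$ (which is a genuine supersolution precisely because $\lambda_1=0$), monotone convergence to a steady state $W$, and strict monotonicity of the principal eigenvalue in the potential to rule out $W\not\equiv 0$; this is a clean, self-contained argument the paper does not need at its smaller $h_*$. Your closing alternative (compactness plus Hopf's lemma, as in Theorem \ref{th4.2}) is also valid and in fact shows the $u$-conclusion holds whenever $h_\infty<\infty$, which is exactly the content of the first half of Theorem \ref{similarly with th4.2}; the paper keeps the present theorem as a separate, more explicit criterion rather than as a corollary of that result. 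No gaps.
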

\begin{proof}
Choose $l\in[h_{\infty},h_{*}]$. Consider the following equation:
\begin{equation}\label{Eq_the vanishing compare about-L}
\left\{\begin{array}{ll}
\displaystyle{\frac{\partial \bar{u}}{\partial t}- d_{1}\bar{u}_{xx}+\bar{u}(b_{1}(x)-\delta_{1}\bar{u})=0,}&t>0,~~  0<x<l,\\
\\
\bar{u}_{x}(t,0)=\bar{u}(t,l)=0,&t>0,\\
\\
\bar{u}(0,x)=u_{0}(x), &x\in[0,h_{0}],\\
\\
\bar{u}(0,x)=0,  &x\in[h_{0},l].
\end{array}\right.
\end{equation}
It follows from the comparison principle that $0\leq u\leq\bar{u}$ for $t>0$ and $x\in(0,l)$. Since $\displaystyle{l\leq\frac{\pi}{2}\sqrt{\frac{d_{1}}{\displaystyle{\max_{x\in[0,+\infty)}b_{1}(x)}}}=:h_{*}}$, Proposition 3.1 of  \cite{GUOLAOSHI} yields that
 $$\displaystyle{\lim_{t\rightarrow+\infty}\|\bar{u}(t,\cdot)\|_{C[0,l]}=0}.$$ Consequently, $\displaystyle{\lim_{t\rightarrow+\infty}\|u(t,\cdot)\|_{C[0,l]}=0}$.
\end{proof}
Under some assumptions, stated below, we can obtain the asymptotic spreading speed from Theorem 3.6 of  \cite{duandguo2011}.
\begin{theorem}\label{the-speed-about-b1}
Assume that $b_{1}(x)$ satisfies \eqref{B4}, where $B(x)$ is replaced by $b_1(x)$. If $h_{\infty}=+\infty$, then $$\limsup_{t\rightarrow+\infty}\frac{h(t)}{t}\leq \beta_{0}(\mu,\max_{x\in[0,+\infty)}b_{1}(x),\delta_{1},d_{1}).$$
Furthermore, if  $b_{1}(x)-\delta_{1}\phi_{v*}$ satisfies  \eqref{B4}, then
$$\liminf_{t\rightarrow+\infty}\frac{h(t)}{t}\geq \beta_{0}(\mu,\min_{x\in[0,+\infty)}(b_{1}(x)-\delta_{1}\phi_{v*}),\delta_{1},d_{1}).$$
\end{theorem}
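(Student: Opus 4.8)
The plan is to sandwich the pair $(u,h)$ between two scalar free boundary problems of logistic type with \emph{constant} birth rates and then invoke the spreading-speed result for such problems (Theorem 4.2 of \cite{duyihongshou}), in complete analogy with the proof of Theorem \ref{thspeed4}. For the upper bound, I would note that since $v>0$ and, by \eqref{B4}, $\bar b_1:=\max_{x\in[0,+\infty)}b_1(x)<+\infty$, the first equation of \eqref{Eq_ourmodel-two} yields $u_t-d_1u_{xx}=u[b_1(x)-\delta_1(u+v)]\le u(\bar b_1-\delta_1 u)$ on $\{t>0,\ 0<x<h(t)\}$, so $(u,h)$ is a subsolution of the one-phase Stefan problem with reaction $\bar u(\bar b_1-\delta_1\bar u)$, the same free boundary law $\bar h'(t)=-\mu\bar u_x(t,\bar h(t))$, and data $(u_0,h_0)$. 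The comparison principle for free boundary problems (cf. \cite{dusupandinf,Du2012}) gives $h(t)\le\bar h(t)$; since $h_\infty=+\infty$ we also have $\bar h_\infty=+\infty$, hence $\bar h(t)/t\to\beta_0(\mu,\bar b_1,\delta_1,d_1)$ by Theorem 4.2 of \cite{duyihongshou}, and therefore $\limsup_{t\to+\infty}h(t)/t\le\beta_0(\mu,\bar b_1,\delta_1,d_1)$.

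For the lower bound I would argue as in the proof of Theorem \ref{the general case spreading}: one has $v\le\overline v$ where $\overline v$ solves \eqref{aaa}, and by Lemma \ref{the long time behavior} (valid since $b_2$ satisfies its hypotheses) $\overline v(t,\cdot)\to\phi_{v*}$ uniformly on compact sets, so for each $\varepsilon>0$ there is $T_\varepsilon>0$ with $v(t,x)\le\phi_{v*}(x)+\varepsilon$ for $t\ge T_\varepsilon$, $x\ge0$. Setting $\underline c:=\min_{x\in[0,+\infty)}(b_1(x)-\delta_1\phi_{v*}(x))$, which is positive \emph{precisely} because $b_1-\delta_1\phi_{v*}$ is assumed to satisfy \eqref{B4}, we obtain $u_t-d_1u_{xx}\ge u(\underline c-\delta_1\varepsilon-\delta_1u)$ for $t>T_\varepsilon$, $0<x<h(t)$. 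Choosing $\varepsilon$ small and, using $h_\infty=+\infty$, enlarging $T_\varepsilon$ so that $h(T_\varepsilon)>\frac{\pi}{2}\sqrt{d_1/(\underline c-\delta_1\varepsilon)}$, the pair $(u,h)$ becomes a supersolution of the scalar free boundary problem with reaction $\underline u(\underline c-\delta_1\varepsilon-\delta_1\underline u)$, free boundary law $\underline h'(t)=-\mu\underline u_x(t,\underline h(t))$, and data $(u(T_\varepsilon,\cdot),h(T_\varepsilon))$ at time $T_\varepsilon$ (note $u(T_\varepsilon,\cdot)>0$ on $[0,h(T_\varepsilon))$ by Lemma \ref{le2.2}). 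Comparison gives $h(t)\ge\underline h(t)$; since $h(T_\varepsilon)$ exceeds the critical length, $\underline h_\infty=+\infty$ (by the argument of Theorem \ref{th4.3}, cf. \cite{Du2012}); and Theorem 4.2 of \cite{duyihongshou} gives $\underline h(t)/t\to\beta_0(\mu,\underline c-\delta_1\varepsilon,\delta_1,d_1)$. Hence $\liminf_{t\to+\infty}h(t)/t\ge\beta_0(\mu,\underline c-\delta_1\varepsilon,\delta_1,d_1)$, and letting $\varepsilon\to0^+$ yields $\liminf_{t\to+\infty}h(t)/t\ge\beta_0(\mu,\underline c,\delta_1,d_1)$.

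The one genuinely new point, compared to Theorem \ref{thspeed4}, is the passage $\varepsilon\to0$ in the last step: it requires that the map $a\mapsto\beta_0(\mu,a,\delta_1,d_1)$ be continuous (in fact it should be shown nondecreasing), which I would obtain from Proposition \ref{pr4.1} by combining the continuous dependence of the semi-wave profile $U_\beta$ on the parameters with the defining identity $\mu U_\beta'(0)=\beta$. I expect this to be the main (though modest) obstacle; everything else — the two comparison arguments and the spreading of the lower scalar problem once $h(T_\varepsilon)$ surpasses the critical length $\frac{\pi}{2}\sqrt{d_1/(\underline c-\delta_1\varepsilon)}$ — is routine and entirely parallel to the reasoning in Theorems \ref{th4.3} and \ref{thspeed4} and the classical scalar theory in \cite{Du2012,dusupandinf,duyihongshou}.
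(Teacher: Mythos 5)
Your overall strategy is the intended one: the paper gives no written proof of this theorem (it simply defers to Theorem 3.6 of \cite{duandguo2011}), and the natural reconstruction is exactly the sandwich argument you describe, parallel to Theorem \ref{thspeed4}. Your upper bound is complete: dropping $v\ge 0$ and replacing $b_1(x)$ by $\sup b_1$ makes $(u,h)$ a subsolution of the constant-coefficient logistic Stefan problem, and Theorem 4.2 of \cite{duyihongshou} finishes it. Your observation that the final step needs monotonicity/continuity of $a\mapsto\beta_0(\mu,a,\delta_1,d_1)$ is also correct, and this is indeed available from the construction in Proposition \ref{pr4.1} (the paper uses the same implicit limit in Theorem \ref{thspeed4}).

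There is, however, a genuine gap in your lower bound, at the step ``for each $\varepsilon>0$ there is $T_\varepsilon$ with $v(t,x)\le\phi_{v*}(x)+\varepsilon$ for $t\ge T_\varepsilon$ and \emph{all} $x\ge0$.'' Lemma \ref{the long time behavior} only gives $\overline v(t,\cdot)\to\phi_{v*}$ \emph{uniformly on compact subsets} of $[0,\infty)$; the time $T$ after which $\overline v\le\phi_{v*}+\varepsilon$ on $[0,L]$ may grow with $L$, and nothing in the stated lemmas rules that out (e.g.\ if $v_0$ decays at infinity, $v$ near $x=L$ takes longer and longer to climb toward, or descend to, $\phi_{v*}$). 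Your comparison for the lower scalar problem requires the differential inequality $u_t-d_1u_{xx}\ge u(\underline c-\delta_1\varepsilon-\delta_1 u)$ on the set $\{T_\varepsilon<t,\ 0<x<\underline h(t)\}$, and $\underline h(t)$ grows linearly, so the $v$-estimate is needed on an unbounded, expanding region — precisely where locally uniform convergence does not reach. This is the one place where the heterogeneous case genuinely differs from Theorem \ref{thspeed4}: there, the bound $\limsup_{t\to\infty}v\le\kappa_2$ is obtained from a spatially independent ODE supersolution and is therefore automatically uniform in $x$. To close the gap you must either (i) prove that under \eqref{B4} (and suitable conditions on $v_0$, e.g.\ $\liminf_{x\to\infty}v_0(x)>0$) the convergence $\overline v\to\phi_{v*}$ is uniform on $[0,\infty)$, or (ii) replace $\phi_{v*}+\varepsilon$ by the globally valid but cruder bound $\sup b_2/\delta_2+\varepsilon$ coming from the ODE comparison — which, however, yields a weaker constant than $\min_x\bigl(b_1(x)-\delta_1\phi_{v*}(x)\bigr)$ and so does not prove the theorem as stated. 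The paper itself glosses over this point, but as written your argument does not establish the claimed lower bound.
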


%%%%%%%%%%%%%%%%%%%%%%%%%%%%%%%%%%%%

%%%%%%%%%%%%%%%%%%%%%%%%%%%%%%%%%%%%
\section{Summary and conclusions}%7
  We studied a reaction-diffusion  model with a free boundary in one-dimensional environment. The model is developed to better understand the dynamics of {\it Wolbachia} infection under the assumptions supported by recent experiments such as perfect maternal transmission and complete CI.

 In the special case of constant birth rates, we only considered the fitness benefit case. For the fitness benefit case, where the environment is more favorable for infected mosquitoes, our results show that the spreading of {\it Wolbachia} infection occurs if either the size  of the initial habitat of infected population $h_0$ is large enough, say $h_0\geq h_0^*$ (Theorem \ref{th4.3}), or the boundary moving coefficient $\mu$ is sufficiently large ($\mu\geq\bar{\mu}$) in case of $h_0<h_0^*$ (Theorem \ref{th4.4}). A rough estimate on the spreading speed of $h(t)$ is also provided. Moreover, if $h_0<\frac{\pi}{2}\sqrt{\frac{d_{1}}{b_{1}}}<h_0^*$ and $\mu\leq \underline{\mu}$, then the infection cannot spread and $h_{\infty}<+\infty$.

The case of inhomogeneous (spatially dependent) birth rates is treated in Section \ref{Section4}. Detailed criteria for spreading and vanishing are derived in Subsection \ref{inhom_criteria} with the aid of spectral properties of relevant eigenvalue problems.

\section*{Acknowledgements}
 Y. Liu and Z. Guo were supported by National Science Foundation of China (No. 11371107, 11771104), Program for Chang Jiang Scholars and Innovative Research Team in University (IRT-16R16). Y. Liu was supported by the National Natural Science Foundation of China under Grant No.11271093 and the Innovation Research for the Postgraduates of Guangzhou University under Grant No.2017GDJC-D05.

  M. El Smaily and L. Wang acknowledge partial support received through NSERC-Discovery grants from the Natural Sciences and Engineering Research Council of Canada (NSERC).

\end{document}